\newtheorem{theorem}{Theorem}[section]
\newtheorem{corollary}[theorem]{Corollary}
\newtheorem{definition}[theorem]{Definition}
\newtheorem{example}[theorem]{Example}
\newtheorem{lemma}[theorem]{Lemma}
\newtheorem{question}[theorem]{Question}
\newtheorem{remark}[theorem]{Remark}
\renewenvironment{proof}[1][Proof]{\noindent\textbf{#1.} }{\ \rule{0.5em}{0.5em}}
\begin{document}
\title[Minkowski norm and Hessian isometry induced by an isoparametric foliation]{Minkowski norm and Hessian isometry induced by an isoparametric foliation on the unit sphere}
\author{Ming Xu}

\address{Ming Xu \newline
School of Mathematical Sciences,
Capital Normal University,
Beijing 100048,
P. R. China}
\email{mgmgmgxu@163.com}

\date{}

\begin{abstract}
Let $M_t$ be an isoparametric foliation on the unit sphere $(S^{n-1}(1),g^{\mathrm{st}})$ with $d$ principal curvatures. Using the spherical coordinates
induced by $M_t$, we construct a Minkowski norm with the presentation $F=r\sqrt{2f(t)}$, which generalizes the notions of $(\alpha,\beta)$-norm and $(\alpha_1,\alpha_2)$-norm. Using the technique of spherical local frame, we give
an exact and explicit answer for the question when $F=r\sqrt{2f(t)}$
really defines a Minkowski norm. Using the similar technique, we study the Hessian isometry $\Phi$ between two Minkowski norms induced by $M_t$, which preserves
the orientation and fixes the spherical $\xi$-coordinates. There are
two ways to describe this $\Phi$, either by a system of ODEs,
or by its restriction to
any normal plane for $M_t$, which is then reduced to a Hessian isometry between Minkowski norms on $\mathbb{R}^2$ satisfying certain symmetry and (d)-properties. When $d>2$, we prove this $\Phi$ can be obtained by gluing positive scalar multiplications and compositions between the Legendre transformation and positive scalar multiplications, so it must satisfy the (d)-property for any orthogonal decomposition $\mathbb{R}^n=\mathbf{V}'+\mathbf{V}''$,
i.e.,
for any nonzero $x=x'+x''$ and $\Phi(x)=\overline{x}=\overline{x}'+\overline{x}''$,
with $x',\overline{x}'\in\mathbf{V}'$ and $x'',\overline{x}''\in\mathbf{V}''$,
we have $g_x^{F_1}(x'',x)=g_{\overline{x}}^{F_2}(\overline{x}'',\overline{x})
$.
As byproducts, we prove the following results. On the indicatrix $(S_F,g)$, where $F$ is a Minkowski norm induced by $M_t$ and $g$ is the Hessian metric, the foliation $N_t=S_F\cap \mathbb{R}_{>0}M_0$ is isoparametric. Laugwitz Conjecture is valid for a Minkowski norm $F$ induced by $M_t$, i.e., if its Hessian metric $g$ is flat on $\mathbb{R}^n\backslash\{0\}$ with $n>2$, then $F$ is Euclidean.

\textbf{Mathematics Subject Classification (2010)}:
52A20, 53C21, 53C40

\textbf{Key words}: Minkowski norm, Hessian isometry, Hessian metric, isoparametric foliation, Laugwitz Conjecture, Legendre transformation
\end{abstract}\maketitle

\section{Introduction}

The classification of {\it isoparametric foliations} on the unit sphere $(S^{n-1}(1),g^{\mathrm{st}})$ (if not otherwise specified, we will
always assume $n>2$)
has been one of the most important geometric problems \cite{Ya1982}, with a history of eighty years since the time of E. Cartan \cite{Ca1938,Ca1939}. There were many remarkable progress \cite{Ch2011,Ch2013,CCJ2007,DN1985,HL1971,Mi2013,St1999}, and recently it was
completely solved by Q.S. Chi \cite{Ch2020}. Meanwhile, researchers are eager to find applications and generalizations of this theory in geometry and topology. For example, its applications in Riemannian
geometry and differential topology are concerned in \cite{GT2012,MO2014,QT2015,TXY2014,TY2013}.
Its generalization, the equifocal hypersurface, is studied in \cite{GQ2014,Ta1998,TT1995}. Its generalization to
Finsler geometry is studied in \cite{HDY-preprint,HYS2016,HYS2017,Xu2018,XMYZ2020}.
More references can be found in the survey papers \cite{Ge2019,QT2014,Th2000,TY2018}.

In this paper, we consider how to generalize and apply the isoparametric foliation on the unit spheres to the Hessian geometry \cite{Sh2007}
for Minkowski norms. This work is inspired by the
recent cowork \cite{XM2020} with V. Matveev, which implies
the interesting connections to the study of Laugwitz conjecture \cite{La1965}
in convex geometry \cite{Sc2013}
and Landsberg Unicorn Conjecture \cite{Ma1996,Sh2009}
 in Finsler geometry \cite{BCS2000}.

In this paper, we only consider smooth and strongly convex Minkowski norms on finite dimensional real vector spaces \cite{BCS2000}. For example, a {\it Minkowski norm} on $\mathbb{R}^n$ with $n\geq2$
is a continuous function $F:\mathbb{R}^n\rightarrow\mathbb{R}_{\geq0}$ which is
positive and smooth on $\mathbb{R}^n\backslash\{0\}$, and satisfies
the positive 1-homogeneity and the strong convexity (see \cite{BCS2000} or Section \ref{subsection-3-1}). Then the Hessian of $E=\tfrac12F^2$, is positive
definite at each nonzero $x$, which defines a Riemannian metric $g={\rm d}^2E$ on $\mathbb{R}^n\backslash\{0\}$. For simplicity, we call it the {\it Hessian metric} of $F$.

Since the Minkowski norm $F$ is one-to-one
determined by its {\it indicatrix} $S_F=\{x\in\mathbb{R}^n| F(x)=1\}$.
The geometric properties of the Hessian metric $g$ or its restriction to $S_F$ help us
understand the convexity of the domain enclosed by $S_F$. See \cite{Sc2013,Sh2007}
for more discussion on the relation between Hessian geometry and Convex geometry.

Notice that this is only one important
model in more general Hessian geometry. Hessian geometers
have many other sources for the function $E$ to construct the
metric \cite{GD1979,La1965,LSZ1993,Sh2013}, toric K\"{a}hler geometry,  infinite  dimensional integrable system of hydrodynamic type, affine geometry of hypersurfaces, information geometry, etc..
 More involved discussion for Hessian geometry can be found in \cite{Sh2007} and references therein.

Now we come back to an isoparametric foliation $M_t$ on the unit sphere $(S^{n-1}(1),g^{\mathrm{st}})\subset \mathbb{R}^n$. Here we parametrize $M_t$ such that $t=\mathrm{dist}_{S^{n-1}(1)}(M_t,M_0)\in[0,\tfrac{\pi}d]$, where $M_0$ and $M_{\pi/d}$ are the two focal submanifolds, and $d\in\{1,2,3,4,6\}$ is the number of principal curvatures for each $M_t$ with $t\in(0,\tfrac{\pi}d)$ in $(S^{n-1}(1),g^{\mathrm{st}})$ \cite{Mu1980}. Associated with
$M_t$, we can define the (generalized)  {\it spherical coordinates} $(r,t,\xi)\in\mathbb{R}_{>0}\times(0,\tfrac{\pi}d)\times M_{\pi/2d}$,
i.e., $x=(r,t,\xi)$ when $|x|=r$, $x/|x|\in M_t$ and there exists a normal geodesic segment in $(S^{n-1}(1),g^{\mathrm{st}})$ for this foliation, which connects $x/|x|$ to $\xi$ without passing the focal submanifolds. Further more, we introduce
{\it spherical local frame} induced by $M_t$ (see Section \ref{subsection-2-5}), with which the standard flat metric $g^{\mathrm{st}}$ on $\mathbb{R}^n\backslash\{0\}$ and its Levi-Civita connection can be explicitly calculated.

We can use the foliation $M_t$ to define a
Minkowski norm $F$ on $\mathbb{R}^n$, such that the restriction of $F$ to each $M_t$ is a constant function. We will simply call it a {\it Minkowski norm induced by $M_t$}. When $d=1$ or $2$,
the induced $F$ admits a linear $SO(n-1)$- or $O(k)\times O(n-k)$-invariancy, and is called an $(\alpha,\beta)$-norm or $(\alpha_1,\alpha_2)$-norm in some literature \cite{CS2005,DX2016}. These norms have attracted many attentions of Finsler geometers
\cite{HM2019,Ma1992}. However, the induced Minkowski norms when $d>2$
have been rarely studied.

Using the spherical $r$- and $t$-coordinates, the induced Minkowski norm $F$ can be presented as
$F=r\sqrt{2f(t)}$.
A natural and important question is the following:
\begin{question}\label{question-1}
When does $F=r\sqrt{2f(t)}$ define a Minkowski norm induced by $M_t$?
\end{question}

Notice that, besides the issue of strong convexity, the smoothness of $F=r\sqrt{2f(t)}$ at $\mathbb{R}_{>0}M_0$ and $\mathbb{R}_{>0}M_{\pi/d}$ is
also subtle and crucial. We use the spherical local frame to calculate
the Hessian of $E=\tfrac12F^2=r^2f(t)$ as in \cite{XM2020}, and then
completely answer Question \ref{question-1} by the following theorem.
\medskip

\noindent{\bf Theorem A.}\ \ {\it The spherical coordinates presentation
$F=r\sqrt{2f(t)}$ defines a Minkowski norm induced by $M_t$ if and only if $f(t)$ can be extended to a positive smooth $D_{2d}$-invariant
function on $\mathbb{R}/(2\mathbb{Z}\pi)$ which satisfies
\begin{equation*}
2f(t)\tfrac{{\rm d}^2}{{\rm d}t^2}f(t)-\left(\tfrac{{\rm d}}{{\rm d}t}f(t)\right)^2+4f(t)^2>0
\end{equation*}
everywhere, i.e., the polar coordinates presentation $\overline{F}=r\sqrt{2f(t)}$ defines a $D_{2d}$-invariant Minkowski norm on $\mathbb{R}^2$.}\medskip

Here $\mathbb{R}^2$ can be identified with any normal plane $\mathbf{V}$ for $M_t$ (i.e., $\mathbf{V}\cap S^{n-1}(1)$ is
a normal geodesic for $M_t$), and
$D_{2d}$ is the group $\mathbb{Z}_2$ when $d=1$ and the dihedral group when $d>1$, which can be interpreted as a Weyl group. See Section \ref{subsection-2-4} for its explicit description and its action on $\mathbb{R}^2$ or $\mathbb{R}/(2\mathbb{Z}\pi)$.

Theorem A is a reformulation of Theorem \ref{main-thm-1}. Its
direct corollaries, Corollary \ref{cor-1} and Corollary \ref{cor-2}, where we take $d=1$ and $2$, reprove some known results  for Minkowski norms of $(\alpha,\beta)$-
and $(\alpha_1,\alpha_2)$-types  \cite{Cr2020,DX2016}.

Let $F=r\sqrt{2f(t)}$ be a Minkowski norm induced by $M_t$. Then
on its indicatrix $S_F$, there is a foliation $N_t=S_F\cap\mathbb{R}_{>0}M_t$ induced by $M_t$. Using the technique of spherical local frame again, we prove the following theorem (see Theorem \ref{main-thm-2}).
\medskip

\noindent
{\bf Theorem B.}\ \ {\it Let $F$ be a Minkowski norm induced by the isoparametric foliation $M_t$
on $(S^{n-1}(1),g^{\mathrm{st}})$ and $g$ its Hessian metric. Then the foliation $N_t=S_F\cap\mathbb{R}_{>0}M_t$ on $(S_F,g)$ is isoparametric.}
\medskip

Theorem B provides more examples of isoparametric foliations. Indeed, when $M_t$ is homogeneous, i.e., it is induced by the isometric cohomogeneity one action of some compact connected Lie group $G$ (see \cite{HL1971,St1996,TT1972} for its classification), the isometric $G$-action on $(S_F,g)$ is
also of cohomogeneity one. So the $G$-orbits $N_t$ provide an isoparametric foliation on $(S_F,g)$. Though this shortcut to Theorem B is not valid for inhomogeneous $M_t$ of OT-FKM type \cite{FKM1981,OT1975}, it provides the most crucial hint, and it inspires us to more generally study the Hessian isometries between Minkowski norms. It is also remarkable that similar correspondence has been found for isoparametric foliations on smooth homotopy spheres (see Theorem 1.1 in \cite{Ge2016}), where topology rather than geometry or Lie theory plays the main role.

Let $F_1$ and $F_2$ be two Minkowski norms on $\mathbb{R}^n$ with $n\geq2$, and $g_1$ and $g_2$ their Hessian metrics
respectively. Then a {\it Hessian
isometry} $\Phi$ from $F_1$ to $F_2$ is a diffeomorphism on $\mathbb{R}^n\backslash\{0\}$ which is an isometry from $g_1$ to
$g_2$. See Section \ref{subsection-4-1} for its basic properties and local version.
A linear isomorphism $\Phi$ on $\mathbb{R}^n$ satisfying $F_1=F_2\circ\Phi$
naturally induces a Hessian isometry when restricted to
$\mathbb{R}^n\backslash\{0\}$. We call it a {\it linear isometry} from $F_1$ to $F_2$.

As we have seen, linear isometry provides us the hint and shortcut to Theorem B. Besides, it also helps us prove a special case of Laugwitz Conjecture \cite{La1965}, which improves Corollary 1.7 in \cite{XM2020} (see Theorem \ref{main-thm-3}).\medskip

\noindent
{\bf Theorem C.}\ \ {\it Let $F$ be a Minkowski norm on $\mathbb{R}^n$ with $n>2$ induced by the isoparametric foliation $M_t$ on
$(S^{n-1}(1),g^{\mathrm{st}})$. Suppose its Hessian metric $g$ is
flat on $\mathbb{R}^n\backslash\{0\}$, then $F$ is Euclidean.}
\medskip

The (possibly) nonlinear Hessian isometry between
two Minkowski norms induced by $M_t$ is more intriguing.
Generally speaking, its complete classification
is a hard problem which involves complicated case-by-case discussion. In this paper, we only concentrate in a subclass, i.e., we consider the triple $(F_1,F_2,\Phi)$, in which
$F_1$ and $F_2$ are Minkowski norms induced by $M_t$, and the Hessian isometry $\Phi$ from $F_1$ to $F_2$ preserves the orientation
and fixes the spherical $\xi$-coordinates. There are two ways to describe this triple.

We may start with the spherical coordinates presentations for $(F_1,F_2,\Phi)$, i.e., $F_1=r\sqrt{2f(t)}$, $F_2=r\sqrt{2h(\theta)}$ (we use $\theta$ to denote the spherical $t$-coordinate for $F_2$), and $\Phi:(r,t,\xi)\mapsto (\tfrac{rf(t)^{1/2}}{h(\theta(t))^{1/2}},\theta(t),\xi)$,
we find that $(f(t),h(\theta),\theta(t))$ must satisfy the $D_{2d}$-symmetry and the following ODE system,
\begin{eqnarray}
& &\tfrac{1}{2f(t)}\tfrac{{\rm d}^2}{{\rm d}t^2}f(t)-\tfrac{1}{4f(t)^2}\left(\tfrac{{\rm d}}{{\rm d}t}f(t)\right)^2+1\nonumber\\
&=&\left(\tfrac{{\rm d}}{{\rm d}t}\theta(t)\right)^2\
\left(\tfrac{1}{2h(\theta(t))}\tfrac{{\rm d}^2}{{\rm d}\theta^2}h(\theta(t))-\tfrac{1}{4h(\theta(t))^2}\left(\tfrac{{\rm d}}{{\rm d}\theta}h(\theta(t))\right)^2+1\right),\quad\mbox{and}
\label{1001}\\
   & &\sin^2 (t+\tfrac{k\pi}{d})+\tfrac{\cos(t+\tfrac{k\pi}{d})
\sin(t+\tfrac{k\pi}{d})}{2f(t)}\tfrac{{\rm d}}{{\rm d}t}f(t)\nonumber\\
&=&\sin^2(\theta(t)+\tfrac{k\pi}{d})+
\tfrac{\cos(\theta(t)+\tfrac{k\pi}{d})
\sin(\theta(t)+\tfrac{k\pi}{d})}{2h(\theta(t))}
\tfrac{{\rm d}}{{\rm d}\theta}h(\theta(t))
\label{1002}
\end{eqnarray}
for each $k\in\{0,\cdots,d-1\}$.

Alternatively, we may restrict $(F_1,F_2,\Phi)$ to any normal plane $\mathbf{V}$. With $\mathbf{V}$ identified with
$\mathbb{R}^2$ (see Section \ref{subsection-2-4}), we get a triple
$(\overline{F}_1,\overline{F}_2,\overline{\Phi})$ with $D_{2d}$-symmetry, where both $\overline{F}_i$ are Minkowski norms on $\mathbb{R}^2$, and $\overline{\Phi}$ is a Hessian isometry between $\overline{F}_i$. In particular, the ODE (\ref{1002}) can be interpreted as a {\it (d)-property}, defined by
the equality $g_x^{F_1}(x'',x)=g_{\overline{x}}^{F_2}(\overline{x}'',\overline{x})$
for any nonzero $x=x'+x''$ and $\Phi(x)=\overline{x}=\overline{x}'+\overline{x}''$ with respect to
a given orthogonal decomposition $\mathbb{R}^n=\mathbf{V}'+\mathbf{V}''$.
See Definition \ref{defining-(d)-property} in Section \ref{subsection-5-3} and its local version in Section \ref{subsection-6-3}.

Summarizing Theorem \ref{main-thm-4} and Theorem \ref{main-thm-5}, we get the following
complete description for Hessian isometries between two Minkowski norms induced by $M_t$, which preserve the orientation and fix the $\xi$-coordinates.\medskip

\noindent
{\bf Theorem D.} \ \ {\it Let $M_t$ be any isoparametric foliation on
$(S^{n-1}(1),g^{\mathrm{st}})$ with $d$ principal curvatures.
Then there are one-to-one correspondences between any two of the following three sets:
\begin{enumerate}
\item The set of all triples $(F_1,F_2,\Phi)$, in which both $F_i$ are
Minkowski norms induced by $M_t$, and $\Phi$ is a Hessian isometry from $F_1$ to $F_2$ which preserves the orientation and
fixes the spherical $\xi$-coordinates;
\item The set of all triples $(f(t),h(\theta),\theta(t))$, such that $f(t)$ and $h(\theta)$ are $D_{2d}$-invariant positive smooth functions
    on $\mathbb{R}/(2\mathbb{Z}\pi)$ satisfying the requirement in
    Theorem A, $\theta(t)$ is a $D_{2d}$-equivariant orientation preserving diffeomorphism on $\mathbb{R}/(2\mathbb{Z}\pi)$ fixing each point in $\tfrac{\mathbb{Z}\pi}d$, and the triple is a solution of the ODE system for all $t\in\mathbb{R}/(2\mathbb{Z}\pi)$, which consists of  (\ref{1001}) and (\ref{1002}) for all $k\in\{0,\cdots,d-1\}$;
\item The set of all triples $(\overline{F}_1,\overline{F}_2,\overline{\Phi})$, in which both $\overline{F}_i$ are $D_{2d}$-invariant Minkowski norms on
    $\mathbb{R}^2$, and $\overline{\Phi}$ is a $D_{2d}$-equivariant orientation preserving Hessian isometry from $\overline{F}_1$ to
    $\overline{F}_2$ which satisfies the (d)-property with respect to the decomposition $$\mathbb{R}^2=\mathbf{V}'+\mathbf{V}''=
\mathbb{R}(\cos(-\tfrac{k\pi}d),\sin(-\tfrac{k\pi}d))
+\mathbb{R}(\cos(\tfrac{\pi}2-\tfrac{k\pi}d),\sin(\tfrac{\pi}2-
\tfrac{k\pi}{d})))$$
for each $k\in\{0,\cdots,d-1\}$.
\end{enumerate}
The correspondences from (1) and (3) to (2) are provided by the spherical and polar coordinates presentations respectively. The correspondence between (1) and (3) is provided by the restriction
to any normal plane $\mathbf{V}$ for $M_t$ and an identification
between $\mathbf{V}$ and $\mathbb{R}^2$.}
\medskip

Finally, we consider the construction for the Hessian isometry $\Phi$
in Theorem D.

Legendre transformation (or its composition with a positive scalar multiplication) provides an important class of (possibly) nonlinear Hessian isometries \cite{Sc2013}. Notice that in this paper we have
used the standard
inner product to identify $\mathbb{R}^n$ with its dual. So for
a Minkowski norm $F$ induced by $M_t$, its dual is also a Minkowski norm on $\mathbb{R}^n$ induced by $M_t$, and its Legendre transformation
preserves the orientation and fixes the spherical $\xi$-coordinates. Theorem D (or Theorem \ref{main-thm-5}) provides the one-to-one correspondence between Legendre transformations for Minkowski norms induced by $M_t$ and Legendre transformations for $D_{2d}$-invariant Minkowski norms on $\mathbb{R}^2$. See Lemma \ref{lemma-10} and Theorem \ref{main-thm-7} for the precise statements.

More examples for the Hessian isometry $\Phi$ in Theorem D  can be constructed by gluing positive scalar multiplications and the compositions of Legendre transformations and
positive scalar multiplications (see Remark \ref{remark-7-3}).

On the other hand, when we have $d>2$ for the foliation $M_t$, this
gluing construction can exhaust all the wanted $\Phi$.
The method for discussing the ODE system consisting of (\ref{1001}) and (\ref{1002}) with $k=0$ (which corresponds to the (d)-property with $k=0$ in (3) of Theorem D) has been given
in \cite{XM2020}, which enables us to locally determine the triple $(f(t),h(\theta),\theta(t))$ around a generic $t_0\in(0,\tfrac{\pi}d)$.
By the assumption $d>2$, (3) in Theorem D requires essentially more (d)-properties for the triple $(\overline{F}_1,\overline{F}_2,\overline{\Phi})$
(for example, the one with $k=1$). Applying Lemma \ref{lemma-12} accordingly, we prove the following theorem verifying our observation (see Theorem \ref{main-thm-6} for the more precise statement).\medskip

\noindent
{\bf Theorem E.}\ \ {\it Any Hessian isometry between two Minkowski norms induced by an isoparametric foliation on
$(S^{n-1}(1),g^{\mathrm{st}})$ with $d>2$, which preserves the orientation and fixes the spherical $\xi$-coordinates can be constructed by gluing positive scalar multiplications and compositions between the Legendre transformation of $F_1$ and positive scalar multiplications. In particular, it satisfies the (d)-property for any orthogonal decomposition of $\mathbb{R}^n$.}
\medskip

When $d=1$ or $d=2$, Theorem 1.4 and Theorem 1.5 in \cite{XM2020} provide a similar local description for $\Phi$.

At the end, we remark that most results in this paper for Hessian isometries are also valid for local Hessian isometries. To avoid iterance and complexity of terminology and notations, we skip those details.

This paper is organized as following. In Section 2, we introduce the spherical coordinates and spherical local frame induced by an isoparametric foliation $M_t$ on the unit sphere. In Section 3, we introduce the Minkowski norm induced by $M_t$, and prove Theorem A and Theorem B. In Section 4, we introduce the notion of Hessian isometry and prove Theorem C. In Section 5, we study Hessian isometries between two Minkowski norms induced by $M_t$ and prove Theorem D for those which preserve  the orientation and fix  the spherical $\xi$-coordinates. In Section 6, we discuss  Legendre transformation and (d)-property. In Section 7, we use the ODE method and (d)-property to
provide the local description for the Hessian isometry $\Phi$ in Theorem D when $d>2$, and prove Theorem E.
\section{Spherical coordinates and spherical local frame induced by
an isoparametric foliation on the unit sphere}

\subsection{Isoparametric function and isoparametric foliation}
An {\it isoparametric function} on a Riemannian manifold $(M,{g})$ is a smooth function
$p:M\rightarrow\mathbb{R}$ such that it is regular almost everywhere, and its gradient vector field $\mathrm{grad}\ p$ and its Laplacian $\Delta p$ satisfy
$$g(\mathrm{grad}\ p,\mathrm{grad}\ p)=a\circ p\quad\mbox{and}\quad
\Delta p=b\circ p$$ for some one-variable functions $a(s)$ and $b(s)$. For each regular value $s$ of $p$, its pre-image $M_s=p^{-1}(s)$ is called an {\it isoparametric hypersurface} \cite{CR2015}.
We will also use $M_s$ to denote the {\it isoparametric foliation} (i.e., the set of all non-empty $M_s$). A geodesic is called {\it normal} for (the foliation) $M_s$, if it intersects each $M_s$ orthogonally.

The isoparametric foliation is called {\it homogeneous} if there exists
a Lie group $G$ of isometric actions on $(M,{g})$ such that each $M_s$
is a $G$-orbit, i.e., this isoparametric foliation is induced by the cohomogeneity-one isometric action of $G$. Indeed, any cohomogeneity-one isometric action can locally induce an isoparametric foliation.
\subsection{Isoparametric foliation on a unit sphere}
On an Euclidean space $\mathbb{R}^n$ with $n\geq 2$, we have the standard Euclidean inner product $\langle\cdot,\cdot\rangle$, the standard Euclidean norm $|\cdot|=\langle\cdot,\cdot\rangle^{1/2}$  and the orthonormal linear coordinates $x=(x_1,\cdots,x_n)$. Meanwhile, we have the standard
flat metric on $\mathbb{R}^n$,
${g^{\mathrm{st}}}={\rm d}x_1^2+\cdots+{\rm d}x_{n}^2$. We will also use $g^{\mathrm{st}}$ to denote its restrictions to submanifolds.

Any isoparametric foliation on $(S^{n-1}(1),{g^{\mathrm{st}}})$
can be related to an
isoparametric function $p: S^{n-1}(1)\rightarrow[-1,1]$, which is the restriction of a homogeneous polynomial of degree $d\in\{1,2,3,4,6\}$ on $\mathbb{R}^n$, where $d$ is the number of principal curvatures. Further more, $\pm1$ are the only critical values of $p(\cdot)$. In this foliation,  each $M_s=p^{-1}(s)$ with $-1<s<1$ is a closed connected isoparametric hypersurface, and the two critical sets
$M_{\pm1}$ are the two {\it focal submanifolds} \cite{Mu1980}.

There are only two subclasses of isoparametric foliations on the unit spheres \cite{Ch2020}. One subclass are those homogeneous ones, which were classified in \cite{HL1971,TT1972}. The other subclass are of the OT-FKM type \cite{FKM1981,OT1975}. Notice that the OT-FKM type must have $d=4$, and there is some overlap between
the subclasses.

%The degree $d$ is an important index characterizing the isoparametric foliation. Each $M_s$ with $-1<s<1$ has exactly $d$ distinct principal
%curvature values \cite{Mu1980}. So we may simply say $M_s$ is an isoparametric foliation {\it with $d$ principal curvatures}.

Consider any maximal extended normal geodesic $\gamma\subset (S^{n-1}(1),g^{\mathrm{st}})$ for  $M_s$. It is a great circle, i.e., the intersection between a plane $\mathbf{V}$ passing the origin and $S^{n-1}(1)$. We will simply call this $\mathbf{V}$ a {\it normal plane} for (the foliation) $M_s$, because it coincides with the orthogonal normal complement of $T_xM_s$ in $\mathbb{R}^n=T_x(\mathbb{R}^n\backslash\{0\})$ for $x\in\gamma\cap M_s$.
The intersection $\gamma\cap(M_{-1}\cup M_1)$ is the set of a pair of antipodal points when $d=1$, or the vertex set of a regular $2d$-polygon when $d>1$. The points in $\gamma\cap M_{-1}$ and in $\gamma\cap M_{1}$ appear alternatively along $\gamma$. Denote
$\mathrm{dist}_{S^{n-1}(1)}(\cdot,\cdot)$ and $\mathrm{dist}_{\gamma}(\cdot,\cdot)$ the distance functions on
$(S^{n-1}(1),{g^{\mathrm{st}}})$ or $(\gamma,{g^{\mathrm{st}}})$ respectively. Then we have
$$\mathrm{dist}_{S^{n-1}(1)}(M_{-1},M_1)=\mathrm{dist}_{\gamma}(\gamma\cap M_{-1}, \gamma\cap M_1)=\tfrac{\pi}{d}.$$
For any $s\in(-1,1)$, we have $c=\mathrm{dist}_{S^{n-1}(1)}(M_{-1},M_s)\in(0,\tfrac{\pi}{d})$, and
$$\gamma\cap M_s=\{x\in\gamma| \mathrm{dist}_{\gamma}(x,\gamma\cap M_{-1})=c\}$$
contains $2d$ points. The principal curvatures of $M_s$, with respect to  the normal direction represented by $\mathrm{grad}\ p$, are exactly $\cot(c+\tfrac{k\pi}{d})$, $k=0,\cdots,d-1$. The multiplicities of these principal curvatures are crucial for the classification theory, which has been extensively studied \cite{Ab1983,Fa2017,St1999,Ta1991}.
\label{subsection-2-2}
\subsection{Parametrization for an isoparametric foliation
on the unit sphere}
In later discussion, we will always parametrize an isoparametric
foliation on $(S^{n-1}(1),g^{\mathrm{st}})$ with $d$ principal curvatures
as $M_t$ with $t\in[0,\tfrac{\pi}d]$, so that for any $x\in M_t$ we have $$\mathrm{dist}_{S^{n-1}(1)}(x,M_0)=
\mathrm{dist}_{S^{n-1}(1)}(M_t,M_0)=t.$$
By this parametrization, $M_0$ and $M_{{\pi}/d}$ are the two focal submanifolds, and all other $M_t$ are isoparametric hypersurfaces.
Restricted to each normal geodesic segement realizing the distance
from $M_0$ to $M_{t}$ with $0<t\leq \tfrac{\pi}d$, $t$ is an $g^{\mathrm{st}}$-arc length parameter.

Notice that the parameter $t$, when presented as
 $t(x)=\mathrm{dist}_{S^{n-1}(1)}(x,M_0)$,
 is an isoparametric function on $(S^{n-1}(1)\backslash(M_0\cup M_{\pi/d}),g^{\mathrm{st}})$ for the foliation $M_t$.

\subsection{Identification between a normal plane and $\mathbb{R}^2$ with $D_{2d}$-action}
\label{subsection-2-4}
Later we will frequently use the following identification between $\mathbb{R}^2$ and a normal plane for the isoparametric foliation $M_t$ on $(S^{n-1}(1),g^{\mathrm{st}})$.

Let $\mathbf{V}\subset\mathbb{R}^n$ be a normal plane for $M_t$, i.e., $\gamma=\mathbf{V}\cap S^{n-1}(1)$ is a normal geodesic for $M_t$ in $(S^{n-1}(1),g^{\mathrm{st}})$. We parametrize $\gamma$ as
$\gamma(t)$ by its $g^{\mathrm{st}}$-arc length, and require $\gamma(0)\in M_0$.
Then $v_1=\gamma(0)$ and $v_2=\gamma(\tfrac{\pi}{2})$, when they are viewed as unit vectors, provide an orthonormal basis
for $\mathbf{V}$. We will identify $\mathbf{V}$ with $\mathbb{R}^2$ such that $v_1$ and $v_2$
are identified with the standard orthonormal basis vectors $e_1=(1,0)$
and $e_2=(0,1)$ respectively.

This identification depends on the choice of $v_1$ from $\gamma\cap M_0$ and the direction of curve
$\gamma(t)$. Changing $v_1$ and changing the direction of $\gamma(t)$ result linear isometries of $\mathbb{R}^2$ which belong to the finite group $D_{2d}$.
On the normal plane $\mathbf{V}$ for $M_t$, $D_{2d}$ is the group of all linear isometries
which preserves $\gamma\cap M_0$. It is the dihedral group when $d>1$, and $\mathbb{Z}_2$ when $d=1$. For $\mathbb{R}^2$,
$D_{2d}$ is generated by the right multiplications by
$$\left(
    \begin{array}{cc}
      1 & 0 \\
      0 & -1 \\
    \end{array}
  \right)\quad\mbox{and}\quad
  \left(
    \begin{array}{cc}
      \cos\tfrac{2\pi}{d} & \sin\tfrac{2\pi}d \\
      -\sin\tfrac{2\pi}d & \cos\tfrac{2\pi}d \\
    \end{array}
  \right)
$$
on row vectors. Alternatively, when we use the polar coordinates
$(r,t)\in\mathbb{R}_{>0}\times \mathbb{R}/(2\mathrm{Z}\pi)$ for $x=(x_1,x_2)=(r\cos t,r\sin t)$ on $\mathbb{R}^2$ or $x=x_1v_1+x_2v_2=r\cos t \ v_1+r\sin t\ v_2$ on $\mathbf{V}$, we have the corresponding $D_{2d}$-action on the space $\mathbb{R}/(2\mathrm{Z}\pi)$ of all polar $t$-coordinates, which
is generated by the mappings $t\mapsto -t$ and $t\mapsto t+\tfrac{2\pi}d$.

Generally speaking, as long as the subjects we discuss later have the $D_{2d}$-symmetry (i.e., $D_{2d}$-invariancy or $D_{2d}$-equivariancy), then it does not depend on the identification when we translate them from $\mathbf{V}$ to $\mathbb{R}^2$ or vice versa.
\subsection{Spherical coordinates and spherical local frame}
\label{subsection-2-5}
Let $M_t$ be an isoparametric foliation on
$(S^{n-1}(1),g^{\mathrm{st}})$ with $d$ principal curvatures.
For any point $x$ in the conic open subset  $$C(S^{n-1}(1)\backslash(M_0\cup M_{\pi/d}))=\mathbb{R}_{>0}  (S^{n-1}(1)\backslash(M_0\cup M_{\pi/d}))=\mathbb{R}^n\backslash(\mathbb{R}_{\geq0}M_0\cup
\mathbb{R}_{\geq0}M_{\pi/d}),$$
its {\it spherical coordinates} induced by (the foliation) $M_t$,  $(r,t,\xi)\in\mathbb{R}_{>0}\times (0,\tfrac{\pi}{d})\times M_{{\pi/2d}}$, is determined by the following requirements:
$r=|x|>0$, $x/|x|\in M_t$, and there exists a normal geodesic segment in $(S^{n-1}(1),g^{\mathrm{st}})$ for the given foliation, which connects $x$ to $\xi$ without passing the two focal submanifolds.
The mapping from $x$ to its spherical
coordinates $(r,t,\xi)$ is a diffeomorphism between $C(S^{n-1}(1)\backslash(M_0\times M_{\pi/d}))$ and
$\mathbb{R}_{>0}\times (0,\tfrac{\pi}d)\times M_{\pi/2d}$.

Then we construct the local frame in $C(S^{n-1}(1)\backslash(M_0\cup M_{\pi/d}))$ with spherical $\xi$-coordinates contained in some sufficiently small open subset
$U$ of $M_{\pi/2d}$.
Here {\it local frame} means a set of smooth tangent vector fields defined on the same open subset,
which values at each point provide a basis of the tangent space.

For the spherical $r$- and
$t$-coordinates induced by $M_t$, we have the tangent vector fields $\partial_r$ and $\partial_t$. That means, $\partial_r$ generates the rays initiating from the origin, and $\partial_t$ generates
the normal geodesics for the foliation $M_{r,t}=r M_t=\{rx| \forall x\in M_t\}$ on $(S^{n-1}(r)\backslash(M_{r,0}\cup  M_{r,\pi/d}),{g^{\mathrm{st}}})$.
Obviously we have $|\partial_r|^2={g^{\mathrm{st}}}(\partial_r,\partial_r)=1$ and
$[\partial_r,\partial_t]=0$. Our convention for parametrizing $M_t$ implies $|\partial_t|^2=r^2$.

The other local tangent vector fields, $X_1,\cdots,X_{n-2}$, are tangent to the foliation $M_{r,t}$. Firstly, we construct
them on $U\subset M_{{\pi/2d}}$, such that the following are satisfied:
\begin{enumerate}
\item Each $X_i$ is a tangent vector field of constant length on $U$;
\item For each $i$, there is a principal curvature value \begin{equation}\label{principoal-curvature-constant-kappa-i}
    \kappa_i=\cot(t+\tfrac{k_i\pi}{d}),\quad k_i\in\{0,\cdots,d-1\},
    \end{equation}
    for $M_t$ in $(S^{n+1}(1),{g^{\mathrm{st}}})$, with respect to  the normal direction $\partial_t$, such that the value of $X_i$ at each point of $U$ is a
 eigenvector for the eigenvalue $\kappa_i$ of the shape operator;
\item At each point of $U$, the values of $X_i$ for all $1\leq i\leq n-2$ provide a $g^{\mathrm{st}}$-orthogonal basis for the tangent space of $M_{{\pi/2d}}$.
\end{enumerate}
Notice that any $\xi\in M_{\pi/2d}$ has a sufficient small neighborhood $U$ in $M_{\pi/2d}$ with no topological
obstacle to the above construction of $X_1,\cdots,X_{n-2}$.
If we ignore the multiplicities, then we have
$\{k_1,\cdots,k_{n-2}\}=\{0,\cdots,d-1\}$.

Then we extend each $X_i$ such that
$$[\partial_r,X_i]=[\partial_t,X_i]=0,\quad\forall 1\leq i\leq n-2.$$
Indeed, if $X_i$ generates the local diffeomorphisms $\rho_s$ on $M_{{\pi/2d}}$,  then after the extension, it generates $\tilde{\rho}_s$ with the spherical coordinates presentation
$\tilde{\rho}_s(r,t,\xi)=(r,t,\rho_s(\xi))$. At each point, $X_1,\cdots,X_{n-2}$ linearly span the tangent space of $M_{r,t}$. So their brackets $[X_i,X_j]$ are tangent to
the foliation $M_{r,t}$ as well. For simplicity, we denote
$$[X_i,X_j]\equiv 0\ \mbox{(mod}\ X_1,\cdots,X_{n-2}\mbox{)}.$$

Along any normal geodesic for $M_t$ in $(S^{n-1}(1),{g^{\mathrm{st}}})$,
each $X_i$ can be extended to a Jacobi field on the whole great circle,
which vanishes at one of $M_0$ and $M_{\pi/d}$. So,  $X_i/|X_i|$ is parallel along each $t$-curve (i.e. the spherical $r$- and $\xi$-coordinates are fixed) for ${g^{\mathrm{st}}}$ on $\mathbb{R}^n\backslash\{0\}$. By the positive 1-homogeneity,  we may denote
\begin{equation}\label{formula-X_i^2}
|X_i|^2={g^{\mathrm{st}}}(X_i,X_i)=r^2 f_i(t)\quad\mbox{with}\quad
f_i(t)=a_i \sin^2(t+\tfrac{k_i\pi}{d}),
\end{equation}
where $a_i$ is some positive constant and $k_i\in\{0,\cdots,d-1\}$ is the integer in (\ref{principoal-curvature-constant-kappa-i}).

Now
we have constructed the ${g^{\mathrm{st}}}$-orthogonal local
frame $\{\partial_r,\partial_t,X_1,\cdots,X_{n-2}\}$.
For simplicity, we will call it a
{\it spherical local frame} induced by $M_t$.
Above discussion can be summarized as the following lemma.

\begin{lemma} \label{lemma-1}
Let $\{\partial_r,\partial_t,X_1,\cdots,X_{n-2}\}$
be a spherical local frame induced by the isoparametric foliation $M_t$ on $(S^{n-1}(1),g^{\mathrm{st}})$ with $d$ principal curvatures. Then we have the following:
\begin{enumerate}
\item The brackets among tangent vector fields in this spherical local frame satisfy
    \begin{eqnarray*}
    & &[\partial_r,\partial_t]=0, \quad [\partial_r,X_i]=[\partial_t,X_i]=0,\, \forall i,\\
    & &[X_i,X_j]\equiv 0\, \mathrm{(mod}\  X_1,\cdots,X_{n-2}\mathrm{)},\, \forall i,j.
    \end{eqnarray*}
\item The standard flat metric
${g^{\mathrm{st}}}$ on $\mathbb{R}^{n}\backslash\{0\}$ can be presented as
$${g^{\mathrm{st}}}={d}r^2+r^2{d}t^2
+r^2f_1(t)\theta_1^2+\cdots+r^2 f_{n-2}(t)\theta_{n-2}^2,$$
where $\{{\rm d}r,{\rm d}t,\theta_1,\cdots,\theta_{n-2}\}$
is the dual frame for $\{\partial_r,\partial_t,X_1,\cdots,X_{n-2}\}$,
and $f_i(t)$ is given in (\ref{formula-X_i^2}).
\end{enumerate}
\end{lemma}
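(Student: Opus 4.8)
The plan is to read off both assertions from the construction that precedes the statement, supplying the verifications that were only asserted there. Essentially everything reduces to how the frame was extended off the slice $U\subset M_{\pi/2d}$ together with the elementary geometry of normal geodesics for an isoparametric foliation.

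For part (1), the relation $[\partial_r,\partial_t]=0$ holds because $r$ and $t$ belong to the spherical coordinate system, so $\partial_r$ and $\partial_t$ are coordinate vector fields and hence commute. The relations $[\partial_r,X_i]=[\partial_t,X_i]=0$ hold by construction: each $X_i$ was extended off $U$ precisely by requiring these brackets to vanish, equivalently by declaring $X_i$ invariant under the flow $\tilde{\rho}_s(r,t,\xi)=(r,t,\rho_s(\xi))$. For the last relation I would argue that, since the flows of $\partial_r$ and $\partial_t$ preserve each $X_i$, they carry the distribution spanned by $X_1,\dots,X_{n-2}$ to itself; at each point this distribution is exactly the tangent space $TM_{r,t}$ of the leaf through that point. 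Because the $M_{r,t}$ form a foliation, $TM_{r,t}$ is integrable, so by the Frobenius theorem $[X_i,X_j]$ is again a section of it, i.e. $[X_i,X_j]\equiv 0\ (\mathrm{mod}\ X_1,\dots,X_{n-2})$.

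For part (2) it suffices to show that $\{\partial_r,\partial_t,X_1,\dots,X_{n-2}\}$ is $g^{\mathrm{st}}$-orthogonal and to record the diagonal entries; writing $g^{\mathrm{st}}$ in the dual coframe then gives the stated formula at once. The norms are already fixed: $|\partial_r|^2=1$, $|\partial_t|^2=r^2$ from the arc-length parametrization of $M_t$ scaled by $r$, and $|X_i|^2=r^2f_i(t)$ from (\ref{formula-X_i^2}). Orthogonality splits into three types. First, $\partial_t$ and every $X_i$ are tangent to the sphere $S^{n-1}(r)$ while $\partial_r$ is the outward radial direction, so $g^{\mathrm{st}}(\partial_r,\partial_t)=g^{\mathrm{st}}(\partial_r,X_i)=0$. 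Second, inside $S^{n-1}(r)$ the field $\partial_t$ is tangent to the normal geodesic for $M_{r,t}$, hence $g^{\mathrm{st}}$-orthogonal to the leaf $M_{r,t}$ to which the $X_i$ are tangent, so $g^{\mathrm{st}}(\partial_t,X_i)=0$. Third, for $i\neq j$ one must show $g^{\mathrm{st}}(X_i,X_j)=0$ throughout the conic region.

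The mutual orthogonality of the $X_i$ is the only point needing real care, so I expect it to be the main obstacle. On $U\subset M_{\pi/2d}$ the $X_i$ were chosen as a $g^{\mathrm{st}}$-orthogonal basis of shape-operator eigenvectors. To propagate this, I would use that $X_i/|X_i|$ is $g^{\mathrm{st}}$-parallel along each $t$-curve (the Jacobi-field statement recorded just before the lemma) and is trivially parallel along the rays. Since parallel transport is a linear $g^{\mathrm{st}}$-isometry, differentiating $g^{\mathrm{st}}(X_i/|X_i|,X_j/|X_j|)$ along $\partial_t$ (and along $\partial_r$) shows this inner product is constant along $t$-curves and rays, hence equal to its value $0$ on $U$; as every point of the conic region is reached from $U$ by flowing along $\partial_t$ and $\partial_r$, orthogonality holds everywhere. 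With this established, assembling the diagonal entries $1,r^2,r^2f_1(t),\dots,r^2f_{n-2}(t)$ in the coframe $\{\mathrm{d}r,\mathrm{d}t,\theta_1,\dots,\theta_{n-2}\}$ yields the claimed expression for $g^{\mathrm{st}}$.
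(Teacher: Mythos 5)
Your proposal is correct and follows essentially the same route as the paper, which proves Lemma \ref{lemma-1} simply by summarizing the construction in Section \ref{subsection-2-5}: the brackets vanish by the defining extension of the $X_i$ and the tangency of $[X_i,X_j]$ to the leaves, and the metric presentation follows from $|\partial_r|^2=1$, $|\partial_t|^2=r^2$, (\ref{formula-X_i^2}), and orthogonality propagated from $U$ by the parallelism of $X_i/|X_i|$ along $t$-curves and rays. Your treatment of the mutual orthogonality $g^{\mathrm{st}}(X_i,X_j)=0$ for $i\neq j$ just makes explicit what the paper leaves implicit.
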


Using Lemma \ref{lemma-1}, we can further calculate
the Levi-Civita connection of $(\mathbb{R}^n\backslash\{0\},{g^{\mathrm{st}}})$.

\begin{lemma}\label{lemma-2}
For the Levi-Civita connection ${\tilde{\nabla}}$  of $(\mathbb{R}^n\backslash\{0\},{g^{\mathrm{st}}})$, we have
\begin{eqnarray*}
& &{\tilde{\nabla}}_{\partial_r}\partial_r=0,\quad
{\tilde{\nabla}}_{\partial_r}\partial_t=
{\tilde{\nabla}}_{\partial_t}\partial_r=\tfrac1r\partial_t,\quad
{\tilde{\nabla}}_{\partial_t}\partial_t=-r\partial_r,\\
& &{\tilde{\nabla}}_{\partial_r}X_i={\tilde{\nabla}}_{X_i}\partial_r=
\tfrac1r X_i,\ \forall i,\quad
{\tilde{\nabla}}_{\partial_t}X_i={\tilde{\nabla}}_{X_i}\partial_t
=\tfrac{1}{2f_i(t)}\tfrac{{\rm d}}{{\rm d}t}f_i(t)  X_i,\ \forall i,\\
& &{\tilde{\nabla}}_{X_i}X_i\equiv-rf_i(t)\partial_r
-\tfrac{1}{2}\tfrac{{\rm d}}{{\rm d}t}f_i(t)\partial_t\ \mathrm{(mod}\ X_1,\cdots,X_{n-2}\mathrm{)},\ \forall i,\\
& &{\tilde{\nabla}}_{X_i}X_j\equiv0\
\mathrm{(mod}\ X_1,\cdots,X_{n-2}\mathrm{)},\ \forall i\neq j.
\end{eqnarray*}
\end{lemma}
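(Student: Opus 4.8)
The plan is to compute $\tilde{\nabla}$ directly from the Koszul formula, exploiting that $\{\partial_r,\partial_t,X_1,\dots,X_{n-2}\}$ is a $g^{\mathrm{st}}$-orthogonal frame with the explicit length-squares recorded in Lemma~\ref{lemma-1}(2), namely $g^{\mathrm{st}}(\partial_r,\partial_r)=1$, $g^{\mathrm{st}}(\partial_t,\partial_t)=r^2$ and $g^{\mathrm{st}}(X_i,X_i)=r^2f_i(t)$. I would start from the Koszul identity
\begin{equation*}
2g^{\mathrm{st}}(\tilde{\nabla}_Y Z,W)=Yg^{\mathrm{st}}(Z,W)+Zg^{\mathrm{st}}(Y,W)-Wg^{\mathrm{st}}(Y,Z)+g^{\mathrm{st}}([Y,Z],W)-g^{\mathrm{st}}([Y,W],Z)-g^{\mathrm{st}}([Z,W],Y),
\end{equation*}
which characterizes $\tilde{\nabla}$ uniquely. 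Since the frame is orthogonal, for each ordered pair $(Y,Z)$ of frame fields the coefficient of $W$ in $\tilde{\nabla}_Y Z$ is the right-hand side evaluated at that $W$, divided by $2g^{\mathrm{st}}(W,W)$; so the whole lemma reduces to reading off finitely many such evaluations.

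Two observations keep the bookkeeping short. First, every metric coefficient above depends only on $r$ and $t$; because $X_i$ is tangent to the leaf $M_{r,t}$ it annihilates both $r$ and $t$, so $X_i g^{\mathrm{st}}(\cdot,\cdot)=0$ on all frame fields, and likewise $\partial_t g^{\mathrm{st}}(\partial_t,\partial_t)=\partial_t(r^2)=0$. Second, by Lemma~\ref{lemma-1}(1) the bracket $[X_i,X_j]$ lies in the span of $X_1,\dots,X_{n-2}$, hence is $g^{\mathrm{st}}$-orthogonal to both $\partial_r$ and $\partial_t$; the remaining brackets $[\partial_r,\partial_t]$, $[\partial_r,X_i]$, $[\partial_t,X_i]$ all vanish identically.

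I would then treat the cases in the following order. Because $[\partial_r,\partial_t]=[\partial_r,X_i]=[\partial_t,X_i]=0$, torsion-freeness already yields the symmetries $\tilde{\nabla}_{\partial_r}\partial_t=\tilde{\nabla}_{\partial_t}\partial_r$, $\tilde{\nabla}_{\partial_r}X_i=\tilde{\nabla}_{X_i}\partial_r$ and $\tilde{\nabla}_{\partial_t}X_i=\tilde{\nabla}_{X_i}\partial_t$, so it suffices to handle $(\partial_r,\partial_r)$, $(\partial_r,\partial_t)$, $(\partial_t,\partial_t)$, $(\partial_r,X_i)$, $(\partial_t,X_i)$ and $(X_i,X_j)$. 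For the pairs built from $\partial_r,\partial_t$ the only surviving Koszul term is $\partial_r(r^2)=2r$, which after division gives $\tilde{\nabla}_{\partial_r}\partial_r=0$, $\tilde{\nabla}_{\partial_r}\partial_t=\tfrac1r\partial_t$ and $\tilde{\nabla}_{\partial_t}\partial_t=-r\partial_r$. For $(\partial_r,X_i)$ the surviving term is $\partial_r(r^2f_i)=2rf_i$, giving the $X_i$-component $\tfrac1r$ and no others; for $(\partial_t,X_i)$ the surviving term is $\partial_t(r^2f_i)=r^2f_i'$, giving the $X_i$-component $\tfrac{1}{2f_i}f_i'$. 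Finally, for $(X_i,X_i)$ I pair only against $\partial_r$ and $\partial_t$: the $\partial_r$-pairing is $-\partial_r(r^2f_i)=-2rf_i$ and the $\partial_t$-pairing is $-\partial_t(r^2f_i)=-r^2f_i'$, producing $-rf_i\partial_r-\tfrac12 f_i'\partial_t$ modulo the $X$'s, while for $(X_i,X_j)$ with $i\neq j$ both the $\partial_r$- and $\partial_t$-pairings vanish since $g^{\mathrm{st}}(X_i,X_j)=0$ and $[X_i,X_j]\perp\partial_r,\partial_t$.

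The computation is routine, so the one point I would emphasize—the only place where anything could go wrong—is that the bracket $[X_i,X_j]$ is known only modulo $X_1,\dots,X_{n-2}$. The proof must guarantee that this incomplete information never feeds into the $\partial_r$- or $\partial_t$-components of any $\tilde{\nabla}_Y Z$. This is exactly what the orthogonality of the spherical local frame secures: it forces $g^{\mathrm{st}}([X_i,X_j],\partial_r)=g^{\mathrm{st}}([X_i,X_j],\partial_t)=0$, so every stated formula whose right-hand side involves $\partial_r$ or $\partial_t$ is \emph{exact}, and the indeterminacy is confined entirely to the $X$-components, which is precisely what the ``$\mathrm{mod}\ X_1,\dots,X_{n-2}$'' notation is meant to absorb.
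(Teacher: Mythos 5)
Your Koszul-formula computation is correct, and it is essentially the argument the paper intends: the paper states Lemma \ref{lemma-2} without a written proof, saying only that it follows by calculation from the bracket relations and metric coefficients in Lemma \ref{lemma-1}, which is exactly the data you feed into the Koszul identity. Your closing remark — that orthogonality of the frame keeps the indeterminacy of $[X_i,X_j]$ out of the $\partial_r$- and $\partial_t$-components, so only the $X$-components need the ``mod'' — correctly pins down the one point that needs care.
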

\section{Minkowski norm induced by an isoparametric foliation}

\subsection{Minkowski norm and Hessian metric}
\label{subsection-3-1}
A {\it Minkowski norm} $F$ on $\mathbb{R}^{n}$ with $n\geq 2$
is a continuous
function satisfying the following conditions:
\begin{enumerate}
\item Positiveness and smoothness. The restriction of $F$ to $\mathbb{R}^{n}\backslash\{0\}$ is positive and smooth.
\item Positive 1-homogeneity. For any $\lambda\geq0$ and any $x\in\mathbb{R}^{n }$, $F(\lambda x)=\lambda F(x)$.
\item Strong convexity. For the linear coordinates $x=(x_1,\cdots,x_{n})$, the Hessian matrix
$\left(\tfrac{{\partial}^2 }{{\partial}x_i{\partial}x_j}E\right)$ for $E=\tfrac12F^2$, which is also called the {\it fundamental tensor}
\cite{Be1941}, is positive definite at any $x\neq0$.
\end{enumerate}
By its positive 1-homogeneity and strong convexity,
the Minkowski norm $F$ is totally determined by its
{\it indicatrix},  $S_F=\{x\in\mathbb{R}^{n }| F(x)=1\}$, which is a smooth convex sphere surrounding the origin.
On $\mathbb{R}^{n}\backslash\{0\}$, the Minkowski norm $F$
determines a Riemannian metric $g =g(\cdot,\cdot)$, such that
for any $u,v\in \mathbb{R}^{n}=T_x(\mathbb{R}^{n}\backslash\{0\})$, we have
\begin{equation*}
g(u,v)=\tfrac{\partial^2}{\partial s\partial t}{}|_{s=t=0} \left(\tfrac12 F(x+su+tv)^2\right)
\end{equation*}
at $x\in\mathbb{R}^{n}\backslash\{0\}$. We call $g$
the {\it Hessian metric} of $F$ and use the same $g$ to denote its restriction to submanifolds of $\mathbb{R}^n\backslash\{0\}$. Sometimes, when the norm $F$ and the nonzero base vector $x$ need to be specified, we denote it as $g^F_x(\cdot,\cdot)$.

For example,
a Minkowski norm $F$ is {\it Euclidean} if and only if
its Hessian matrices are irrelevant to the choice of $x\in\mathbb{R}^{n}\backslash\{0\}$, i.e., the Hessian metric $g$ can be viewed as an inner product on $\mathbb{R}^n$ which satisfies $F(x)\equiv g(x,x)^{1/2}$ for every $x\in\mathbb{R}^n$.
An {\it $(\alpha,\beta)$-norm}
is of the form $F=\alpha\varphi(\tfrac{\beta}{\alpha})$, in which $\alpha$ is an Euclidean norm, $\beta$ is a homogeneous linear function, and $\varphi(s)$ is some positive one-variable function. An $(\alpha_1,\alpha_2)$-norm
is of the form
$F(x)=\alpha(x)\varphi(\tfrac{\alpha(x_1)}{\alpha(x)})$, where $\alpha$ is an Euclidean norm, $\varphi(s)$ is some one-variable function, and $x=x_1+x_2$ is with respect to a fixed
$\alpha$-orthogonal decomposition $\mathbb{R}^{n}=\mathbf{V}_1+\mathbf{V}_2$.

\subsection{Induced Minkowski norm and a criterion theorem}

Let $M_t$ be an isoparametric foliation
on $(S^{n-1}(1),g^{\mathrm{st}})$ with $d$ principal curvatures. Then
we can construct a Minkowski norm $F$ on $\mathbb{R}^n$, requiring it to be constant on each $M_t$. For simplicity, we call it a {\it Minkowski norm induced by  $M_t$}. Using the spherical coordinates, we can present it as $F=r\sqrt{2 f(t)}$, where $f(t)$ is some positive function on $[0,\tfrac{\pi}{d}]$.

Let us more closely observe the two features in the norm $F=r\sqrt{2 f(t)}$, the foliation $M_t$ and the function $f(t)$.

Firstly, when $M_t$ satisfies $d=1$ or $2$. The induced Minkowski norm is an $(\alpha,\beta)$- or an $(\alpha_1,\alpha_2)$-norm. Indeed,
by choosing suitable inner product, all $(\alpha,\beta)$- and $(\alpha_1,\alpha_2)$-norms can be induced by an isoparametric foliation on the unit sphere with $d=1$  or $d=2$.

Secondly, the function $f(t)$  for  $F=r\sqrt{2f(t)}$ can be determined by the restriction of $F$ to any maximally extended normal geodesic for $M_t$ in $(S^{n-1}(1),{g^{\mathrm{st}}})$. So $f(t)$ can be extended to
a $D_{2d}$-invariant positive smooth function for $t\in \mathbb{R}/(2\pi\mathbb{Z})$, i.e., we always have $f(t)=f(-t)$ and
$f(t)=f(t+\tfrac{2\pi}d)$.
The function $f(t)$ after extension is used in the polar coordinates
presentation $\overline{F}=r\sqrt{2f(t)}$ for the restriction
$\overline{F}=F|_{\mathbf{V}}$
of $F$ to any normal plane $\mathbf{V}$ for $M_t$.

The following criterion theorem answers exactly and explicitly when the formal expression $F=r\sqrt{2f(t)}$ really defines a Minkowski norm.

\begin{theorem} \label{main-thm-1}
Let $M_t$ be an isoparametric foliation on $(S^{n-1}(1),{g^{\mathrm{st}}})$ with $d$ principal curvatures. Then the following are equivalent:
\begin{enumerate}
\item The function $f(t)$ on $[0,\tfrac{\pi}{d}]$ defines a Minkowski norm $F=r\sqrt{2f(t)}$ on $\mathbb{R}^{n}$ induced by  $M_t$;
\item The function $f(t)$ can be extended to a $D_{2d}$-invariant positive smooth function on $\mathbb{R}/(2\mathbb{Z}\pi)$ such that
\begin{equation}\label{eq:convexity-requirement}
2f(t)\tfrac{{\rm d}^2}{{\rm d}t^2}f(t)-\left(\tfrac{{\rm d}}{{\rm d}t}f(t)\right)^2+4f(t)^2>0
\end{equation}
is satisfied everywhere;
\item The function $f(t)$ after extension (as indicated in (2)) defines a $D_{2d}$-invariant Minkowski norm on $\mathbb{R}^2$ with the polar coordinates presentation $\overline{F}=r\sqrt{2f(t)}$.
\end{enumerate}
\end{theorem}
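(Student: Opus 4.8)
The plan is to compute the fundamental tensor $g=\mathrm{d}^2E$ of $E=\tfrac12F^2=r^2f(t)$ directly in the spherical local frame, using the identity $g(Y,Z)=Y(ZE)-(\tilde\nabla_YZ)E$ for the flat Levi-Civita connection $\tilde\nabla$, whose Christoffel symbols vanish in linear coordinates so that this second covariant derivative reproduces the Euclidean Hessian. Feeding in $\partial_rE=2rf$, $\partial_tE=r^2f'$, $X_iE=0$ together with the connection formulas of Lemma \ref{lemma-2}, I expect the matrix of $g$ in $\{\partial_r,\partial_t,X_1,\dots,X_{n-2}\}$ to be block diagonal: a $2\times2$ block $\left(\begin{smallmatrix}2f & rf'\\ rf' & r^2(f''+2f)\end{smallmatrix}\right)$ in the $\{\partial_r,\partial_t\}$ directions, and scalar entries $g(X_i,X_i)=r^2a_i\bigl(2f\sin^2 s_i+f'\sin s_i\cos s_i\bigr)$ with $s_i=t+\tfrac{k_i\pi}{d}$, all mixed entries vanishing because $X_iE=0$ annihilates the relevant terms in Lemma \ref{lemma-2}. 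Positive definiteness of $g$ on $C(S^{n-1}(1)\setminus(M_0\cup M_{\pi/d}))$ then splits into three requirements: $f>0$; positivity of the $2\times2$ determinant, which equals $r^2\bigl(2ff''-(f')^2+4f^2\bigr)$ and so recovers (\ref{eq:convexity-requirement}); and $g(X_i,X_i)>0$ for each $i$.

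The $2\times2$ block is literally the fundamental tensor of the polar presentation $\overline F=r\sqrt{2f(t)}$ on a normal plane $\mathbf V\cong\mathbb{R}^2$, so its positivity together with $f>0$, smoothness, and $D_{2d}$-invariance is precisely the statement that $\overline F$ is a $D_{2d}$-invariant Minkowski norm; this yields the equivalence of (2) and (3) with essentially no extra work, since a smooth positive $D_{2d}$-invariant $f$ on the circle automatically makes $r\sqrt{2f(t)}$ smooth on $\mathbb{R}^2\setminus\{0\}$. The genuine content is therefore the passage from the normal-plane data to $\mathbb{R}^n$, that is (3)$\Rightarrow$(1), and beyond the $2\times2$ block this needs exactly two things: the transverse positivity $g(X_i,X_i)>0$, and the smoothness of $F$ across the focal rays $\mathbb{R}_{>0}M_0$ and $\mathbb{R}_{>0}M_{\pi/d}$.

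For the transverse positivity I would divide by $|X_i|^2=r^2a_i\sin^2 s_i>0$ (note $s_i\in(0,\pi)$ on the interior) to reduce $g(X_i,X_i)>0$ to $2f+f'\cot s_i>0$, equivalently $\mathrm{Im}\bigl((2f+if')e^{is_i}\bigr)>0$. Writing $\Theta(t)=t+\arctan\tfrac{f'(t)}{2f(t)}$, so that $\arg\bigl((2f+if')e^{is_i}\bigr)=\Theta(t)+\tfrac{k_i\pi}{d}$, a direct differentiation gives $\Theta'(t)=\dfrac{4f^2+2ff''-(f')^2}{4f^2+(f')^2}$, whose numerator is once more the left side of (\ref{eq:convexity-requirement}). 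Hence (\ref{eq:convexity-requirement}) is equivalent to $\Theta$ being strictly increasing. Since $D_{2d}$-invariance forces $f'(0)=f'(\tfrac{\pi}{d})=0$, we get $\Theta(0)=0$ and $\Theta(\tfrac\pi d)=\tfrac\pi d$, so $\Theta$ maps $(0,\tfrac\pi d)$ bijectively onto itself; therefore $\Theta(t)+\tfrac{k_i\pi}{d}\in(\tfrac{k_i\pi}{d},\tfrac{(k_i+1)\pi}{d})\subset(0,\pi)$ for every $k_i\in\{0,\dots,d-1\}$, which is exactly the upper-half-plane condition. This shows that the transverse conditions are automatically implied by (\ref{eq:convexity-requirement}) once the global $D_{2d}$-symmetry is imposed, and it is the point that makes condition (2) as clean as stated; I regard it as the elegant core of the argument rather than the hardest part.

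The remaining and, I expect, hardest step is the smoothness of $F$ along $\mathbb{R}_{>0}M_0$ and $\mathbb{R}_{>0}M_{\pi/d}$, where the spherical coordinates degenerate and individual frame vectors $X_i$ collapse ($\sin s_i\to0$). Here the plan is to test smoothness through the normal slices of the focal submanifolds: using the normal exponential map of $M_0$ (resp. $M_{\pi/d}$), one reduces smoothness of $F$ to smoothness of its restriction to each normal plane together with an evenness and periodicity matching across the $2d$ focal points of the corresponding great circle, and this matching is precisely the $\mathbb{Z}_2$-reflection and rotation content of $D_{2d}$-invariance. Thus a $D_{2d}$-invariant smooth extension of $f$ on $\mathbb{R}/(2\mathbb{Z}\pi)$ is both necessary and sufficient for $F$ to be smooth on all of $\mathbb{R}^n\setminus\{0\}$; strong convexity then propagates to the focal rays by continuity, the collapsing $X_i$ causing no trouble because $g(X_i,X_i)/|X_i|^2=2f+f'\cot s_i$ stays positive up to the boundary. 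Finally, the direction (1)$\Rightarrow$(2) is the easy one: restricting a given induced $F$ to a normal plane yields a genuine two-dimensional Minkowski norm whose strong convexity is (\ref{eq:convexity-requirement}), while smoothness of $F$ and the symmetry of the foliation under the Weyl group $D_{2d}$ force $f$ to be the stated $D_{2d}$-invariant smooth function. The focal-point smoothness remains the main obstacle throughout, exactly as flagged in the introduction.
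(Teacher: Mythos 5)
Your overall route is the paper's: compute $g={\rm d}^2E$ in the spherical local frame via Lemma \ref{lemma-2}, observe that the $\{\partial_r,\partial_t\}$-block is exactly the fundamental tensor of the planar norm $\overline{F}=r\sqrt{2f(t)}$ (giving (2)$\Leftrightarrow$(3) for free), reduce the remaining content to the transverse entries $g(X_i,X_i)$ and to smoothness across the focal cones. Your $\Theta(t)=t+\arctan\tfrac{f'(t)}{2f(t)}$ argument is correct and is in fact the same argument as the paper's, in different clothing: the paper's angle function $s(t)$, defined by the vanishing of $g^{\overline{F}}_{c(t)}(c(t),\gamma(s))$, equals $\Theta(t)+\tfrac{\pi}{2}$, its strict monotonicity is your computation $\Theta'=\bigl(2ff''-(f')^2+4f^2\bigr)/\bigl(4f^2+(f')^2\bigr)>0$, and the endpoint normalization $s(0)=\tfrac{\pi}{2}$, $s(\tfrac{\pi}{d})=\tfrac{\pi}{d}+\tfrac{\pi}{2}$ is your $f'(0)=f'(\tfrac{\pi}{d})=0$. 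Your version is more explicit and slightly cleaner; nothing is lost by it.

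Two steps are asserted rather than proved, and one of them would fail as literally stated. First, "strong convexity then propagates to the focal rays by continuity" is not a valid inference: positive definiteness is an open, not closed, condition, and on $\mathbb{R}_{>0}M_0$ the frame itself degenerates (the $X_i$ with $k_i=0$ vanish), so the limit of positive definite Gram matrices only gives semi-definiteness. Your supplementary observation that $g(X_i,X_i)/|X_i|^2=2f+f'\cot s_i$ tends to $2f(0)+f''(0)>0$ is the right ingredient, but to close the argument you must either show that the renormalized frame $\{\partial_r,T/|T|,X_1/|X_1|,\dots\}$ converges along each normal geodesic to an actual basis of $\mathbb{R}^n$ on which $g$ stays diagonal with positive entries, or argue as the paper does: split $T_x(\mathbb{R}^n\setminus\{0\})$ at $x\in N_0$ into $T_x(\mathbb{R}_{>0}N_0)$ (spanned by $\partial_r$ and the surviving $X_i$) and its $g$-orthogonal complement, every nonzero vector of which spans a normal plane together with $x$ and is a positive multiple of the corresponding $T$, whence $g(v,v)>0$ by (\ref{eq:convexity-requirement}). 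Second, the smoothness claim at the focal cones needs more than "matching across the focal points": smoothness of $F$ on every normal plane does not imply smoothness on $\mathbb{R}^n$. The actual mechanism is that evenness of $f$ at $0$ gives $f(t)=\psi(t^2)$ with $\psi$ smooth at $0$ (Whitney/L'Hospital), and $t^2=\mathrm{dist}_{S^{n-1}(1)}(\cdot,M_0)^2$ extends smoothly across $M_0$ via the normal exponential map; you name the exponential map but not the factorization through $t^2$, which is the step that makes the argument work.
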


Its proof is postponed to Section \ref{subsection-3-4}.

The special cases of Theorem \ref{main-thm-1} with $d=1$ or $2$ reprove the following known results for $(\alpha,\beta)$-norms (see the discussion for Proposition 5 in \cite{Cr2020}) and $(\alpha_1,\alpha_2)$-norms (see Theorem 3.2 in \cite{DX2016}).

\begin{corollary}\label{cor-1}
Let $\alpha$, $\beta$ and $\varphi(s)$ be an Euclidean norm,
a nonzero homogeneous linear function on $\mathbb{R}^n$ with $n\geq2$, and a positive one-variable function respectively.
Then $F=\alpha\varphi(\tfrac{\beta}{\alpha})$
defines a Minkowski norm if and only if $\varphi(s)$ is a positive smooth function on $[-b,b]$, where
$b$ is the $\alpha$-norm of $\beta$,
 and $\varphi(s)$
satisfies
\begin{equation}\label{0036}
\varphi(s)-s\tfrac{{\rm d}}{{\rm d}s}\varphi(s)+
(b^2-|s|^2)\tfrac{{\rm d}^2}{{\rm d}s^2}\varphi(s)>0
\end{equation}
on $[-b,b]$.
\end{corollary}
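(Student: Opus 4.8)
The plan is to derive Corollary \ref{cor-1} directly from Theorem \ref{main-thm-1} in the case $d=1$, by translating the $(\alpha,\beta)$-data into the spherical-coordinate data $(r,t,f)$. First I would fix the Euclidean structure so that $\alpha(x)=|x|$, write $\beta(x)=\langle x,w\rangle$ with $|w|=b$, and let $v=w/b$ be the associated unit vector. The isoparametric foliation with $d=1$ attached to $v$ consists of the distance spheres $M_t=\{u\in S^{n-1}(1)\mid \mathrm{dist}_{S^{n-1}(1)}(u,v)=t\}$, whose two focal submanifolds $M_0=\{v\}$ and $M_\pi=\{-v\}$ are the poles; as recalled in Section \ref{subsection-3-1}, every $(\alpha,\beta)$-norm is induced by such a foliation. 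For this foliation the spherical $r$-coordinate is exactly $\alpha$, and along a normal geodesic one has $\langle x/|x|,v\rangle=\cos t$, so the natural variable of an $(\alpha,\beta)$-norm satisfies $s=\beta/\alpha=b\cos t\in[-b,b]$. Hence the presentations $F=\alpha\varphi(\beta/\alpha)$ and $F=r\sqrt{2f(t)}$ agree precisely when
\[
f(t)=\tfrac12\,\varphi(b\cos t)^2 .
\]
Since $\cos t$ is even and $2\pi$-periodic, the $D_2$-invariance of $f$ (i.e.\ evenness) is automatic.

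With this dictionary in hand, the proof reduces to matching the two parts of hypothesis (2) of Theorem \ref{main-thm-1} with those of the corollary. The smoothness-and-positivity part is the more delicate one: I must show that ``$\varphi$ is positive and smooth on the closed interval $[-b,b]$'' is equivalent to ``$f(t)=\tfrac12\varphi(b\cos t)^2$ extends to a positive smooth $D_2$-invariant function on $\mathbb{R}/(2\mathbb{Z}\pi)$.'' Positivity transfers at once. For smoothness, away from the focal set $t\in\{0,\pi\}$ (equivalently $s\in(-b,b)$) the map $t\mapsto b\cos t$ is a local diffeomorphism, so smoothness passes back and forth by the chain rule; the crux is the behaviour at the poles $t=0,\pi$, where $s=\pm b$ are the endpoints of $[-b,b]$. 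This is exactly the subtle smoothness at the focal submanifolds flagged before Theorem A, and it is governed by the classical even-function fact that $t\mapsto\Psi(\cos t)$ is smooth, even and $2\pi$-periodic if and only if $\Psi$ is smooth on $[-1,1]$ up to its endpoints. Applying this to $\Psi(\cdot)=\tfrac12\varphi(b\,\cdot)^2$ yields the required equivalence.

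It remains to convert the convexity inequality. Here I would substitute $f(t)=\tfrac12\varphi(b\cos t)^2$ into the left-hand side of (\ref{eq:convexity-requirement}) and simplify by the chain rule, using $\tfrac{ds}{dt}=-b\sin t$, $\tfrac{d^2s}{dt^2}=-b\cos t=-s$, and $b^2\sin^2 t=b^2-s^2$. A direct computation collapses the cross terms and gives, with $s=b\cos t$,
\[
2f\tfrac{d^2}{dt^2}f-\bigl(\tfrac{d}{dt}f\bigr)^2+4f^2
=\varphi(s)^3\Bigl(\varphi(s)-s\tfrac{d}{ds}\varphi(s)+(b^2-s^2)\tfrac{d^2}{ds^2}\varphi(s)\Bigr).
\]
Because $\varphi>0$, the factor $\varphi(s)^3$ is positive, so (\ref{eq:convexity-requirement}) holds everywhere if and only if the bracket, which is precisely the left-hand side of (\ref{0036}) (note $|s|^2=s^2$), is positive on $[-b,b]$. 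Combined with the smoothness equivalence, this matches condition (2) of Theorem \ref{main-thm-1} with the stated conditions on $\varphi$, while condition (1) of that theorem is exactly the assertion that $F=\alpha\varphi(\beta/\alpha)$ is a Minkowski norm; this completes the reduction. I expect the main obstacle to be the endpoint-smoothness matching at the poles, since the convexity computation, though the heart of the numerical identity, is routine once the dictionary $f(t)=\tfrac12\varphi(b\cos t)^2$ is established.
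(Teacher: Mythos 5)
Your proposal is correct and follows essentially the same route as the paper: the dictionary $f(t)=\tfrac12\varphi(b\cos t)^2$, the identity equating the left side of (\ref{eq:convexity-requirement}) with $\varphi(s)^3$ times the left side of (\ref{0036}), and an endpoint-smoothness argument at $t=0,\pi$ (the paper uses L'Hospital plus the implicit function theorem where you invoke the even-function/Whitney fact, which amounts to the same thing). The only point you leave implicit is the case $n=2$, where the isoparametric-foliation framework (assumed throughout the paper only for $n>2$) degenerates and one should instead appeal to the equivalence of (2) and (3) in Theorem \ref{main-thm-1}, exactly as the paper notes in its final sentence; your computation already supplies everything needed for that step.
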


\begin{proof}We first prove Corollary \ref{cor-1} when $n>2$.

Using the
suitable $\alpha$-orthonormal coordinates $(x_1,\cdots,x_n)$ and the corresponding spherical $r$- and $t$-coordinates, the expression
$F=\alpha\varphi(\tfrac{\beta}{\alpha})$ can be changed to $F=r\varphi(b\cos t)=r\sqrt{2f(t)}$ with $f(t)=\tfrac{1}{2}\varphi(b\cos t)^2$ and  $t\in[0,\pi]$. Direct calculation shows
\begin{equation}\label{0035}
2f(t)\tfrac{{\rm d}^2}{{\rm d}t^2}f(t)-\left(\tfrac{{\rm d}}{{\rm d}t}f(t)\right)^2+4f(t)^2=\varphi(s)^3\  \left(\varphi(s)-s\tfrac{{\rm d}}{{\rm d}s}\varphi(s)+
(b^2-|s|^2)\tfrac{{\rm d}^2}{{\rm d}s^2}\varphi(s)\right),
\end{equation}
where $s=b\cos t$.

Assume $F=\alpha\varphi(\tfrac{\beta}{\alpha})$ defines a Minkowski norm, then by the equivalence between (1) and (2) in Theorem \ref{main-thm-1} for $d=1$, $f(t)$ is a smooth even function around $t=0$. By L'Hospital Rule and the theory for implicit function, we can find a function $\psi(s)$ which is smooth at $s=0$,
such that $f(t)=\psi(b^2\sin^2 t)$ around $t=0$. Then $\varphi(s)=\psi(b^2-s^2)$ is smooth at $s=b$. The smoothness of $\varphi(s)$ at $s=-b$ can be similarly verified. Checking the other
claims for $\varphi(s)$ in Corollary \ref{cor-1}
are easy routines.

Assume $\varphi(s)$ satisfies the requirements in Corollary \ref{cor-1},
then obviously $f(t)=\tfrac12\varphi(b\cos t)^2$ is a positive smooth
function on $\mathbb{R}$ which satisfies $f(t)=f(-t)$ and $f(t)=f(t+2\pi)$, i.e. $f(t)$ is a function on $\mathbb{R}/(2\mathbb{Z}\pi)$ which is invariant with respect to  the action
of $D_2=\mathbb{Z}_2$.
The inequality (\ref{eq:convexity-requirement}) follows immediately
after (\ref{0035}) and (\ref{0036}). Finally, the equivalent between (1)
and (2) in Theorem \ref{main-thm-1} for $d=1$ tells us $F=\alpha\varphi(\tfrac{\beta}{\alpha})$ is a Minkowski norm.

To summarize, above argument proves Corollary \ref{cor-1} when $n>2$. When $n=2$, we can use the equivalence between (2) and (3)
in Theorem \ref{main-thm-1} and similar argument to prove this corollary.
\end{proof}

\begin{corollary}\label{cor-2}
Let $\alpha$ be an Euclidean norm on $\mathbb{R}^n$ with $n\geq2$,  $\mathbb{R}^n=\mathbf{V}'+\mathbf{V}''$ an $\alpha$-orthogonal decomposition with $0<\dim \mathbf{V}'=m<n$, and $\varphi(s)$ some positive function on $[0,1]$. Then $F(x)=\alpha\varphi(\tfrac{\alpha(x_1)}{\alpha(x)})$ defines a Minkowski norm if and only if both $\varphi(s)$ and $\psi(s)=\varphi(\sqrt{1-s^2})$ are smooth functions on $[0,1]$, and
the inequality $$\varphi(s)-s\tfrac{{\rm d}}{{\rm d}s}\varphi(s)+
(1-|s|^2)\tfrac{{\rm d}^2}{{\rm d}s^2}\varphi(s)>0$$
is satisfied everywhere.
\end{corollary}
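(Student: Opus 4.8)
The plan is to mirror the proof of Corollary \ref{cor-1}, replacing the foliation with $d=1$ by one with $d=2$, and to exploit the $x_1\leftrightarrow x_2$ symmetry that is special to the $(\alpha_1,\alpha_2)$-case. First I would realize $F(x)=\alpha\varphi(\tfrac{\alpha(x_1)}{\alpha(x)})$ as a Minkowski norm induced by an isoparametric foliation $M_t$ on $(S^{n-1}(1),g^{\mathrm{st}})$ with $d=2$, whose two focal submanifolds are the unit spheres of $\mathbf{V}'$ and $\mathbf{V}''$. Choosing $\alpha$-orthonormal coordinates adapted to $\mathbb{R}^n=\mathbf{V}'+\mathbf{V}''$ and passing to the spherical $r$- and $t$-coordinates, one has $s:=\tfrac{\alpha(x_1)}{\alpha(x)}=\cos t$ and $\tfrac{\alpha(x_2)}{\alpha(x)}=\sin t=\sqrt{1-s^2}$ on the fundamental domain $t\in[0,\tfrac{\pi}2]$, so that $F=r\varphi(\cos t)=r\sqrt{2f(t)}$ with $f(t)=\tfrac12\varphi(\cos t)^2$. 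The organizing observation is the two presentations $f(t)=\tfrac12\varphi(\cos t)^2=\tfrac12\psi(\sin t)^2$, where $\psi(s)=\varphi(\sqrt{1-s^2})$: the first is adapted to the focal submanifold $M_{\pi/2}$ (where $s=0$), the second to $M_0$ (where $s=1$).

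The computational heart is the analogue of (\ref{0035}). A direct differentiation using $s=\cos t$ and $\sin^2 t=1-s^2$ gives
\begin{equation*}
2f(t)\tfrac{{\rm d}^2}{{\rm d}t^2}f(t)-\left(\tfrac{{\rm d}}{{\rm d}t}f(t)\right)^2+4f(t)^2=\varphi(s)^3\left(\varphi(s)-s\tfrac{{\rm d}}{{\rm d}s}\varphi(s)+(1-s^2)\tfrac{{\rm d}^2}{{\rm d}s^2}\varphi(s)\right).
\end{equation*}
Since $\varphi>0$, the convexity inequality (\ref{eq:convexity-requirement}) of Theorem \ref{main-thm-1} for $d=2$ is then equivalent, on $s\in[0,1]$ (a fundamental domain for $D_4$), to the stated inequality for $\varphi$; by $D_4$-invariance it suffices to verify it there, and the identity being a trigonometric identity extends the equivalence up to the endpoints $s=0,1$.

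For the forward implication I would assume $F$ is a Minkowski norm. By the equivalence of (1) and (2) in Theorem \ref{main-thm-1} with $d=2$, $f$ extends to a positive smooth $D_4$-invariant function on $\mathbb{R}/(2\mathbb{Z}\pi)$, hence in particular to a smooth even function at both $t=0$ and $t=\tfrac{\pi}2$. Applying L'Hospital's rule and the implicit function theorem to the even function $f$ exactly as in Corollary \ref{cor-1}: at $t=\tfrac{\pi}2$ the representation $f=\tfrac12\varphi(|\cos t|)^2$ forces $\varphi$ to be smooth (and even) at $s=0$, while at $t=0$ the representation $f=\tfrac12\psi(\sin t)^2$ forces $\psi$ to be smooth (and even) at $s=0$; smoothness on the open interval is automatic. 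Together with the involution $\psi(s)=\varphi(\sqrt{1-s^2})$ interchanging the two endpoints, these show that both $\varphi$ and $\psi$ are smooth on $[0,1]$, and the inequality follows from the displayed identity. Conversely, if $\varphi$ and $\psi$ are smooth on $[0,1]$ and the inequality holds, the two presentations of $f$ show that $f$ is positive, smooth and $D_4$-invariant on the circle, and the identity yields (\ref{eq:convexity-requirement}); the equivalence of (1) and (2) in Theorem \ref{main-thm-1} then gives that $F$ is a Minkowski norm. Finally, the case $n=2$ (where $m=1$) is handled, as in Corollary \ref{cor-1}, by invoking the equivalence of (2) and (3) in Theorem \ref{main-thm-1} in place of that of (1) and (2).

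The main obstacle I anticipate is the smoothness bookkeeping at the two focal submanifolds. Unlike the $d=1$ case, where one focal set is a pair of antipodal points, here both $M_0$ and $M_{\pi/2}$ are positive-dimensional spheres, and the even (folding) extension of $f$ across each of $t=0$ and $t=\tfrac{\pi}2$ must be matched with one-sided smoothness of the profile at $s=1$ and $s=0$ respectively. The clean way to manage this is precisely the simultaneous use of the two profiles $\varphi$ and $\psi$ related by the involution $s\mapsto\sqrt{1-s^2}$: smoothness of $\psi$ at one endpoint is what encodes the evenness of $\varphi$ at the opposite endpoint (and vice versa), which is why both conditions appear in the statement. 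Verifying that ``$\varphi$ and $\psi$ smooth on $[0,1]$'' is equivalent to ``$f$ smooth and $D_4$-invariant on the circle'' is thus the one step that genuinely requires care rather than routine computation.
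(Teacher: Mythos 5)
Your overall strategy is the one the paper intends (the paper itself only remarks that the argument ``is very similar to that in the proof of Corollary \ref{cor-1}'' and skips the details): pass to the spherical coordinates of the foliation whose leaves are the level sets of $\alpha(x_1)/\alpha(x)$, write $f(t)=\tfrac12\varphi(\cos t)^2$, verify the analogue of (\ref{0035}) (your identity is correct -- it is the $b=1$ case of the computation already done for Corollary \ref{cor-1}), and reduce everything to Theorem \ref{main-thm-1} with $d=2$. Your treatment of the smoothness bookkeeping at the two focal submanifolds -- that one-sided smoothness of $\psi$ at an endpoint encodes evenness of $\varphi$ at the opposite endpoint and vice versa, and that this is exactly what the hypothesis ``$\varphi$ and $\psi$ both smooth on $[0,1]$'' packages -- is the right way to see why both profiles appear in the statement, and it is carried out correctly.

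There is, however, one gap. Your opening step, realizing $F$ as induced by an isoparametric foliation with $d=2$ whose focal submanifolds are the unit spheres of $\mathbf{V}'$ and $\mathbf{V}''$, fails when $\min(m,n-m)=1$ and $n>2$: in that case one of the would-be focal submanifolds is $S^0$ (two points) and the generic level sets of $\alpha(x_1)/\alpha(x)$ on $S^{n-1}(1)$ are disconnected (two copies of $S^{n-2}$), so this is not an isoparametric foliation in the sense of Section \ref{subsection-2-2}, and Theorem \ref{main-thm-1} with $d=2$ cannot be invoked. This is precisely why the paper's remark restricts the use of $d=2$ to the case $2\leq m\leq n-2$. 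Your parenthetical ``the case $n=2$ (where $m=1$)'' suggests you have conflated ``$m=1$'' with ``$n=2$''; for $n>2$ and $m\in\{1,n-1\}$ one must instead work with the $d=1$ foliation -- for instance, reduce to Corollary \ref{cor-1} applied to $\tilde\varphi(s)=\varphi(|s|)$ with $\beta=x_1$ and $b=1$, noting that smoothness of $\tilde\varphi$ on $[-1,1]$ is equivalent to smoothness of $\varphi$ on $[0,1]$ together with evenness of $\varphi$ at $s=0$, the latter condition being exactly the smoothness of $\psi$ at $s=1$. With that case supplied, your argument is complete.
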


In the proof of Corollary \ref{cor-2}, we need to apply
Theorem \ref{main-thm-1} for $d=2$ to discuss the case $2\leq m\leq n-2$. As the argument for each case is very similar to that in the proof of Corollary \ref{cor-1}, we skip the details.

\subsection{Some calculation for the Hessian metric}
The calculation for the Hessian metric $g$ of the Minkowski norm $F=r\sqrt{2f(t)}$ by a spherical local frame induced by $M_t$ is the foundation for later discussion.
It is a useful observation that $g$ is in fact the second covariant derivative of $E=\tfrac12F^2=r^2f(t)$ with respect to
the Levi-Civita connection ${\tilde{\nabla}}$ on $(\mathbb{R}^n\backslash\{0\},{g^{\mathrm{st}}})$, so we have
$$g(X,Y)=X\cdot(Y\cdot E)-({\tilde{\nabla}}_XY)\cdot E,$$
in which $\cdot$ denotes the directional derivative action of vector fields on differentiable functions and $\tilde{\nabla}$ is the Levi-Civita connection on $(\mathbb{R}^n\backslash\{0\},g^{\mathrm{st}})$.
Use Lemma \ref{lemma-2} and notice $X_i\cdot E=0$, we get
\begin{lemma}\label{lemma-3}
Let $\{\partial_r,\partial_t,X_1,\cdots,X_{n-2}\}$ be
a spherical local frame, and $F=r\sqrt{2f(t)}$
a Minkowski norm, induced by the same isoparametric foliation $M_t$ on $(S^{n-1}(1),{g^{\mathrm{st}}})$, then
we have
\begin{eqnarray*}
& &g(\partial_r,\partial_r)=2f(t),\quad
g(\partial_t,\partial_t)=r^2\tfrac{{\rm d}^2}{{\rm d}t^2}f(t)+2r^2f(t),\\
& &g(X_i,X_i)=r^2\left(2f_i(t)f(t)+\tfrac12\tfrac{{\rm d}}{{\rm d}t}f_i(t)
\tfrac{{\rm d}}{{\rm d}t}f(t)\right),\ \forall i,\\
& & g(\partial_r,\partial_t)=r\tfrac{{\rm d}}{{\rm d}t}f(t),
\quad
g(\partial_r,X_i)=g(\partial_t,X_i)=0, \ \forall i,
\quad g(X_i,X_j)=0,\ \forall i\neq j.
\end{eqnarray*}
\end{lemma}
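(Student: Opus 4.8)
The plan is to read the statement directly off the identity $g(X,Y) = X\cdot(Y\cdot E) - (\tilde\nabla_X Y)\cdot E$ recorded just above the lemma, feeding in the connection coefficients from Lemma~\ref{lemma-2} together with the first-order directional derivatives of $E = r^2 f(t)$. So the first step is to compute those first derivatives. Since $E$ depends only on $r$ and $t$, I get $\partial_r\cdot E = 2rf(t)$ and $\partial_t\cdot E = r^2\tfrac{{\rm d}}{{\rm d}t}f(t)$, while $X_i\cdot E = 0$ for every $i$, because each $X_i$ is tangent to the leaf $M_{r,t}$ along which $E$ is constant (equivalently, $X_i$ moves only the spherical $\xi$-coordinate). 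This last vanishing is the workhorse of the whole computation.

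Second, I would evaluate $g$ on each unordered pair of frame fields. For the pairs built from $\partial_r$ and $\partial_t$ this is a short calculation: apply the relevant second directional derivative of $E$ and subtract the single connection term supplied by Lemma~\ref{lemma-2} (for instance $\tilde\nabla_{\partial_t}\partial_t = -r\partial_r$ and $\tilde\nabla_{\partial_r}\partial_t = \tfrac1r\partial_t$). Because the second covariant derivative of a function is symmetric for the torsion-free connection $\tilde\nabla$, I only need one ordering per pair, which also confirms that the resulting $g$ is symmetric.

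The mixed and leaf-direction entries are where the vanishing $X_i\cdot E = 0$ does its work. For $g(\partial_r,X_i)$, $g(\partial_t,X_i)$ and $g(X_i,X_j)$ with $i\neq j$, both the second-derivative term and the connection term either lie in the span of the $X_k$ or are applied to $X_i\cdot E = 0$, so every contribution drops out and these entries are $0$. For the diagonal term $g(X_i,X_i)$ the leading term $X_i\cdot(X_i\cdot E)$ vanishes, and the only surviving contribution is $-(\tilde\nabla_{X_i}X_i)\cdot E$; here the formula for $\tilde\nabla_{X_i}X_i$ in Lemma~\ref{lemma-2} is determined only modulo $X_1,\dots,X_{n-2}$, but this ambiguity is harmless precisely because every $X_k$ annihilates $E$, so only the $\partial_r$- and $\partial_t$-components $-rf_i(t)\partial_r - \tfrac12\tfrac{{\rm d}}{{\rm d}t}f_i(t)\,\partial_t$ contribute. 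Pairing these against $\partial_r\cdot E$ and $\partial_t\cdot E$ yields $r^2\bigl(2f_i(t)f(t) + \tfrac12\tfrac{{\rm d}}{{\rm d}t}f_i(t)\tfrac{{\rm d}}{{\rm d}t}f(t)\bigr)$.

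There is essentially no serious obstacle: the computation is a finite bookkeeping exercise once the first derivatives of $E$ and the connection table of Lemma~\ref{lemma-2} are in hand. The one point deserving care, and the only place a naive reading could go astray, is the use of the ``mod $X_1,\dots,X_{n-2}$'' relation for $\tilde\nabla_{X_i}X_i$; the proof must note explicitly that $X_k\cdot E = 0$ renders the undetermined tangential part irrelevant, which is what legitimizes the clean closed form for $g(X_i,X_i)$.
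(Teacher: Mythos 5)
Your proposal is correct and is exactly the paper's argument: the paper states the identity $g(X,Y)=X\cdot(Y\cdot E)-(\tilde\nabla_XY)\cdot E$ immediately before the lemma and then says only ``Use Lemma \ref{lemma-2} and notice $X_i\cdot E=0$,'' which is precisely the bookkeeping you carry out, including the correct observation that the ``mod $X_1,\dots,X_{n-2}$'' ambiguity in $\tilde\nabla_{X_i}X_i$ is harmless because every $X_k$ annihilates $E$.
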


We see from Lemma \ref{lemma-3} that, though the spherical local frame $\{\partial_r,\partial_t,X_1,\cdots,X_{n-2}\}$ may not be
$g$-orthogonal, it is close, i.e., replacing $\partial_t$ with
$T=\partial_t-\tfrac{r}{2f(t)}\tfrac{{\rm d}}{{\rm d}t}f(t)\partial_r$,
then the frame $\{\partial_r,T,X_1,\cdots,X_{n-2}\}$ is $g$-orthogonal. Using Lemma \ref{lemma-3}, the $g$-norm square of $T$ can be easily calculated. To summarize, we have

\begin{lemma}\label{lemma-7}
The local frame $\{\partial_r,T,X_1,\cdots,X_{n-2}\}$
is $g$-orthogonal, in which $T=\partial_t-\tfrac{r}{2f(t)}\tfrac{{\rm d}}{{\rm d}t}f(t)\partial_r$ with
\begin{equation}\label{0007}
g(T,T)=\tfrac{r^2}{2f(t)}\left(2f(t)\tfrac{{\rm d}^2}{{\rm d}t^2}f(t)-\left(\tfrac{{\rm d}}{{\rm d}t}f(t)\right)^2+4f(t)^2\right).
\end{equation}
\end{lemma}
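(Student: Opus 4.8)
The plan is to verify the two assertions directly from the data in Lemma \ref{lemma-3}. First I would show that the modified frame $\{\partial_r,T,X_1,\dots,X_{n-2}\}$ is $g$-orthogonal, where $T=\partial_t-\tfrac{r}{2f(t)}\tfrac{{\rm d}}{{\rm d}t}f(t)\,\partial_r$. Since $g$ is bilinear and Lemma \ref{lemma-3} already records all pairwise values among the original frame vectors, every needed inner product is a short linear combination. The pairs not involving $T$ are immediate: $g(\partial_r,X_i)=0$, $g(X_i,X_j)=0$ for $i\neq j$ from Lemma \ref{lemma-3}. For the pairs involving $T$, I would expand using linearity. The essential computation is
\begin{equation*}
g(T,\partial_r)=g(\partial_t,\partial_r)-\tfrac{r}{2f(t)}\tfrac{{\rm d}}{{\rm d}t}f(t)\,g(\partial_r,\partial_r)
= r\tfrac{{\rm d}}{{\rm d}t}f(t)-\tfrac{r}{2f(t)}\tfrac{{\rm d}}{{\rm d}t}f(t)\cdot 2f(t)=0,
\end{equation*}
which confirms $T\perp\partial_r$; this is precisely the coefficient $\tfrac{r}{2f(t)}\tfrac{{\rm d}}{{\rm d}t}f(t)$ was chosen to annihilate. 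Similarly $g(T,X_i)=g(\partial_t,X_i)-\tfrac{r}{2f(t)}\tfrac{{\rm d}}{{\rm d}t}f(t)\,g(\partial_r,X_i)=0-0=0$, so $T$ is orthogonal to each $X_i$ as well.

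Second, I would compute $g(T,T)$ to establish the formula \eqref{0007}. Expanding the quadratic form and again invoking Lemma \ref{lemma-3},
\begin{equation*}
g(T,T)=g(\partial_t,\partial_t)-2\cdot\tfrac{r}{2f(t)}\tfrac{{\rm d}}{{\rm d}t}f(t)\,g(\partial_t,\partial_r)+\left(\tfrac{r}{2f(t)}\tfrac{{\rm d}}{{\rm d}t}f(t)\right)^2 g(\partial_r,\partial_r).
\end{equation*}
Substituting $g(\partial_t,\partial_t)=r^2\tfrac{{\rm d}^2}{{\rm d}t^2}f(t)+2r^2 f(t)$, $g(\partial_t,\partial_r)=r\tfrac{{\rm d}}{{\rm d}t}f(t)$, and $g(\partial_r,\partial_r)=2f(t)$, the cross term contributes $-\tfrac{r^2}{f(t)}\left(\tfrac{{\rm d}}{{\rm d}t}f(t)\right)^2$ and the last term contributes $+\tfrac{r^2}{2f(t)}\left(\tfrac{{\rm d}}{{\rm d}t}f(t)\right)^2$; combining these with the first term and factoring out $\tfrac{r^2}{2f(t)}$ yields exactly the right-hand side of \eqref{0007}.

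There is no real obstacle here: the lemma is a bookkeeping consequence of Lemma \ref{lemma-3}, and the only point requiring a moment's thought is confirming that the single coefficient in the definition of $T$ simultaneously orthogonalizes $T$ against $\partial_r$ (which it is designed to do) while automatically leaving $T\perp X_i$ intact (which holds because both $\partial_t$ and $\partial_r$ are already $g$-orthogonal to every $X_i$). Once those two observations are in place, both claims follow by the displayed linear-algebra manipulations, and one notes in passing that the factor in \eqref{0007} is exactly the quantity appearing in the convexity inequality \eqref{eq:convexity-requirement} of Theorem \ref{main-thm-1}, so positivity of $g(T,T)$ on $\mathbb{R}^n\backslash\{0\}$ is equivalent to strong convexity of $F$.
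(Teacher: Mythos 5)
Your proposal is correct and follows exactly the route the paper takes: the paper derives Lemma \ref{lemma-7} as an immediate consequence of Lemma \ref{lemma-3} by bilinearity, noting that the coefficient of $\partial_r$ in $T$ is chosen to kill $g(T,\partial_r)$ and that $g(T,T)$ is then "easily calculated." Your explicit expansions of $g(T,\partial_r)$, $g(T,X_i)$ and $g(T,T)$ reproduce that computation faithfully, and the closing remark linking \eqref{0007} to the convexity inequality \eqref{eq:convexity-requirement} matches how the paper uses the lemma in the proof of Theorem \ref{main-thm-1}.
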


Let $\mathbf{V}$ be any normal plane for $M_t$, which has a nonempty intersection with the defining domain for $\{\partial_r,\partial_t,X_1,\cdots,X_{n-2}\}$.
Then $\partial_t$
can be smoothly extended to the one on $\mathbf{V}\backslash\{0\}$
corresponding to the polar $t$-coordinate.
Since $\partial_r$ can be globally defined on $\mathbb{R}^n\backslash\{0\}$ (corresponding to the spherical $r$-coordinate), we see $T=\partial_t-\tfrac{r}{2f(t)}\tfrac{{\rm d}}{{\rm d}t}f(t)\partial_r$ can be extended to a smooth tangent vector field on $\mathbf{V}\backslash\{0\}$ which is nonvanishing everywhere.
By the positive 1-homogeneity,
the smooth extensions to $\mathbf{V}\backslash\{0\}$ for $X_1,\cdots,X_{n-2}$ can also be observed. It is easy but useful to see that the nonzero values of $X_1,\cdots,X_{n-2}$ provide a basis for $T_{x}M_{r,t}$ when $x\in\mathbf{V}\cap M_{r,t}=\mathbf{V}\cap rM_t$ with every $r>0$ and $t\in[0,\tfrac{\pi}d]$. We summarize these observations
as the following lemma.

\begin{lemma}\label{lemma-14}
Let $\mathbf{V}$ be any normal plane for $M_t$, which has a nonempty intersection with the defining domain for $\{\partial_r,\partial_t,X_1,\cdots,X_{n-2}\}$. Then the restriction of $\{\partial_r,\partial_t,X_1,\cdots,X_{n-2}\}$ to this intersection
can be canonically extended to $\mathbf{V}\backslash\{0\}$ satisfying
the following:
\begin{enumerate}
\item $\partial_t$ and $T=\partial_t-\tfrac{r}{2f(t)}\tfrac{{\rm d}}{{\rm d}t}f(t)\partial_r$ are nonvanishing everywhere on
    $\mathbf{V}\backslash\{0\}$.
\item The nonzero values of $X_1,\cdots,X_{n-2}$ provide the basis
of the tangent space for $M_{r,t}$ for every $r>0$ and $t\in[0,\tfrac{\pi}d]$.
\end{enumerate}
\end{lemma}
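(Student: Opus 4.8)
The plan is to exploit the fact that, although the spherical coordinates $(r,t,\xi)$ degenerate along the focal cones $\mathbb{R}_{>0}M_0$ and $\mathbb{R}_{>0}M_{\pi/d}$, the restriction of the whole picture to a single normal plane $\mathbf{V}$ is governed by the honest polar coordinates $(r,t)\in\mathbb{R}_{>0}\times\mathbb{R}/(2\mathbb{Z}\pi)$ of $\mathbf{V}\cong\mathbb{R}^2$, which are smooth on all of $\mathbf{V}\backslash\{0\}$. I would therefore split the argument into the radial and angular fields $\partial_r,\partial_t,T$, whose extension is immediate from polar coordinates, and the foliation fields $X_i$, whose extension requires the Jacobi-field description recorded in Section \ref{subsection-2-5}.

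First I would extend $\partial_r$, $\partial_t$ and $T$. The field $\partial_r$ is the radial field, which is globally defined and nonvanishing on $\mathbb{R}^n\backslash\{0\}$, so it restricts to $\mathbf{V}\backslash\{0\}$. Restricted to $\mathbf{V}$, the field $\partial_t$ is precisely the angular polar-coordinate field, which is smooth and nonvanishing on $\mathbf{V}\backslash\{0\}$ with $g^{\mathrm{st}}(\partial_t,\partial_t)=r^2>0$. For $T=\partial_t-\tfrac{r}{2f(t)}\tfrac{{\rm d}}{{\rm d}t}f(t)\partial_r$ the decisive input is Theorem \ref{main-thm-1}: the profile $f(t)$ extends to a positive smooth $D_{2d}$-invariant function on $\mathbb{R}/(2\mathbb{Z}\pi)$, so the coefficient $\tfrac{r}{2f(t)}\tfrac{{\rm d}}{{\rm d}t}f(t)$ is smooth on $\mathbf{V}\backslash\{0\}$ and $T$ is a smooth field there. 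Since the $\partial_t$-component of $T$ is identically $1$ and $\partial_r,\partial_t$ are linearly independent, $T$ vanishes nowhere; this yields part (1).

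The substantive step is part (2), the extension of $X_1,\dots,X_{n-2}$ across the focal cones. Here I would invoke the facts assembled in Section \ref{subsection-2-5}: along each normal geodesic of $M_t$ in $(S^{n-1}(1),g^{\mathrm{st}})$ each $X_i$ is a Jacobi field vanishing at exactly one of $M_0,M_{\pi/d}$, the unit field $X_i/|X_i|$ is $\tilde{\nabla}$-parallel along the $t$-curves, and $|X_i|^2=r^2 f_i(t)$ with $f_i(t)=a_i\sin^2(t+\tfrac{k_i\pi}{d})$ by (\ref{formula-X_i^2}). Concretely I would write $X_i=r\sqrt{a_i}\,\sin(t+\tfrac{k_i\pi}{d})\,U_i$, where $U_i$ is the parallel unit field obtained by $\tilde{\nabla}$-transporting $X_i/|X_i|$ along the great circle $\gamma=\mathbf{V}\cap S^{n-1}(1)$, the positive $1$-homogeneity then producing the extension over all radii. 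On the original chamber $t\in(0,\tfrac{\pi}{d})$ one has $\sin(t+\tfrac{k_i\pi}{d})>0$, so this agrees with $X_i/|X_i|=U_i$; the signed factor $\sin(t+\tfrac{k_i\pi}{d})$, rather than its absolute value, is exactly what the Jacobi-field framework supplies, so the extended $X_i$ is smooth across the focal values $t\in\tfrac{\mathbb{Z}\pi}{d}$ and vanishes precisely where $\sin(t+\tfrac{k_i\pi}{d})=0$, i.e. on $\mathbf{V}\cap(\mathbb{R}_{>0}M_0\cup\mathbb{R}_{>0}M_{\pi/d})$.

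Finally I would verify the basis claim. For $t\in(0,\tfrac{\pi}{d})$ every $X_i$ is nonzero, the $X_i$ are $g^{\mathrm{st}}$-orthogonal by construction, and there are $n-2=\dim M_{r,t}$ of them, so they form a basis of $T_xM_{r,t}$. At a focal value, say $t=0$, the vanishing $X_i$ are exactly those with $k_i=0$, while the survivors remain orthogonal and, by the dimension count for the focal submanifold, furnish a basis of $T_x(rM_0)$; the endpoint $t=\tfrac{\pi}{d}$ is symmetric. The main obstacle I anticipate is precisely the smoothness of the $X_i$-extension across the focal cones: one must ensure that the Jacobi-field/parallel-transport construction of Section \ref{subsection-2-5} yields a \emph{globally smooth} field on $\mathbf{V}\backslash\{0\}$ rather than a merely continuous field with a corner at $t\in\tfrac{\mathbb{Z}\pi}{d}$. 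This is guaranteed by the analytic form $f_i(t)=a_i\sin^2(t+\tfrac{k_i\pi}{d})$ together with the parallelism of $X_i/|X_i|$, which is why I would keep the sine factor signed throughout the verification.
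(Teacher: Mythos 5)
Your proposal is correct and follows essentially the same route as the paper, which records this lemma as a summary of the observational paragraph preceding it: $\partial_r$, $\partial_t$ and hence $T$ extend via the polar coordinates of $\mathbf{V}$ and the extended profile $f(t)$, while the $X_i$ extend by positive $1$-homogeneity and the Jacobi-field description $|X_i|^2=r^2f_i(t)$ from Section \ref{subsection-2-5}. You merely supply more detail than the paper does (the signed factor $\sin(t+\tfrac{k_i\pi}{d})$ times a parallel unit field, and the dimension count at the focal leaves), which is consistent with and fills in the paper's terse ``can also be observed'' and ``easy to see'' remarks.
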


\subsection{Proof of Theorem \ref{main-thm-1}}
\label{subsection-3-4}
We will mainly prove the equivalence between (1) and (2) in Theorem
\ref{main-thm-1}. The equivalence between (2) and (3) can be easily observed in the midway.

Firstly, we prove the claim in Theorem \ref{main-thm-1} from (2) to (1). We assume that $f(t)$ has been extended to a $D_{2d}$-invariant positive smooth function on $\mathbb{R}/(2\mathbb{Z}\pi)$ as claimed in Theorem \ref{main-thm-1}, i.e., we have
\begin{eqnarray}
& &f(t)=f(-t),\quad f(t)=f(\tfrac{2\pi}{d}-t),\quad f(t)=f(t+\tfrac{2\pi}{d}),
\label{0000}\\
& &2f(t)\tfrac{{\rm d}^2}{{\rm d}t^2}f(t)-\left(\tfrac{{\rm d}}{{\rm d}t}f(t)\right)^2+4f(t)^2>0\label{0001},
 \end{eqnarray}
for every $t\in\mathbb{R}/(2\mathbb{Z}\pi)$. Then we
prove $F=r\sqrt{2f(t)}$ is a Minkowski norm induced by $M_t$.
Its positiveness, positive 1-homogeneity, and smoothness on $C(S^{n-1}(1)\backslash(M_0\cup M_{\pi/d}))$ are obvious.

To prove its smoothness at $\mathbb{R}_{>0}M_0$, we can argue as following. By the first equality in (\ref{0000})
and an exercise of L'Hospital Rule, there exists
a positive function $\psi(s)$ such that $f(t)=\psi(t^2)$ and $\psi(s)$ is smooth at $s=0$.
As a function on $(S^{n-1}(1)\backslash (M_0\cup M_{\pi/d}),g^{\mathrm{st}})$,
the spherical $t$-coordinate
coincides with the distance $\mathrm{dist}_{S^{n-1}(1)}(\cdot,M_0)$.
Using the exponential map
for the normal bundle of $M_0$ in $(S^{n-1}(1)\backslash M_{\pi/d}, {g^{\mathrm{st}}})$, we see $t^2=\left(\mathrm{dist}_{S^{n-1}(1)}(\cdot,M_0)\right)^2$ can be smoothly extended to a neighborhood of $M_0$ in $S^{n-1}(1)$. So $F=r\sqrt{2f(t)}=r\sqrt{2\psi(t^2)}$ is
smooth at $\mathbb{R}_{>0}M_0$.

Using the second equality in (\ref{0000}) and similar argument, we can prove $F=r\sqrt{2f(t)}$
is smooth at $\mathbb{R}_{>0}M_{\pi/d}$. Then the smoothness
is verified. Since we have already observed the
positiveness, smoothness and positive 1-homogeneity for $F$, we see its indicatrix
$S_F$ is a smooth sphere surrounding the origin.

Now we verify the strong convexity, which is the most essential part of the proof. Let $\mathbf{V}$ be a normal plane for $M_t$. We consider any spherical local frame $\{\partial_r,\partial_t,X_1,\cdots,X_{n-2}\}$ induced by $M_t$, which defining domain has a nonempty intersection with $\mathbf{V}$. Denote $\overline{F}=F|_{\mathbf{V}}$, then
we have the polar coordinates presentation $\overline{F}=r\sqrt{2f(t)}$.
%Denote $c(t)$ the intersection curve between $\mathbf{V}$ and $S_F$ with $c(t)/|c(t)|=\gamma(t)$. We may parametrize $\gamma(t)$ by its arc length with $c(0)\in M_{0}$. So we also have $c(\tfrac{\pi}{d})\in M_{\pi/d})$, and when restricted to $(0,\tfrac{\pi}{d})$, the parameter $t$ coincides with the $t$-coefficient. In later discussion, we sometimes view $c(t)$ as a directional vector.

Using $\{\partial_r,\partial_t,X_1,\cdots,X_{n-2}\}$ and
$T=\partial_t-\tfrac{r}{2f(t)}\tfrac{{\rm d}}{{\rm d}t}f(t)\partial_r$,
the Hessian $g(\cdot,\cdot)$ of $E=\tfrac12F^2$
has the same presentations as in
Lemma \ref{lemma-3} and Lemma \ref{lemma-7}. For simplicity, we denote $g(X,Y)$ for $X,Y\in\{\partial_r,T,X_1,\cdots,X_{n-2}\}$ as $g_{\alpha\beta}$, where $\alpha$ and $\beta$ are the indices in the ordered set $\{r,T,1,\cdots,n-2\}$. For example
$g_{TT}=g(T,T)$,
$g_{ri}=g(\partial_r,X_i)$, etc..
Then $$(g_{\alpha\beta})=
\mathrm{diag}(g_{rr},g_{TT},g_{11},\cdots,g_{n-2,n-2})$$ is a diagonal matrix. By Lemma \ref{lemma-14}, when we restrict our discussion for $(g_{\alpha\beta})$ to $\mathbf{V}$, the notion of $g_{\alpha\beta}$ can be smoothly extended everywhere on
$\mathbf{V}\backslash\{0\}$.

From Lemma \ref{lemma-3}, we know $g_{rr}=2f(t)>0$.
By Lemma \ref{lemma-7}, the left side of the inequality (\ref{0001}) coincides with $\tfrac{2f(t)}{r^2}\  g_{TT}$. So (\ref{0001})
implies the left up $2\times 2$-block $\mathrm{diag}(g_{rr},g_{TT})$ in the Hessian matrix $(g_{\alpha\beta})$ is positive definite.

Before we go on with the discuss the other diagonal entries $g_{ii}$ in $(g_{\alpha\beta})$, we digress to prove
$\overline{F}=F|_{\mathbf{V}}$
is a Minkowski norm on $\mathbf{V}$. Its strong convexity is the only nontrivial issue
for us to consider. Since the polar coordinates presentation $\overline{F}=r\sqrt{2f(t)}$ looks the same as the spherical coordinates presentation for $F$, the Hessian matrix of $\overline{F}$ coincides with the left up $2\times 2$-block in $(g_{\alpha\beta})$.
We have seen its positive definiteness in the previous discussion, so $\overline{F}$ is strongly convex.
Indeed, this simple observation proves the equivalence between (2) and
(3) in Theorem \ref{main-thm-1}.

For the convenience when discussing $g_{ii}$, we denote $N_t=S_F\cap \mathbb{R}_{>0}M_t$ the foliation on $S_F$ induced
by $M_t$. We
parametrize
$S_{\overline{F}}=\mathbf{V}\cap S_{F}$ as $c(t)$ with
$t\in\mathbb{R}/(2\mathbb{Z}\pi)$, equivariantly with respect to  the action
of $D_{2d}$, such that $c(t)\in \mathbf{V}\cap N_t$ for all $t\in[0,\tfrac{\pi}d]$.
Then $\gamma(t)=\tfrac{c(t)}{|c(t)|}$ is a maximal normal geodesic
for $M_t$ on $(S^{n-1}(1),g^{\mathrm{st}})$ parametrized by its
$g^{\mathrm{st}}$-arc length with $\gamma(0)\in M_0$.

By Lemma \ref{lemma-3}, we have for each $1\leq i\leq n-2$,
\begin{eqnarray}
g_{ii}&=&r^2\left( 2f_i(t)f(t)+\tfrac12 \tfrac{{\rm d}}{{\rm d}t}f_i(t)\tfrac{{\rm d}}{{\rm d}t}f(t)\right)\nonumber\\
&=&\tfrac{a_i}{(2f(t))^{1/2}}\
\sin(t+\tfrac{k_i\pi}{d})\
\left({\sin(t+\tfrac{k_i\pi}d)}(2f(t))^{1/2}+
\tfrac{\cos(t+\tfrac{k_i\pi}d)}{(2f(t))^{1/2}}
\tfrac{{\rm d}}{{\rm d}t}f(t)
\right),
\label{0037}
\end{eqnarray}
at $x=c(t)$ with $t\in[0,\tfrac{\pi}d]$.
Here the function $f_i(t)=a_i\sin^2(t+\tfrac{k_i\pi}d)$ with positive constant $a_i$ and $k_i\in\{0,\cdots,n-1\}$ is
given in (\ref{formula-X_i^2}).

The factor $\sin(t+\tfrac{k_i\pi}{d})$ in the right side of (\ref{0037}) is non-negative for $t\in[0,\tfrac{\pi}d]$ and $k_i\in\{0,\cdots,d-1\}$, and it vanishes if and only if
\begin{equation}\label{0002}
\mbox{either}\quad
(t,k_i)=(0,0)\quad\mbox{or}\quad (t,k_i)=(\tfrac{\pi}{d},d-1).
\end{equation}
The factor
${\sin(t+\tfrac{k_i\pi}d)}(2f(t))^{1/2}+
\tfrac{\cos(t+\tfrac{k_i\pi}d)}{(2f(t))^{1/2}}
\tfrac{{\rm d}}{{\rm d}t}f(t)$ coincides with $g^{\overline{F}}_{c(t)}(c(t),\gamma(\tfrac{\pi}2-\tfrac{k_i\pi}{d}))$, i.e., the derivative of $\overline{E}=\tfrac12\overline{F}^2=r^2f(t)$ in the direction of $\gamma(\tfrac{\pi}2-\tfrac{k_i\pi}{d})$ at $x=c(t)$,
with $k_i\in\{0,\cdots,d-1\}$. %We claim $g^{\overline{F}}_{c(t)}(c(t),
%\gamma(\tfrac{\pi}2-\tfrac{k_i\pi}{d}))\geq$
%By the first two equalities of (\ref{0000}) from the $D_{2d}$-invariancy, $S_{\overline{F}}=S_F\cap\mathbf{V}$ is tangent to the direction of $\gamma(\pm\tfrac{\pi}2)$ at $c(0)$, and it is tangent to the direction of  $\gamma(\tfrac{\pi}{d}\pm\tfrac{\pi}{2})$ at $c(\tfrac{\pi}{d})$, i.e., we have
%$$g^{\overline{F}}_{c(0)}(c(0),\gamma(s))\geq 0,\ \forall s\in[-\tfrac{\pi}2,\tfrac{\pi}2],\quad\mbox{and}\quad
%g^{\overline{F}}_{c({\pi}/d)}(c(\tfrac{\pi}d),
%\gamma(s))\geq 0, \forall s\mbox{ and }\tfrac{\pi}d\pm\tfrac{\pi}2,$$
%where the equalities happen at end points for both.

We denote $s(t)\in[\tfrac{\pi}2,\tfrac{3\pi}2]$ for $t\in[0,\tfrac{\pi}d]$ such that $g^{\overline{F}}_{c(t)}(c(t),\gamma(s(t)- {\pi}))=
g^{\overline{F}}_{c(t)}(c(t),\gamma(s(t)))=0$ and $g^{\overline{F}}_{c(t)}(c(t),\gamma(s))>0$ for all $s\in (s(t)-\pi,s(t))$.
By the first two equalities of (\ref{0000}) from the $D_{2d}$-invariancy, we see $s(0)=\tfrac{\pi}2$ and $s(\tfrac{\pi}d)=
\tfrac{\pi}d+\tfrac{\pi}2$. When $t=0$, the interval $(\tfrac{\pi}2-\tfrac{(d-1)\pi}d,\tfrac{\pi}{2})$ is contained in
$(s(0)-\pi,s(0))$ with the same right end points.
By the strong convexity of
$\overline{F}$, $s(t)$ is a strictly increasing continuous function on $[0,\tfrac{\pi}{d}]$. So $(\tfrac{\pi}2-\tfrac{(d-1)\pi}d,\tfrac{\pi}{2})$ stays in the moving interval
$(s(t)-\pi,s(t))$ with $t$ increasing, until $t$ reaches $\tfrac{\pi}{d}$,
and the two intervals have the same left end points.

This observation proves
$$g^{\overline{F}}_{c(t)}(c(t),\gamma(s))\geq 0, \quad\forall t\in[0,\tfrac{\pi}d],\ \forall s\in[\tfrac{\pi}2-\tfrac{(d-1)\pi}d,\tfrac{\pi}2],$$
and the equality happens if and only if
$$\mbox{either}\quad (t,s)=(0,\tfrac{\pi}2)\quad\mbox{or}\quad
(t,s)=(\tfrac{\pi}d,
\tfrac{\pi}d-\tfrac{\pi}2).$$
So ${\sin(t+\tfrac{k_i\pi}d)}(2f(t))^{1/2}+
\tfrac{\cos(t+\tfrac{k_i\pi}d)}{(2f(t))^{1/2}}
\tfrac{{\rm d}}{{\rm d}t}f(t)$ is a non-negative factor in $g_{ii}$ as well, and it vanishes if and only if
(\ref{0002}) happens.

Summarizing above observations and using Lemma \ref{lemma-7},
Lemma \ref{lemma-14} and
the $D_{2d}$-symmetry,
we see $g_{ii}$ is strictly positive at
$x\in S _{\overline{F}}\backslash (N_0\cup N_{\pi/d})$, and $g_{ii}$ vanishes
at $x\in S_{\overline{F}}\cap (N_0\cup N_{\pi}/d)$ if and only if $X_i$ vanishes
there. With the normal plane $\mathbf{V}$ for $M_t$ changing
arbitrarily, we see
$(g_{\alpha\beta})$
is positive definite on  $S_F\backslash(N_0\cup N_{\pi/d})$.

For $x\in N_0$, Lemma \ref{lemma-7} and Lemma \ref{lemma-14} tell us that $T_x(\mathbb{R}_{>0}N_0)=T_{x/|x|}(\mathbb{R}_{>0}M_0)\subset \mathbb{R}^n$ has a $g$-orthogonal basis consisting of $\partial_r$ and the all nonzero values of $X_i$ at $x$. So the restriction of $g$ to $T_x(\mathbb{R}_{>0}N_0)$
is positive definite. On the other hand,
for any nonzero vector $v$ in the $g$-orthogonal complement $\mathbf{V}'$ of $T_x(\mathbb{R}_{>0}N_0)$ in $T_x(\mathbb{R}^n\backslash\{0\})$, $x$ and $v$ linearly span a normal plane for $M_t$, and up to a positive scalar change,
$v$ coincides with the values at $x$ for the vector field $T$
on $\mathbf{V}\backslash\{0\}$. By Lemma \ref{lemma-7} and (\ref{0001}), we see $g(v,v)>0$. So $g$ is positive definite on
$\mathbf{V}'\subset T_x(\mathbb{R}^n\backslash\{0\})$ as well.

This argument proves that $g$ is positive definite on $N_0$. Similar
argument proves the $g$ is positive definite on $N_{\pi/d}$.
By the positive 1-homogeneity, the proof for the strong convexity is done.

To summarize, above argument proves the claim in Theorem \ref{main-thm-1} from (2) to (1).

Nextly, we prove the claim in Theorem \ref{main-thm-1} from (1) to (2), i.e., we assume $F=r\sqrt{2f(t)}$ is a Minkowski norm and prove the properties of $f(t)$ claimed in Theorem \ref{main-thm-1}. We just need to discuss the restriction of $F$ to any normal plane for $M_t$, then we easily see (1) implies (3). The equivalence between (2) and (3) has been observed in the midway, so the claim from (1) to (2) is proved.

\subsection{Foliation on $S_F$ induced by $M_t$}
\label{subsection-3-5}
In Section \ref{subsection-3-4}, we have mentioned the foliation
$N_t=S_F\cap \mathbb{R}_{>0}M_t$ on $S_F$, which is induced by $M_t$ on $S^{n-1}(1)$.
Now we prove the following theorem.

\begin{theorem}\label{main-thm-2}
Let $F=r\sqrt{2f(t)}$ be a Minkowski norm induced by the isoparametric foliation $M_t$ on the unit sphere $(S^{n-1}(1),g^{\mathrm{st}})$ and $g$ its Hessian metric. Then the foliation $N_t=S_F\cap \mathbb{R}_{>0}M_t$ on $(S_F,g)$ is isoparametric.
\end{theorem}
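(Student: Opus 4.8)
The plan is to produce an isoparametric function on $(S_F,g)$ whose level sets are exactly the leaves $N_t$, reading off all the required data from the $g$-orthogonal spherical frame of Lemma~\ref{lemma-7}. Since $N_t=S_F\cap\mathbb{R}_{>0}M_t$, these leaves are precisely the level sets of the transverse coordinate $t|_{S_F}$, so by the definition of an isoparametric function it suffices to show that, on the regular part $S_F\backslash(N_0\cup N_{\pi/d})$, both $g(\mathrm{grad}\,t,\mathrm{grad}\,t)$ and $\Delta_g t$ (computed for $g|_{S_F}$) depend only on $t$, and then to upgrade $t$ to a function genuinely smooth across the two focal leaves. The one structural fact driving every computation is that on $S_F$ the equation $F=r\sqrt{2f(t)}=1$ forces $r^2=\tfrac{1}{2f(t)}$, so that each ambient quantity of Lemma~\ref{lemma-3} and Lemma~\ref{lemma-7}, once restricted to $S_F$, becomes an explicit function of $t$ alone.

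First I would locate the normal and tangent directions of $S_F$ inside the frame $\{\partial_r,T,X_1,\dots,X_{n-2}\}$, where $T=\partial_t-\tfrac{r}{2f(t)}\tfrac{\mathrm d}{\mathrm dt}f(t)\,\partial_r$. As $E=\tfrac12F^2$ is $2$-homogeneous, Euler's identity gives $g_x(r\partial_r,\cdot)=\mathrm dE$, so $r\partial_r$ is $g$-normal to $S_F$, while $\mathrm dE(T)=\mathrm dE(X_i)=0$ shows $\{T,X_1,\dots,X_{n-2}\}$ is a $g$-orthogonal basis of $TS_F$ (Lemma~\ref{lemma-3}). A small but useful observation is that $T$ is exactly the coordinate field $\partial_t$ of $S_F$ in the coordinates $(t,\xi)$: writing $c(t,\xi)$ for the point of $S_F$ with spherical coordinates $(t,\xi)$, the relation $r=(2f(t))^{-1/2}$ gives $\tfrac{\mathrm d}{\mathrm dt}c(t,\xi)=\partial_t+r'(t)\partial_r=T$. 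Because $t|_{S_F}$ is killed by each $X_i$ and $g(T,X_i)=0$, the gradient is $\mathrm{grad}_{S_F}t=\tfrac{1}{g(T,T)}T$, whence $g(\mathrm{grad}\,t,\mathrm{grad}\,t)=g(T,T)^{-1}$. Substituting $r^2=\tfrac{1}{2f}$ into Lemma~\ref{lemma-7} gives $g(T,T)|_{S_F}=\tfrac{1}{4f^2}\big(2ff''-(f')^2+4f^2\big)$, a positive function of $t$; this is the transnormality (first) condition.

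The heart of the proof is $\Delta_g t$ on $(S_F,g)$, which I would write as $\mathrm{div}_{S_F}\!\big(\tfrac{1}{g(T,T)}T\big)$ and evaluate in the orthogonal frame, replacing in each term the intrinsic connection of $g|_{S_F}$ by the ambient Levi--Civita connection $\nabla$ of $g$ (legitimate because each term is paired with a tangent vector). The only nonroutine ingredient is $g(\nabla_{X_i}T,X_i)$, and here Lemma~\ref{lemma-1} does the work: since by construction $X_i$ is tangent to the leaves $M_{r,t}$, it annihilates every function of $(r,t)$ and commutes with $\partial_r,\partial_t$, so $[X_i,T]=0$, and metric compatibility alone yields $g(\nabla_{X_i}T,X_i)=\tfrac12\,T\!\left(g(X_i,X_i)\right)$, with no need for the full connection of $g$. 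Assembling the frame divergence gives
\[
\Delta_{S_F}t=-\frac{T(g(T,T))}{2\,g(T,T)^{2}}+\frac{1}{2\,g(T,T)}\sum_i\frac{T(g(X_i,X_i))}{g(X_i,X_i)} .
\]
Now $g(X_i,X_i)|_{S_F}=f_i(t)+\tfrac{1}{4f(t)}f_i'(t)f'(t)$ and $g(T,T)|_{S_F}$ are functions of $t$, and since $T$ restricts to $\partial_t$ on $S_F$ we have $T(\Psi)|_{S_F}=\tfrac{\mathrm d}{\mathrm dt}(\Psi|_{S_F})$; hence $\Delta_{S_F}t$ depends only on $t$ on the regular part, i.e.\ each leaf $N_t$ has constant mean curvature varying only with $t$.

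The hard part is the behaviour at the focal leaves $N_0$ and $N_{\pi/d}$, where some $X_i$ degenerate (Lemma~\ref{lemma-14}) and where $t$ is merely the non-smooth distance coordinate, so $t|_{S_F}$ is not admissible as a global smooth function. To resolve this I would replace $t$ by $P=\cos(d\,t)$, which is the restriction to $S_F$ of the $0$-homogeneous function $x\mapsto\cos(d\,t(x))=\tilde p(x)/|x|^{d}$ built from the degree-$d$ isoparametric (Cartan--M\"unzner) polynomial $\tilde p$ of Section~\ref{subsection-2-2}; this is smooth on all of $\mathbb{R}^n\backslash\{0\}$, hence on $S_F$, and has the same leaves $N_t$ as its level sets. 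Transnormality and the isoparametric property are preserved under the diffeomorphic reparametrization $P=\cos(d\,t)$ of $[0,\tfrac{\pi}{d}]$ onto $[-1,1]$, because $\mathrm{grad}_{S_F}P=-d\sin(dt)\,\mathrm{grad}_{S_F}t$ and $\Delta_{S_F}P=-d\sin(dt)\,\Delta_{S_F}t-d^{2}\cos(dt)\,|\mathrm{grad}_{S_F}t|^{2}$ remain functions of $t$, i.e.\ of $P$. Finally, $|\mathrm{grad}_{S_F}P|^2$ and $\Delta_{S_F}P$ are smooth on the whole indicatrix and coincide with functions of $P$ on the dense regular set, so by continuity they equal $a\circ P$ and $b\circ P$ everywhere. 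Thus $P$ is an isoparametric function on $(S_F,g)$ and $N_t$ is an isoparametric foliation, which is Theorem~\ref{main-thm-2}.
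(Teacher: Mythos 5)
Your proposal is correct, and it follows the paper's overall strategy — show that the transverse coordinate $t$ is isoparametric on $S_F\backslash(N_0\cup N_{\pi/d})$ using the $g$-orthogonal frame $\{\partial_r,T,X_1,\dots,X_{n-2}\}$, then reparametrize to obtain a globally smooth isoparametric function — but two of your steps are executed by genuinely different means. For the Laplacian, the paper computes the \emph{ambient} $\Delta^E t$ via $\mathrm{Hess}(X,X)t=-(\nabla_XX)\cdot t$, which forces it to work out $\nabla_TT$ and $\nabla_{X_i}X_i$ explicitly (its formulas (\ref{0004})--(\ref{0006})) and then to invoke (14.3.10) of \cite{BCS2000} to pass from the ambient Laplacian to the one on the indicatrix; you instead compute $\Delta_{S_F}t=\mathrm{div}_{S_F}\bigl(g(T,T)^{-1}T\bigr)$ intrinsically, and your key identity $g(\nabla_{X_i}T,X_i)=\tfrac12 T(g(X_i,X_i))$ uses only $[X_i,T]=0$ (from Lemma \ref{lemma-1}) and metric compatibility, together with the Gauss-formula remark that tangential components of $\nabla$ and $\nabla^{S_F}$ agree. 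This is more elementary and self-contained, avoiding both the explicit connection computation and the external citation, at the price of having to justify that $T|_{S_F}$ is the coordinate field $\partial_t$ of $(t,\xi)$ on $S_F$ (which you do, and which is consistent with $r=(2f(t))^{-1/2}$). For the focal leaves, the paper glues in a generic $D_{2d}$-symmetric reparametrization $\psi$ and asserts smoothness of $\psi\circ t$; you take the specific choice $P=\cos(dt)$ and identify it with the restriction of the Cartan--M\"unzner polynomial divided by $|x|^d$, which makes the smoothness across $N_0\cup N_{\pi/d}$ concrete, and your continuity argument correctly upgrades the regular-part identities $|\mathrm{grad}\,P|^2=a\circ P$, $\Delta P=b\circ P$ to all of $S_F$. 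I find no gaps; the formulas $g(T,T)|_{S_F}=\tfrac{1}{4f^2}\bigl(2ff''-(f')^2+4f^2\bigr)$ and $g(X_i,X_i)|_{S_F}=f_i+\tfrac{f_i'f'}{4f}$ agree with Lemmas \ref{lemma-3} and \ref{lemma-7} after substituting $r^2=(2f)^{-1}$.
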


\begin{proof}Denote $d$ the number of principal curvatures for
the foliation $M_t$ on $(S^{n-1}(1),g^{\mathrm{st}})$.
The spherical $t$-coordinate can be viewed as a regular smooth function on the conic open subset
$C(S^{n-1}(1)\backslash(M_0\cup
M_{\pi/d})=C(S_F\backslash(N_0\cup N_{\pi/d}))$, which is still denoted as $t$. Its level sets provides the foliation $N_t$.
We will first prove the function
$t|_{S_F\backslash(N_0\cup N_{\pi/d})}$ is isoparametric on $(S_F\backslash(N_0\cup N_{\pi/d}),g)$, where $g$ is the Hessian metric of $F=r\sqrt{2f(t)}$.

Let $\{\partial_r,\partial_t,X_1,\cdots,X_{n-2}\}$ be any spherical local frame induced by $M_t$ and denote $T=\partial_t-\tfrac{r}{2f(t)}\tfrac{{\rm d}}{{\rm d}t}f(t)\partial_r$.
Since $\partial_t \cdot t=1$ and $\partial_r\cdot t=X_i\cdot t=0$, by Lemma \ref{lemma-7},
%i.e., the local frame $\{\partial_r,T,X_1,\cdots,X_{n-2}\}$
%is $g$-orthogonal and
%\begin{equation*}
%g(T,T)=\tfrac{r^2}{2f(t)}\left(2f(t)\tfrac{{\rm d}^2}{{\rm d}t^2}f(t)-\left(\tfrac{{\rm d}}{{\rm d}t}f(t)\right)^2+4f(t)^2\right),
%\end{equation*}
the gradient field $\mathrm{grad}^E t$ on $C(S_F\backslash(N_0\cup N_{\pi/d})$ is
$$\mathrm{grad}^E t=\left(\tfrac{2f(t)}{r^2\left(4f(t)^2
-\left(\tfrac{{\rm d}}{{\rm d}t}f(t)\right)^2+2f(t)
\tfrac{{\rm d}^2}{{\rm d}t^2}f(t)\right)}\right)
\ T,$$ and
its pointwise $g$-norm square is
$$g(\mathrm{grad}^E t,\mathrm{grad}^E t)
={\tfrac{2f(t)}{r^2\left(4f(t)^2
-\left(\tfrac{{\rm d}}{{\rm d}t}f(t)\right)^2+2f(t)
\tfrac{{\rm d}^2}{{\rm d}t^2}f(t)\right)}}.$$
If restricted to $S_F\backslash(N_0\cup N_{\pi/d})$, where $r^2=(2f(t))^{-1}$,
$$g(\mathrm{grad}^E t,\mathrm{grad}^E t)|_{S_F\backslash(N_0\cup N_{\pi/d})}
=\tfrac{4f(t)^2}{{4f(t)^2
-\left(\tfrac{{\rm d}}{{\rm d}t}f(t)\right)^2+2f(t)
\tfrac{{\rm d}^2}{{\rm d}t^2}f(t)}},$$
is a function of $t$.

Denote $\mathrm{Hess}(\cdot,\cdot)$ the Hessian with respect to  $g$ on $\mathbb{R}^n\backslash\{0\}$, then we have
$$\mathrm{Hess}(X,X)t=X\cdot (X\cdot t)-(\nabla_XX)\cdot t=-(\nabla_XX)\cdot t,$$
for each $X\in\{\partial_r,T,X_1,\cdots,X_{n-2}\}$. Here $\nabla$ is
the Levi-Civita connection of $(\mathbb{R}^n\backslash\{0\},g)$.
So the Laplacian $\Delta^E t$ on $(C(S_F\backslash(N_0\cup N_{\pi/d}), g)$ can be presented as
\begin{eqnarray}
\Delta^E t&=&
\tfrac{\mathrm{Hess}(\partial_r,\partial_r)t}{g(\partial_r,\partial_r)}
+\tfrac{\mathrm{Hess}(T,T)t}{g(T,T)}+
\sum_{i=1}^{n-2}
\tfrac{\mathrm{Hess}(X_i,X_i)t}{g(X_i,X_i)}
\nonumber\\
&=&
-\tfrac{(\nabla_{\partial_r}{\partial_r})\cdot
t}{g(\partial_r,\partial_r)}-\tfrac{(\nabla_TT) \cdot t}{g(T,T)}
-\sum_{i=1}^{n-2}\tfrac{(\nabla_{X_i}X_i)\cdot t}{g(X_i,X_i)},\label{0003}
\end{eqnarray}
where the spherical local frame is defined.

Using Lemma \ref{lemma-3}, we collect the following information
for $\nabla$:
\begin{eqnarray}
\nabla_{\partial_r}\partial_r&=&0,\label{0004}\\
\nabla_TT&=&\tfrac{\left(\tfrac{{\rm d}}{{\rm d}t}f(t)\right)^3-2f(t)\tfrac{{\rm d}}{{\rm d}t}f(t)\tfrac{{\rm d}^2}{{\rm d}t^2}f(t)+f(t)^2\tfrac{{\rm d}^3}{{\rm d}t^3}f(t)}{4f(t)^3-f(t)\left(\tfrac{{\rm d}}{{\rm d}t^2}f(t)\right)^2+2f(t)^2\tfrac{{\rm d}^2}{{\rm d}t^2}f(t)}\ \partial_t\ \mbox{(mod}\ \partial_r,X_1,\cdots,X_{n-2}\mbox{)},\label{0005}\\
\nabla_{X_i}X_i &\equiv& \tfrac{\tfrac{{\rm d}}{{\rm d}t}f_i(t)\left(\tfrac{{\rm d}}{{\rm d}t}f(t)\right)^2-4f(t)^2\tfrac{{\rm d}}{{\rm d}t}f_i(t)-f(t)\tfrac{{\rm d}}{{\rm d}t}f_i(t)\tfrac{{\rm d}^2}{{\rm d}t^2}f(t)-f(t)\tfrac{{\rm d}}{{\rm d}t}f(t)\tfrac{{\rm d}^2}{{\rm d}t^2}f_i(t)}{8f(t)^2-2\left(\tfrac{{\rm d}}{{\rm d}t}f(t)\right)^2+4f(t)\tfrac{{\rm d}^2}{{\rm d}t^2}f(t)}\ \partial_t
\nonumber\\
& &\mbox{(mod}\ \partial_r,X_1,\cdots,X_{n-2}\mbox{)}.\label{0006}
\end{eqnarray}

 Input the formula of $g(X_i,X_i)$ in Lemma \ref{lemma-3},  (\ref{0007}) in Lemma \ref{lemma-7}, and (\ref{0004})-(\ref{0006}), into (\ref{0003}),  we see that $\Delta^E t$ is the product of $r^{-2}$ and a function of $t$. In particular, its restriction to $S_F$ (where $r^{-2}=2f(t)$) only depends on the values of $t$.

Finally, we consider the gradient $\mathrm{grad}^S (t|_{S^F\backslash(N_0\cup N_{\pi/d})})$ and the
Laplacian $\Delta^S (t|_{S^F\backslash(N_0\cup N_{\pi/d})})$
on $(S^F\backslash(N_0\cup N_{\pi/d}), g)$. Since the function $t$ is constant along each ray initiating from the origin, $\mathrm{grad}^E t$
is tangent to $S_F$, so we have
$$\mathrm{grad}^S (t|_{S^F\backslash(N_0\cup N_{\pi/d})})=
(\mathrm{grad}^E t) |_{S^F\backslash(N_0\cup N_{\pi/d})}.$$
By (14.3.10) in \cite{BCS2000}, we also have
\begin{eqnarray*}
\Delta^S (t|_{S^F\backslash(N_0\cup N_{\pi/d})})=
(\Delta^E t)|_{S^F\backslash(N_0\cup N_{\pi/d})}.
\end{eqnarray*}
So both $g(\mathrm{grad}^S (t|_{S^F\backslash(N_0\cup N_{\pi/d})}), \mathrm{grad}^S (t|_{S^F\backslash(N_0\cup N_{\pi/d})}))$ and $\Delta^S (t|_{S^F\backslash(N_0\cup N_{\pi/d})})$ are functions on $S_{F}\backslash(N_0\cup N_{\pi/d})$ which only depend on $t$.

To summarize, above argument proves that $t$ is isoparametric on $(S_F\backslash(N_0\cup N_{\pi/d}),g)$. Then we can
choose $\psi(s)$ on $[0,\tfrac{\pi}{d}]$ which satisfies $\tfrac{{\rm d}}{{\rm d}t}\psi(s)>0$ on $(0,\tfrac{\pi}d)$ and can be extended to a smooth function on $\mathbb{R}$ satisfying $\psi(s)=\psi(-s)$ and $\psi(s)=\psi(\tfrac{2\pi}{d}-s)$. Then the composition $\psi\circ (t|_{S_F\backslash(N_0\cup N_{\pi/d})})$ can be extended to a smooth function $p(\cdot)$ on $(S_F,g)$, which level sets provide the foliation $N_t$.
It is easy to verify the isoparametric property of $p(\cdot)$ from that of
$t|_{S_F\backslash(N_0\cup N_{\pi/d})}$. So the foliation
$N_t$ on $(S_F,g|_{S_F})$ is isoparametric.
\end{proof}

By Lemma \ref{lemma-7} and Theorem \ref{main-thm-2}, we see $T=\partial_t-\tfrac{r}{2f(t)}\tfrac{{\rm d}}{{\rm d}t}f(t)\partial_r$ generates the normal geodesics for $N_t$ on $(S_F,g)$, which
have the spherical coordinates presentations $t\mapsto ((2f(t))^{-1/2},t,\xi)$ for any fixed $\xi\in M_0$. So we
have the following corollary.
\begin{corollary}\label{cor-3}
Let $M_t$ be an isoparametric foliation on  $(S^{n-1}(1),g^{\mathrm{st}})$, for which we have induced Minkowski norm $F=r\sqrt{2f(t)}$ with the Hessian metric $g$, and the isoparametric foliation
$N_t=S_F\cap \mathbb{R}_{>0}M_t$ on $(S_F, g)$. Then the intersection with $S_F$ provides
a one-to-one correspondence between the set of all normal planes
for $M_t$ and the set of all unparametrized maximally extended
normal geodesics for $N_t$ in $(S_F,g)$.
\end{corollary}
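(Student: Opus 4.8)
The plan is to build the correspondence explicitly out of the observation preceding the corollary, that $T=\partial_t-\tfrac{r}{2f(t)}\tfrac{{\rm d}}{{\rm d}t}f(t)\partial_r$ generates the normal geodesics for $N_t$ on $(S_F,g)$. First I would fix a normal plane $\mathbf{V}$ for $M_t$ and analyze $S_{\overline{F}}=\mathbf{V}\cap S_F$. By Theorem \ref{main-thm-1}, $\overline{F}=F|_{\mathbf{V}}$ is a Minkowski norm on $\mathbf{V}\cong\mathbb{R}^2$, so $S_{\overline{F}}$ is a smooth closed convex curve. Using the canonical extension of the spherical local frame to $\mathbf{V}\backslash\{0\}$ from Lemma \ref{lemma-14}, together with the fact recorded in the proof of Theorem \ref{main-thm-2} that $\mathrm{grad}^E t$ is a positive multiple of $T$ and is tangent to $S_F$, I would note that $T$ is nonvanishing along $S_{\overline{F}}$, tangent to $S_F$, and $g$-orthogonal to the leaves $N_t$ (whose tangent spaces are $\mathrm{span}\{X_1,\dots,X_{n-2}\}$ by Lemma \ref{lemma-14}) by the $g$-orthogonality in Lemma \ref{lemma-7}. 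Since the velocity of $S_{\overline{F}}$ lies both in $\mathbf{V}$ and in $T S_F$, and $\mathbf{V}\cap T S_F=\mathbb{R}T$, the curve $S_{\overline{F}}$ is (up to reparametrization) the integral curve of $T$ on $S_F$, hence a normal geodesic for $N_t$.

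Next I would address maximality and smooth closure. The normal geodesic segment over $t\in(0,\tfrac{\pi}{d})$ is exactly the curve $t\mapsto((2f(t))^{-1/2},t,\xi)$ of the pre-corollary observation, which lies in $\mathbf{V}$; its $D_{2d}$-images fill out the remaining arcs of $S_{\overline{F}}$, which is smooth everywhere by Theorem \ref{main-thm-1}. Since the geodesic equation for $g|_{S_F}$ holds on $S_F\backslash(N_0\cup N_{\pi/d})$ and $S_{\overline{F}}$ is smooth across the focal set $N_0\cup N_{\pi/d}$, the equation persists there by continuity, so the whole closed curve $S_{\overline{F}}$ is a single normal geodesic for $N_t$; being closed, it is automatically maximally extended.

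For the bijection, injectivity is immediate: $\mathbf{V}=\mathrm{span}(S_{\overline{F}})$ recovers the normal plane from its image. For surjectivity, let $\sigma$ be any maximally extended normal geodesic for $N_t$ in $(S_F,g)$ and choose a regular point $x_0\in\sigma\backslash(N_0\cup N_{\pi/d})$. Its velocity spans the normal line $\mathbb{R}T|_{x_0}$ to $N_t$ inside $T_{x_0}S_F$, so $\mathbf{V}:=\mathrm{span}\{x_0,\dot\sigma(x_0)\}=\mathrm{span}\{\partial_r,\partial_t\}|_{x_0}$ is precisely the orthogonal normal complement of $T_{x_0/|x_0|}M_t$, i.e.\ a normal plane for $M_t$. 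Then $\mathbf{V}\cap S_F$ is, by the first part, a maximally extended normal geodesic for $N_t$ passing through $x_0$ with the same tangent line as $\sigma$ there, so by uniqueness of geodesics the two coincide as unparametrized maximally extended normal geodesics.

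The main obstacle I anticipate is the behaviour at the focal submanifolds $N_0\cup N_{\pi/d}$, where the spherical local frame and the coordinate $t$ degenerate: I must confirm that the geodesic segments parametrized by $t\in(0,\tfrac{\pi}{d})$ glue into one smooth closed geodesic. This rests on the smoothness of $S_{\overline{F}}$ guaranteed by Theorem \ref{main-thm-1} and on the persistence of the geodesic equation across this lower-dimensional set by continuity; everything else is a routine application of the $g$-orthogonality of $\{\partial_r,T,X_1,\dots,X_{n-2}\}$ and of the uniqueness of geodesics from prescribed point and direction.
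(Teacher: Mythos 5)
Your proposal is correct and follows essentially the same route as the paper, which derives the corollary directly from the observation that $T=\partial_t-\tfrac{r}{2f(t)}\tfrac{{\rm d}}{{\rm d}t}f(t)\partial_r$ generates the normal geodesics for $N_t$ with spherical presentation $t\mapsto((2f(t))^{-1/2},t,\xi)$, each lying in a normal plane. The paper states this as an immediate consequence of Lemma \ref{lemma-7} and Theorem \ref{main-thm-2} without further argument; your write-up simply supplies the details (the identification $\mathbf{V}\cap T_xS_F=\mathbb{R}T$, smooth closure across $N_0\cup N_{\pi/d}$, and the injectivity/surjectivity check), all of which are sound.
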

%\begin{eqnarray*}
%\nabla^E t&=&
%\tfrac{\mathrm{Hess(\partial,\partial)t}}{g(\partial_r,\partial)}
%+\tfrac{\mathrm{Hess(T,T)t}}{g(T,T)}
%+\tfrac{\mathrm{Hess()}}
%\end{eqnarray*}

\section{Hessian isometry and Laugwitz conjecture}

\subsection{Hessian isometry and local Hessian isometry}
\label{subsection-4-1}
Let $F_1$ and $F_2$ be two Minkowski norms on $\mathbb{R}^n$
with $n\geq2$
and denote $g_1=g_1(\cdot,\cdot)$ and $g_2=g_2(\cdot,\cdot)$ their Hessian metrics respectively.
A diffeomorphism $\Phi$ on $\mathbb{R}^n\backslash\{0\}$ is called
a {\it Hessian isometry} from $F_1$ to $F_2$, if it is an isometry from $g_1$ to $g_2$.

Since the rays initiating from the origin provide the set of all incomplete geodesics on $(\mathbb{R}^n\backslash\{0\},g_i)$, and
the Hessian metric $g_i$ on $\mathbb{R}^n\backslash\{0\}$
has the presentation $g_i=({\rm d}F_i)^2+F_i^2 (g_i|_{S_{F_i}})$,  we have the following easy lemma.
\begin{lemma}\label{lemma-4}
Any Hessian isometry $\Phi$ from $F_1$ to $F_2$ maps the indicatrix $S_{F_1}$ to the indicatrix $S_{F_2}$, and it is positively 1-homogeneous, i.e.,
$\Phi(\lambda x)=\lambda\Phi(x)$ for any $\lambda>0$ and any $x\in\mathbb{R}^n\backslash\{0\}$. Conversely, any isometry between $(S_{F_i},g_i)$
can be uniquely extended to a Hessian isometry from $F_1$ to $F_2$.
\end{lemma}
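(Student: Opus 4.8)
The plan is to run everything through the two structural facts just recorded: the rays $\mathbb{R}_{>0}x$ are exactly the incomplete geodesics of $(\mathbb{R}^n\backslash\{0\},g_i)$, and $g_i$ admits the cone presentation $g_i=({\rm d}F_i)^2+F_i^2(g_i|_{S_{F_i}})$. For the forward direction, I would first note that an isometry $\Phi$ from $g_1$ to $g_2$ carries maximal geodesics to maximal geodesics and preserves geodesic completeness, hence it maps each ray to a ray. Fixing $x\neq0$ and writing $\ell_x=\mathbb{R}_{>0}x$, the cone presentation gives $g_1|_{\ell_x}=({\rm d}F_1)^2$, so $F_1$ is a unit-speed arc-length coordinate identifying $(\ell_x,g_1)$ with $((0,\infty),{\rm d}u^2)$ via $u=F_1$, the incomplete end being $u\to0^+$; likewise the image ray is identified with $((0,\infty),{\rm d}v^2)$ via $v=F_2$. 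Since the only isometry of $((0,\infty),{\rm d}u^2)$ onto $((0,\infty),{\rm d}v^2)$ that sends the incomplete end to the incomplete end is $v=u$, I conclude $F_2\circ\Phi=F_1$.

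From $F_2\circ\Phi=F_1$ the remaining forward claims drop out. Indeed, for $t>0$ the points $tx$ and $x$ lie on $\ell_x$, so their images lie on $\ell_{\Phi(x)}$, giving $\Phi(tx)=\lambda(t)\Phi(x)$ for some $\lambda(t)>0$; applying $F_2$ and using $F_2\circ\Phi=F_1$ yields $\lambda(t)F_1(x)=F_1(tx)=tF_1(x)$, hence $\lambda(t)=t$ and $\Phi(tx)=t\Phi(x)$, which is the asserted positive $1$-homogeneity. Taking $x\in S_{F_1}$ and using $F_2(\Phi(x))=F_1(x)=1$ shows $\Phi(S_{F_1})\subseteq S_{F_2}$, with equality by applying the same argument to $\Phi^{-1}$. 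For the converse, given an isometry $\phi$ of $(S_{F_1},g_1|_{S_{F_1}})$ onto $(S_{F_2},g_2|_{S_{F_2}})$, I would define the $1$-homogeneous extension $\Phi(x)=F_1(x)\,\phi\!\left(x/F_1(x)\right)$ for $x\neq0$. This is smooth and a diffeomorphism (its inverse is built the same way from $\phi^{-1}$), restricts to $\phi$ on $S_{F_1}$, and satisfies $F_2\circ\Phi=F_1$ because $\phi$ is $S_{F_2}$-valued. In product coordinates adapted to the cone presentation $\Phi$ reads $(\rho,p)\mapsto(\rho,\phi(p))$, so
$$\Phi^*g_2=\Phi^*\big(({\rm d}F_2)^2+F_2^2(g_2|_{S_{F_2}})\big)=({\rm d}F_1)^2+F_1^2\,\phi^*(g_2|_{S_{F_2}})=({\rm d}F_1)^2+F_1^2(g_1|_{S_{F_1}})=g_1,$$
using $F_2\circ\Phi=F_1$ and that $\phi$ is an isometry; hence $\Phi$ is a Hessian isometry. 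Uniqueness is immediate, since by the forward direction any Hessian isometry extending $\phi$ is $1$-homogeneous and therefore determined on all of $\mathbb{R}^n\backslash\{0\}$ by its restriction to $S_{F_1}$.

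The hard part is the radial rigidity in the forward direction: one must know that the incomplete geodesics are precisely the rays and that $F_i$ is their arc-length parameter, so that an isometry is pinned to respect the cone's radial coordinate. Both facts are already supplied by the cone presentation of $g_i$ and the geodesic description quoted before the statement, so once these are invoked the argument becomes routine; the only genuine care needed is the orientation and end bookkeeping for the one-dimensional isometries, ensuring that the incomplete end $r\to0^+$ is carried to the incomplete end, which is what rules out every isometry of $(0,\infty)$ except the identity and thereby forces $F_2\circ\Phi=F_1$.
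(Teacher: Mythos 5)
Your proposal is correct and follows exactly the route the paper intends: the paper offers no written proof beyond citing the two facts you invoke (rays are precisely the incomplete geodesics, and $g_i=({\rm d}F_i)^2+F_i^2(g_i|_{S_{F_i}})$), and your argument is the natural completion of that sketch. The radial-rigidity step forcing $F_2\circ\Phi=F_1$, the resulting $1$-homogeneity, and the cone-coordinate extension of an indicatrix isometry are all handled correctly.
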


Besides the global Hessian isometry, a {\it local Hessian isometry}
can be defined as an isometric diffeomorphism between two conic open subsets in $(\mathbb{R}^n\backslash\{0\},g_i)$ respectively, satisfying the positive 1-homogeneity. An analog of Lemma
\ref{lemma-4} is valid for local Hessian isometries, i.e., the restriction to indicatrix provides a one-to-one correspondence between local Hessian isometries from $F_1$ to $F_2$ and local isometries from $(S_{F_1},g_1)$ to $(S_{F_2},g_2)$.

\subsection{Linear isometry and two applications}
\label{subsection-4-2}
Any linear isomorphism $\Phi:(\mathbb{R}^n,F_1)\rightarrow(\mathbb{R}^n,F_2)$ which
satisfies $F_1=F_2\circ\Phi$ naturally induces a Hessian isometry
when it is restricted to $\mathbb{R}^n\backslash\{0\}$. For simplicity,
we call it a {\it linear isometry}.

Here
we propose two applications of linear isometries for
 the Minkowski norm $F=r\sqrt{2f(t)}$ induced by an isoparametric foliation $M_t$ on $(S^{n-1}(1),g^{\mathrm{st}})$.

Firstly, when the foliation $M_t$ on $(S^{n-1}(1),g^{\mathrm{st}})$ is homogeneous,
i.e., there exists a compact connected Lie subgroup $G$ of $SO(n)$, such that each $M_t$ is a $G$-orbit,
then the following lemma shows us a shortcut to Theorem \ref{main-thm-2}.

\begin{lemma}\label{lemma-5}
Let $M_t$ be a homogeneous isoparametric foliation on $(S^{n-1}(1),g^{\mathrm{st}}  )$, and $F=r\sqrt{2f(t)}$ a Minkowski norm induced by $M_t$. Then the foliation $N_t=S_F\cap \mathbb{R}_{>0}M_t$ on $(S_F,g)$
is also a homogeneous isoparametric foliation.
\end{lemma}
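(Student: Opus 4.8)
The plan is to promote the cohomogeneity-one symmetry of $M_t$ to the indicatrix by exploiting the $G$-invariance of $F$. Since $M_t$ is homogeneous, fix a compact connected subgroup $G\subset SO(n)$ with each $M_t$ a $G$-orbit. The induced norm $F=r\sqrt{2f(t)}$ is a function of the radius $r=|x|$ and the spherical $t$-coordinate alone; the former is $G$-invariant because $G\subset SO(n)$, and the latter is constant on each orbit $M_t$ and hence $G$-invariant as well. Therefore $F\circ g=F$ for every $g\in G$, i.e.\ each $g\in G$ is a linear isometry from $F$ to itself in the sense of Section \ref{subsection-4-2}.

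Next I would invoke the observation of Section \ref{subsection-4-2} that a linear isometry restricts to a Hessian isometry of $(\mathbb{R}^n\backslash\{0\},g)$. Thus $G$ acts on $(\mathbb{R}^n\backslash\{0\},g)$ by isometries, and since this action preserves $F$ it preserves the level set $S_F$; by Lemma \ref{lemma-4} we obtain an isometric $G$-action on $(S_F,g)$. It then remains to identify the orbits of this action with the leaves $N_t$.

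For the orbit identification, note that $G$ maps each $M_t$ to itself (being a single orbit, including the focal submanifolds $M_0$ and $M_{\pi/d}$), commutes with scaling (as $G\subset SO(n)$), and preserves $S_F$; hence it preserves each $N_t=S_F\cap\mathbb{R}_{>0}M_t$. Conversely, given $x,y\in N_t$ we have $x/|x|,\,y/|y|\in M_t$, so transitivity of $G$ on $M_t$ provides $g\in G$ with $g(x/|x|)=y/|y|$; since $g$ preserves $|\cdot|$ it follows that $gx=(|x|/|y|)y$, and comparing $F$-values (using $F(gx)=F(x)=F(y)=1$) forces $|x|=|y|$, whence $gx=y$. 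Thus each $N_t$ is exactly one $G$-orbit, so the orbit decomposition of the isometric $G$-action on $(S_F,g)$ is precisely the foliation $\{N_t\}$. Since the $G$-invariant coordinate $t$ descends to a parametrization of the orbit space $S_F/G$ by $[0,\tfrac{\pi}d]$, the action has cohomogeneity one and the leaves $N_t$ with $t\in(0,\tfrac{\pi}d)$ are its principal (codimension-one) orbits; by the definition of homogeneous isoparametric foliation recalled in Section 2, $\{N_t\}$ is a homogeneous isoparametric foliation on $(S_F,g)$.

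The argument is mostly bookkeeping, and the one point needing genuine care is the passage from linear isometry to Hessian isometry, namely that a $g^{\mathrm{st}}$-orthogonal linear map preserving $F$ is automatically an isometry of the Hessian metric $g=\mathrm{d}^2E$; this, together with the $G$-equivariance of the radial structure used to upgrade transitivity on $M_t$ to transitivity on each $N_t$ (including the singular leaves), carries the whole proof.
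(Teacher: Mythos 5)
Your proposal is correct and follows essentially the same route as the paper: establish $G$-invariance of $F$, upgrade the linear $G$-action to an isometric cohomogeneity-one action on $(S_F,g)$ with orbits $N_t$, and invoke the theory of cohomogeneity-one Riemannian manifolds to produce the isoparametric function. You merely spell out the orbit identification and the positive-homogeneity argument for $|x|=|y|$ in more detail than the paper's terse proof does.
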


\begin{proof}Let $G$ be the compact connected Lie subgroup of $SO(n)$ such that each $M_t$ is a $G$-orbit. Then the induced Minkowski norm $F=r\sqrt{2f(t)}$ is $G$-invariant. So the $G$-action
on $(\mathbb{R}^n,F)$ is linearly isometric and it is of cohomogeneity
one when restricted to $(S^F,g)$. Each $N_t=S_F\cap
\mathbb{R}_{>0}M_t$ is $G$-orbit, and singular $G$-orbits only appear at the two ends, i.e., $N_0$ and $N_{{\pi}/d}$. By the theory of Riemannian manifold of cohomogeneity one, the corresponding isoparametric function can be constructed.
\end{proof}

Theorem \ref{main-thm-2} is a direct corollary of
Lemma \ref{lemma-5}
when we have $d\in\{1,2,3,6\}$ for the principal curvatures of $M_t$, and for some
subcases with $d=4$.

Secondly, we prove Laugwitz Conjecture in a special case.
Laugwitz conjectured that if the Hessian metric $g$ of a Minkowski norm $F$ is flat on $\mathbb{R}^n\backslash\{0\}$ with $n\geq 3$,
then $F$ is Euclidean \cite{La1965}. When $F$ is reversible, i.e., $F(x)=F(-x)$, $\forall x\in\mathbb{R}^n$, or equivalently, when $F$ is absolutely 1-homogeneous, i.e., $F(\lambda x)=|\lambda| F(x)$, $\forall \lambda\in\mathbb{R},x\in\mathbb{R}^n$, F. Brickell proved this conjecture by the following theorem \cite{Br1967,Sc1968}.
\begin{theorem} \label{cite-theorem-Brickell}
Let $F$ be a Minkowski norm on $\mathbb{R}^n$ with $n\geq3$, satisfying the
reversibility condition, i.e., $F(x)=F(-x)$, $\forall x\in\mathbb{R}^n$.
If its Hessian metric $g$ is flat on $\mathbb{R}^n\backslash\{0\}$, then $F$ is
Euclidean.
\end{theorem}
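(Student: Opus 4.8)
The plan is to reduce the statement to a rigidity property of the indicatrix, and then to the vanishing of the Cartan tensor. First I would exploit the cone structure of the Hessian metric recorded in Section \ref{subsection-4-1}: on $\mathbb{R}^n\backslash\{0\}$ one has $g=({\rm d}F)^2+F^2(g|_{S_F})$, so $(\mathbb{R}^n\backslash\{0\},g)$ is a metric cone over its indicatrix $(S_F,h)$ with $h=g|_{S_F}$, the radial coordinate being $F$ itself. A metric cone ${\rm d}r^2+r^2 h$ is flat if and only if its link $(S_F,h)$ has constant sectional curvature $1$; this is exactly where the hypothesis $n\geq3$ is used, since $\dim S_F=n-1\geq2$ makes constant curvature a genuine condition (for $n=2$ every cone is flat). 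Because $S_F$ bounds a convex body it is a topological sphere, hence compact and, as $n-1\geq2$, simply connected; a complete simply connected space form of curvature $1$ is isometric to the round sphere, so flatness of $g$ forces $(S_F,h)\cong(S^{n-1}(1),g^{\mathrm{st}})$. Extending the isometry radially then produces a positively $1$-homogeneous Hessian isometry from $F$ to the Euclidean norm.

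This last observation already shows that flatness alone cannot finish the argument: the bulk of this paper is devoted to nonlinear Hessian isometries, so I cannot conclude that $F$ is Euclidean without using reversibility. To bring in the analytic content I would instead compute the curvature of $g$ directly. Writing $E=\tfrac12F^2$, the Cartan tensor is $C_{ijk}=\tfrac14\partial_i\partial_j\partial_k(F^2)=\tfrac12\partial_k g_{ij}$, and the Levi-Civita Christoffel symbols of $g$ are $\Gamma^k_{ij}=g^{kl}C_{ijl}$. Substituting these into the curvature formula, the terms carrying the fourth derivatives of $E$ cancel, because $\partial_i\partial_j\partial_k\partial_l E$ is totally symmetric; what survives is a universal expression quadratic in $C$ and in $g^{-1}$. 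Thus flatness $R\equiv0$ becomes a purely algebraic identity on the Cartan tensor, which together with the homogeneity relation $x^k C_{ijk}=0$ severely constrains $C$.

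The hard part is to pass from this pointwise identity to $C\equiv0$, and this is exactly where reversibility is indispensable. My plan is to contract the flatness identity into a scalar (for instance to use $\mathrm{Ric}\equiv0$) and integrate it over the compact indicatrix $(S_F,h)$: the divergence terms drop since $S_F$ has no boundary, and the cross terms that obstruct positivity are odd in $x$, so the central symmetry $F(x)=F(-x)$ (equivalently, invariance of $g$ under $x\mapsto-x$) makes their integral vanish. What should remain is an identity of the shape $\int_{S_F}\|C\|^2\,{\rm d}V_h=0$, forcing $C\equiv0$, i.e. $\partial^3(F^2)\equiv0$, so that $E$ is a quadratic form and $F$ is Euclidean. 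An equivalent and perhaps cleaner endgame is to show that flatness together with reversibility forces the mean Cartan torsion $I_k=g^{ij}C_{ijk}$ to vanish and then to invoke Deicke's theorem. Either way, the essential difficulty, and the precise place where reversibility is used (and hence what separates this from the open general Laugwitz conjecture, as well as from Theorem \ref{main-thm-3}, where reversibility is dropped at the cost of the isoparametric hypothesis), is in arranging the integral identity so that its integrand is sign-definite.
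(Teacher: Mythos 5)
First, a point of reference: the paper does not prove this statement at all — it is Brickell's theorem, quoted verbatim from \cite{Br1967} (see also \cite{Sc1968}) and used as a black box in the proof of Theorem \ref{main-thm-3}. So your proposal can only be judged on its own merits, not against an in-paper argument.

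Your first two paragraphs are sound and correctly locate the structure of the problem: the cone presentation $g=({\rm d}F)^2+F^2(g|_{S_F})$ shows that flatness is equivalent to $(S_F,g|_{S_F})$ having constant curvature $1$ (vacuous when $n=2$, which is why $n\geq3$ matters); a Hessian isometry to the Euclidean norm is not enough to conclude; and for a Hessian metric the fourth derivatives of $E$ cancel in the curvature, leaving $R_{ijkl}=C_{ikm}C^{m}_{\ jl}-C_{ilm}C^{m}_{\ jk}$ with $C_{ijk}=\tfrac12\partial_i\partial_j\partial_kE$. The genuine gap is in the third paragraph, and it is not a small one — it is the entire content of Brickell's theorem. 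Because the flatness identity is purely algebraic in $C$ (zeroth order, no derivatives of $C$ survive), contracting it and integrating over $S_F$ produces no divergence terms to discard and no odd-in-$x$ cross terms for reversibility to kill: the trace of $\mathrm{Ric}=0$ gives only the \emph{pointwise} identity $\|C\|^2=\|I\|^2$, where $I_k=g^{ij}C_{ijk}$, and this is not sign-definite — it is satisfied by many nonzero algebraic tensors, and indeed by every two-dimensional non-Euclidean reversible norm, so no purely algebraic manipulation of it can yield $C\equiv0$. Your proposal explicitly defers "arranging the integral identity so that its integrand is sign-definite," but that is precisely the theorem. The actual arguments of Brickell and Schneider exploit that flatness forces the symmetric operators $A_k=(g^{jm}C_{ikm})$ to commute pairwise (simultaneous diagonalizability), combined with $x^kC_{ijk}=0$ and a global argument on the compact indicatrix (a maximum-principle or integral-geometric step in the spirit of Deicke's theorem) in which the antipodal symmetry of $S_F$ enters in an essential and specific way; none of that machinery is supplied or even gestured at concretely in the proposal. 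As written, the proof stops exactly where the difficulty begins.
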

Recently, we proved the following theorem in \cite{XM2020}.
\begin{theorem}\label{cite-theorem-XM-2020}
Laugwitz conjecture is true for the class of Minkowski norms
which are invariant with respect to the standard block diagonal
$SO(n-1)$-action.
\end{theorem}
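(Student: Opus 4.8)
The plan is to reinterpret the hypothesis geometrically and then collapse the flatness condition to a tractable system of ordinary differential equations using the rotational symmetry. A Minkowski norm invariant under the block diagonal $SO(n-1)$-action depends only on $x_1$ and $(x_2^2+\cdots+x_n^2)^{1/2}$, and $SO(n-1)$ acts transitively on each latitude sphere $\{x_1=\cos t\}\cap S^{n-1}(1)$, so such a norm is constant on those spheres and is therefore exactly a norm induced by the $d=1$ isoparametric foliation $M_t$; write it as $F=r\sqrt{2f(t)}$ with $t\in[0,\pi]$ and $f$ positive smooth, even in $t$ and $2\pi$-periodic (hence even about $t=\pi$ as well). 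As recorded in Section \ref{subsection-4-1}, on $\mathbb{R}^n\backslash\{0\}$ the Hessian metric splits as $g=({\rm d}F)^2+F^2(g|_{S_F})$, i.e. $g$ is the metric cone over $(S_F,g|_{S_F})$ whose radial geodesics are the rays from the origin. Since the radial sectional curvatures of a metric cone vanish identically and $\dim S_F=n-1\geq2$, flatness of $g$ is equivalent to $(S_F,g|_{S_F})$ having constant sectional curvature $+1$. Thus the theorem becomes the rigidity statement: among $SO(n-1)$-invariant Minkowski norms, an indicatrix of constant curvature $1$ forces $F$ to be Euclidean.

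Next I would compute these tangential curvatures in the spherical local frame. For $d=1$ every principal-curvature index satisfies $k_i=0$, so $f_i(t)=a_i\sin^2 t$ and the orbit directions $X_1,\dots,X_{n-2}$ are mutually symmetric; by Lemma \ref{lemma-3} and Lemma \ref{lemma-7} the frame $\{\partial_r,T,X_1,\dots,X_{n-2}\}$ is $g$-orthogonal, with $T=\partial_t-\tfrac{r}{2f(t)}\tfrac{{\rm d}}{{\rm d}t}f(t)\partial_r$ tangent to the normal geodesics of $N_t$ and the $X_i$ tangent to the orbits $N_t$. Passing from the connection $\tilde{\nabla}$ of Lemma \ref{lemma-2} to the Levi-Civita connection $\nabla$ of $g$ (the connection computations already performed for Theorem \ref{main-thm-2}, namely (\ref{0004})--(\ref{0006}), supply part of the needed Christoffel data, the remainder being analogous), I would evaluate the two essentially different tangential sectional curvatures $K(T,X_i)$ and $K(X_i,X_j)$ on $S_F$, where $r^2=(2f(t))^{-1}$. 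Setting both identically equal to $1$ yields a closed ODE system for $f(t)$ on $(0,\pi)$; by the $SO(n-1)$-symmetry the equations $K(X_i,X_j)=1$ (present only for $n\geq4$) are all equivalent to one another, and $g$ restricts to each orbit as a round metric, so the system reduces to at most two scalar ODEs in $f$.

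Finally I would integrate this system. The expected outcome is that the only positive smooth $\mathbb{Z}_2$-invariant solutions satisfying the convexity inequality of Theorem \ref{main-thm-1} are $f(t)=\tfrac12\bigl(A+(B-A)\cos^2 t\bigr)$ with $A,B>0$; these are precisely the $f$ for which $2f(t)$ is, along each ray, a positive definite $SO(n-1)$-invariant quadratic form in $x/r$, i.e. $F$ is Euclidean. The main obstacle is exactly this last step: solving the curvature ODE system and excluding non-Euclidean solutions. It is delicate for two reasons. First, the orbit directions $X_i$ degenerate at the focal sets $t=0,\pi$ where the latitude spheres collapse, so the curvature identities must be reconciled with the even-extension and smoothness conditions there — the same mechanism used in the proof of Theorem \ref{main-thm-1} — in order to select the geometrically admissible solutions. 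Second, although for $n\geq4$ the extra equations $K(X_i,X_j)=1$ strongly over-determine and rigidify $f$, for $n=3$ one is left with the single Gauss-curvature equation $K(T,X_1)=1$, and the rigidity must be extracted from it together with the two parity conditions alone; the hypothesis $n\geq3$ is essential throughout, since for $n=2$ the cone reduction is vacuous and every admissible $f$ occurs.
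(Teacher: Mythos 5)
You have proposed a proof for a statement that this paper does not actually prove: Theorem \ref{cite-theorem-XM-2020} is imported verbatim from the reference \cite{XM2020} (``Recently, we proved the following theorem in \cite{XM2020}''), so there is no in-paper argument to compare yours against. Your reduction steps are sound and in the spirit of that cited work: $SO(n-1)$-invariant norms are exactly those induced by the $d=1$ foliation; the presentation $g=({\rm d}F)^2+F^2(g|_{S_F})$ from Section \ref{subsection-4-1} exhibits $g$ as the metric cone over $(S_F,g|_{S_F})$; and for $n\geq3$ flatness of the cone is equivalent to the link having constant curvature $1$. The identification of the Euclidean solutions as $f(t)=\tfrac12\bigl(A+(B-A)\cos^2 t\bigr)$ is also correct.

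The genuine gap is that the step which \emph{is} the theorem — showing that the curvature ODE system for $f$ admits no non-Euclidean solutions among positive, smooth, even, strongly convex $f$ — is only announced as ``the expected outcome,'' with the difficulties acknowledged but not resolved. Reducing Laugwitz's conjecture to ``the indicatrix has constant curvature $+1$'' is essentially free and does not by itself rigidify $F$: for $n=3$ the single Gauss-curvature equation $K(T,X_1)=1$ has a large local solution family (the spindle-type constant-curvature surfaces of revolution), and these are excluded only by a careful analysis of the parity and smoothness conditions at the focal values $t=0,\pi$ together with the strong convexity inequality (\ref{eq:convexity-requirement}); you point at this mechanism but do not execute it. Likewise for $n\geq4$ you assert that the extra equations $K(X_i,X_j)=1$ ``strongly over-determine and rigidify $f$'' without exhibiting the equations or the integration. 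Until the ODE system is written down explicitly and the exclusion of non-Euclidean solutions is carried out, the proposal is a reduction scheme rather than a proof, and the statement must continue to rest on the external reference \cite{XM2020}.
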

Now, we further strengthen it as following.
\begin{theorem}\label{main-thm-3}
Laugwitz conjecture is true for Minkowski norms induced
by an isoparametric foliation on the unit sphere.
\end{theorem}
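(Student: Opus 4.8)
The plan is to convert the flatness hypothesis into a rigidity statement about the cross-section $(S_F,\tilde g)$, where $\tilde g=g|_{S_F}$, and then to feed that back into the explicit description of $f(t)$. On $\mathbb{R}^n\backslash\{0\}$ the Hessian metric splits as a metric cone $g=(\mathrm dF)^2+F^2\tilde g$ with radial coordinate $F$ (see Section \ref{subsection-4-1}), and the standard cone curvature identity says that such a cone is flat if and only if its cross-section has constant sectional curvature $+1$. Since $n>2$, the indicatrix $S_F$ is a compact, simply connected hypersurface, so $(S_F,\tilde g)$ is complete, simply connected and of constant curvature $1$, hence isometric to the round unit sphere $(S^{n-1}(1),g^{\mathrm{st}})$. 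This is the structural input I would extract first.

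Next I would record a cheap consequence that already disposes of most cases. For $d$ even, the rotation by $\pi$ of a normal plane (that is, $-\mathrm{id}$) equals the rotation by $2\pi(d/2)/d$ and so lies in the rotation subgroup of $D_{2d}$ described in Section \ref{subsection-2-4}; hence the restriction $\overline F=F|_{\mathbf V}$ is $(-\mathrm{id})$-invariant on every normal plane $\mathbf V$. As every nonzero vector lies in some normal plane (by continuity also those on the focal cones), $F$ is reversible, and Brickell's theorem (Theorem \ref{cite-theorem-Brickell}) immediately forces $F$ to be Euclidean. The case $d=1$ is the $(\alpha,\beta)$-case, covered by Theorem \ref{cite-theorem-XM-2020}. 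Thus the whole difficulty concentrates in the single residual value $d=3$, where $-\mathrm{id}\notin D_{6}$ (it interchanges the two focal submanifolds), so $F$ need not be reversible and is not of $(\alpha,\beta)$-type.

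For $d=3$ I would exploit Theorem \ref{main-thm-2}: the foliation $N_t=S_F\cap\mathbb{R}_{>0}M_t$ is isoparametric on $(S_F,\tilde g)$, which we now know is the round sphere, and the $2d$-gon structure forces it to have $d=3$ principal curvatures. Working in a normal plane $\mathbf V$, the curve $c(t)=S_F\cap\mathbf V$ is a normal geodesic of $N_t$, hence a great circle; let $s(t)=\int_0^t\sqrt{g(T,T)}\,\mathrm d\tau$ be its $\tilde g$-arclength, computed from $g(T,T)|_{S_F}$ in Lemma \ref{lemma-7} (with $r^2=(2f)^{-1}$). On the round sphere the leaf at arclength $s$ must have leaf-metric coefficients proportional to $\sin^2(s+k_i\pi/d)$ and principal curvatures $\cot(s+k_i\pi/d)$. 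Comparing this with the explicit $g(X_i,X_i)|_{S_F}$ of (\ref{0037}) and $f_i(t)=a_i\sin^2(t+k_i\pi/d)$ of (\ref{formula-X_i^2}), simultaneously for the three residue classes $k_i\in\{0,1,2\}$, yields a heavily over-determined system for $f$ and $s(t)$. I expect it to force $s(t)=t$ and $2f\tfrac{\mathrm d^2}{\mathrm dt^2}f=(\tfrac{\mathrm d}{\mathrm dt}f)^2$; together with the $D_6$-invariance and positivity of $f$, the only solution is $f\equiv\mathrm{const}$, i.e. $F=c\,|\cdot|$ is Euclidean. Since all $d=3$ foliations are homogeneous, one may alternatively run a linear-isometry and cohomogeneity-one rigidity argument in the spirit of Lemma \ref{lemma-5}.

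The main obstacle is exactly this $d=3$ matching: showing that the constant-curvature-$1$ constraint, transported through the shape operators of $N_t$, leaves no room beyond $f\equiv\mathrm{const}$. The bookkeeping is delicate because $t$ is not the $\tilde g$-arclength, so the reparametrization $s(t)$ must be solved for in tandem with $f$, and one genuinely must use more than one residue class $k_i$ to break the ambiguity that a single class — as in the $(\alpha,\beta)$-analysis of \cite{XM2020} — would leave unresolved.
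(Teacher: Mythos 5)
Your treatment of $d$ even (reversibility via $-\mathrm{id}\in D_{2d}$ on every normal plane, then Brickell) and of $d=1$ (Theorem \ref{cite-theorem-XM-2020}) coincides with the paper's Cases 1 and 2, so the whole weight of the theorem rests on $d=3$ --- and there your argument, while a genuinely different strategy from the paper's, is not yet a proof. The decisive step is announced as ``I expect it to force $s(t)=t$ and \dots\ $f\equiv\mathrm{const}$'' and is never carried out; you yourself identify it as the main obstacle. To close it you would need: (i) that each $X_i$, restricted to $S_F$ and viewed along a unit-speed normal geodesic of $N_t$ in the round $(S_F,g)$, is a normal Jacobi field vanishing at a focal leaf (this follows from $[T,X_i]=0$, hence $[\partial_s,X_i]=0$ for the arclength reparametrization, plus $g(T,X_i)=0$), so that $g(X_i,X_i)|_{S_F}=b_i\sin^2(s(t)+k_i\pi/3)$ with the phase pinned down by the vanishing locus; (ii) that $s(0)=0$ and $s(\tfrac{\pi}{3})=\tfrac{\pi}{3}$ from the M\"unzner focal distance; and (iii) the actual resolution of the system. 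For what it is worth, the system does close: writing the three equations as $a_k\sin^2(t+\tfrac{k\pi}{3})+\tfrac{\dot f}{4f}\,a_k\sin(2t+\tfrac{2k\pi}{3})=b_k\sin^2(s+\tfrac{k\pi}{3})$ and summing over $k=0,1,2$ forces $b_k=a_k$; the three equations are then equivalent to the single complex identity $e^{2is}=\bigl(1+i\tfrac{\dot f}{2f}\bigr)e^{2it}$, and taking moduli gives $\dot f\equiv0$. So your route is viable, but as submitted the heart of the proof is missing.

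You should also be aware that the paper handles $d=3$ by a completely different and more elementary reduction, which avoids both the cone-curvature identification of $(S_F,g)$ with the round sphere and any M\"unzner-type rigidity. It writes down Cartan's explicit cubic isoparametric polynomial on $\mathbb{R}^n=\mathbb{R}^2\oplus\mathbb{F}^3$, notes that $(a,b,x,y,z)\mapsto(a,b,-x,-y,z)$ is a linear isometry of $(\mathbb{R}^n,F)$ preserving the foliation, and that its fixed-point set $\mathbb{R}^{n'}$ is totally geodesic in the flat $(\mathbb{R}^n\backslash\{0\},g)$; hence $F'=F|_{\mathbb{R}^{n'}}$ is an $SO(n'-1)$-invariant norm with flat Hessian metric and is Euclidean by Theorem \ref{cite-theorem-XM-2020}, giving $f(t)=c_1+c_2\cos 2t$. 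Since $f$ must also factor through $p|_{S^{n'-1}(1)}=\cos 3t$, whose derivative vanishes at $t=\tfrac{\pi}{3}$, one gets $c_2=0$ and $f$ constant. That reduction to the already-proved $d=1$ case is shorter; your approach, if completed along the lines above, would be more uniform in that it never invokes the explicit Cartan polynomials.
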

\begin{proof}Let $M_t$ be an isoparametric foliation on
$(S^{n-1}(1),g^{\mathrm{st}})$ with $n>2$ and $d$ the number of principal curvatures of $M_t$. Suppose $F=r\sqrt{2f(t)}$ is
a Minkowski norm induced by $M_t$, with a flat Hessian metric $g$
on $\mathbb{R}^n\backslash\{0\}$.

{\bf Case 1}. We have $d\in\{2,4,6\}$. In this case, we only need to prove that $F$ is reversible, then Theorem \ref{main-thm-3} follows from Theorem \ref{cite-theorem-Brickell} immediately.

To prove our claim, we consider any $x\in S^{n-1}(1)$. Then
there always exists a normal plane $\mathbf{V}$ for $M_t$, which contains $x$. The antipodal map on $\mathbf{V}$ is contained in $D_{2d}$ when $d$ is even. So the $D_{2d}$-invariancy of $\overline{F}=F|_{\mathbf{V}}$ implies $F(x)=F(-x)$.

{\bf Case 2}. We have $d=1$.
This case has already been proved by Theorem \ref{cite-theorem-XM-2020}.

{\bf Case 3}. We have $d=3$. In this case, E. Cartan found the following explicit construction \cite{Ca1939,Ca1940}. We present $\mathbb{R}^n$ with $n\in\{5,8,14,26\}$ as
$$\mathbb{R}^n=\mathbb{R}^2\oplus\mathbb{F}^3=\{(a,b,x,y,z)| \forall a,b\in\mathbb{R},\ x,y,z\in\mathbb{F}\}$$
with $\mathbb{F}\in\{\mathbb{R},\mathbb{C},\mathbb{H},\mathbb{O}\}$
respectively, such that
the standard Euclidean norm is given by
$$|(a,b,x,y,z)|=\sqrt{a^2+b^2+x\overline{x}+
y\overline{y}+z\overline{z}}.$$
Then isoparametric function for the foliation $M_t$ can be chosen as
\begin{eqnarray*}
p&=&a^3-3ab^2+
\tfrac{3a}{2}(x\overline{x}+y\overline{y}-2z\overline{z})
+\tfrac{3\sqrt{3}b}{2}(x\overline{x}-y\overline{y})
+\tfrac{3\sqrt{3}}{2}\left((xy)z+\overline{(xy)z}\right),
\end{eqnarray*}
where $a^2+b^2+x\overline{x}+y\overline{y}+z\overline{z}=1$.

Both the standard Euclidean norm and the function $p(\cdot)$ are invariant for the action of  $\Phi(a,b,x,y,z)=(a,b,-x,-y,z)$, So $\Phi$ is a linear isometry on $(\mathbb{R}^n,F)$. The fixed point set of $\Phi$ is
the Euclidean subspace $\mathbb{R}^{n'}=\{(a,b,0,0,z)| \forall a,b\in\mathbb{R}, z\in\mathbb{F}\}$ with dimension $n'=3,4,6,10$
when $n=5,8,14,26$ respectively. Restricted to the unit sphere $S^{n'-1}(1)=\mathbb{R}^{n'}\cap S^{n-1}(1)$, where we have $a^2+b^2+z\overline{z}=1$ and $x=y=0$,
the function $p(\cdot)$ is then given by
$$p|_{S^{n'-1}(1)}
=a^3-3ab^2-3az\overline{z}=4a^3-3a.$$
So the connected components of all $M'_t=\mathbb{R}^{n'}\cap M_t$ provide a homogeneous isoparametric foliation
on the unit sphere $(S^{n'-1}(1), g^{\mathrm{st}})$
induced by a standard block diagonal $SO(n'-1)$-action.

Denote the restriction $F'=F|_{\mathbb{R}^{n'}}$. Then $F'$  is invariant
with respect to  a standard block diagonal action of $SO(n'-1)$.
The Hessian metric $g'$ of $F'$ coincides with the restriction $g|_{\mathbb{R}^{n'}\backslash\{0\}}$.
Since $\mathbb{R}^{n'}\backslash\{0\}$ is the fixed point set of the
linear isometry $\Phi$ on $(\mathbb{R}^n\backslash\{0\},g)$, it is
totally geodesic. Because $g$ is flat on $\mathbb{R}^n\backslash\{0\}$, $g'$ is also flat on $\mathbb{R}^{n'}\backslash\{0\}$.
By Theorem \ref{cite-theorem-XM-2020}, $F'$ is Euclidean.

If we use the spherical coordinates $(r,t,\xi)\in\mathbb{R}_{>0}\times
(0,\pi)\times S^{n'-2}(1)$ on $\mathbb{R}^{n'}$, such that
$$a=r\cos t\quad \mbox{and}\quad (b,z)=r\sin t\ \xi,$$
then $F'$ has the presentation $F'=r\sqrt{2f(t)}$, where
$f(t)=c_1+c_2\cos 2t$ for some constants $c_1$ and $c_2$ with $c_1>|c_2|$. Meanwhile, $f(t)$ only depends on the values of
$$p|_{S^{n'-1}(1)}=4a^3-3a=4\cos^3 t-3\cos t=\cos 3t,$$
which has a zero derivative at $t=\tfrac{\pi}3$. So we have $\tfrac{{\rm d}}{{\rm d}}f(t)|_{t=\pi/3}=0$, i.e.,
$c_2=0$ and $f(t)$ is a positive constant function.

Since the same $f(t)$ is also used in the presentation $F=r\sqrt{2f(t)}$, we see $F$ must be Euclidean as well. The case $d=3$ is proved.
\end{proof}
\section{Hessian isometry which preserves the orientation and fixes the spherical $\xi$-coordinates}
\subsection{Notations for the spherical coordinates presentation}
\label{subsection-5-1}
Let $M_t$ be an isoparametric foliation on
$(S^{n-1}(1),g^{\mathrm{st}})$ with $d$ principal curvatures. When we mention the spherical coordinates, spherical local frame, induced Minkowski norm, etc., they are always referred to $M_t$. Let $F_1$ and $F_2$ be two induced Minkowski norms on $\mathbb{R}^n$. We denote their indicatrices and Hessian metrics as $S_{F_i}$ and $g_i$ respectively. On each $(S_{F_i},g_i)$, we have the induced isoparametric foliation $N_{i,t}=S_{F_i}\cap \mathbb{R}_{>0}M_t$.
To distinguish the two Minkowski norms, we use $\theta$ to denote the spherical $t$-coordinate for $F_2$. So we have the spherical coordinates presentations
\begin{equation}\label{0008}
F_1=r\sqrt{2f(t)}\quad\mbox{and}\quad F_2=r\sqrt{2h(\theta)}.
\end{equation}
Though (\ref{0008}) only use the values of $f(t)$ and $h(\theta)$ on $[0,\tfrac{\pi}{d}]$, as Theorem \ref{main-thm-1} indicates,
both functions can and will be extended to $D_{2d}$-invariant positive smooth functions on $\mathbb{R}/(2\mathbb{Z}\pi)$, and they satisfy the inequality in Theorem \ref{main-thm-1}.

Let $\mathbf{V}$ be any normal plane for $M_t$. We parametrize $\mathbf{V}\cap S^{n-1}(1)$ as $\gamma(t)$ with its $g^{\mathrm{st}}$-arc length parameter $t\in\mathbb{R}/(2\mathbb{Z}\pi)$ and
$\gamma(0)\in M_0$. The normal plane
$\mathbf{V}$ can be identified with $\mathbb{R}^2$ as indicated in Section \ref{subsection-2-4}, so that $v_1=\gamma(0)$ and $v_2=\gamma(\tfrac{\pi}2)$ are mapped to $e_1=(1,0)$ and $e_2=(0,1)$ respectively. The polar coordinates $(r,t)\in\mathbb{R}_{>0}\times(\mathbb{R}/(2\mathbb{Z}\pi))$ on $\mathbf{V}$ is determined by $$x=x_1v_1+x_2v_2=
r\cos t\ v_1+r\sin t\ v_2$$
for any $x\in\mathbf{V}\backslash\{0\}$. Each interval $(\tfrac{k\pi}{d},\tfrac{(k+1)\pi}{d})$ with $k\in\{0,\cdots,2d-1\}$
for the polar $t$-coordinate determines a conic open subset of $\mathbf{V}\backslash\{0\}$, which corresponds to a distinct spherical $\xi$-coordinate on $\mathbb{R}^n$.
%Then the polar coordinates $(r,t)\in\mathbb{R}_{>0}\times (\mathbb{R}/(2\mathbb{Z}\pi))$ on $\mathbb{R}^2$ with
%$x_1=r\cos t$ and $x_2=r\sin t$, and the action of the dihedral group
%$D_{2d}$ generated by left matrix multiplication by $$\left(
%                                                       \begin{array}{cc}
%                                                         1 & 0 \\
%                                                         0 & -1 \\
%                                                       \end{array}
%                                                     \right)
%\quad\mbox{and}\quad
%\left(
%         \begin{array}{cc}
%           \cos(\tfrac{2\pi}{d}) & -\sin(\tfrac{2\pi}{d})   \\
%           \sin(\tfrac{2\pi}d) &  \cos(\tfrac{2\pi}{d})\\
%         \end{array}
%       \right),
%$$
%which fix $r$ and change $t$ to $-t$ and $t+\tfrac{2\pi}{d}$ respectively, are naturally inherited by $\mathbf{V}$. Using this convention, the restrictions
The restrictions $\overline{F}_i=F_i|_{\mathbf{V}}$ have the polar
coordinates presentations
$\overline{F}_1=r\sqrt{2f(t)}$ and $\overline{F}_2=r\sqrt{2h(\theta)}$ (similarly we use $\theta$ to denote the polar $t$-coordinates for $\overline{F}_2$), where $f(t)$ and $h(\theta)$ are exactly those in (\ref{0008}) after extension.

In this section, we discuss a Hessian isometry $\Phi$ from $F_1$ to $F_2$ which {\it preserves the orientation and fixes the spherical $\xi$-coordiantes}, i.e., it satisfies the following conditions:
\begin{enumerate}
\item $\Phi$ is an orientation preserving diffeomorphism on $\mathbb{R}^n\backslash\{0\}$.
\item $\Phi$ preserves the conic open subset $C(S^{n-1}(1)\backslash M_0\cup M_{\pi/d})=\mathbb{R}^n\backslash(\mathbb{R}_{\geq0}M_0\cup
    \mathbb{R}_{\geq0}M_{{\pi}/{d}})$, and
for any $x\in C(S^{n-1}(1)\backslash M_0\cup M_{\pi/d})$,
$x$ and $\Phi(x)$ has the same $\xi$-coordinates.
\end{enumerate}

By Lemma \ref{lemma-4}, we have $\Phi(S_{F_1})=S_{F_2}$. Since the condition (2) requires that $\Phi$ fixes the spherical $\xi$-coordinates,
our previous observation indicates $\Phi$ preserves each arbitrarily chosen normal plane $\mathbf{V}$, i.e., $\overline{\Phi}=\Phi|_{\mathbf{V}}$ is a Hessian isometry from $\overline{F}_1=F_1|_{\mathbf{V}}$ to $\overline{F}_2=F_2|_{\mathbf{V}}$.
Further more, $\Phi$ fixes each point in $\mathbf{V}\cap (M_0\cup M_{\pi/2d})$ and preserves each conic open subset in $\mathbf{V}\backslash\{0\}$
with polar $t$-coordinate in $(\tfrac{k\pi}{d},\tfrac{(k+1)\pi}d)$.

More discussion for the restriction to $\mathbf{V}$ is postponed to Section \ref{subsection-5-3}. Here we only need to apply
Corollary \ref{cor-3} to notice that $\Phi$ maps each normal geodesic for the isoparametric foliation $N_{1,t}=S_{F_1}\cap \mathbb{R}_{>0}M_t$ on $(S_{F_1},g_1)$ to that for $N_{2,t}=S_{F_2}\cap\mathbb{R}_{>0} M_t$
on $(S_{F_2},g_2)$. Meanwhile, $\Phi$ preserves the foliation $\mathbb{R}_{>0}M_t$ on $\mathbb{R}^n\backslash\{0\}$.
%Further more, $\Phi$ is required to fix the spherical $\xi$-coordinates, so it preserves the normal plane $\mathbf{V}$, and the restriction $\overline{\Phi}=\Phi|_{\mathbf{V}\backslash\{0\}}$ is a Hessian isometry between $\overline{F}_i=F_i|_{\mathbf{V}}$.
%We fix a normal plane $\mathbf{V}$
%$\Phi|_{\mathbf{V}}$ is a Hessian isometry between the Hessian metrics
%$g^{F'_i}=g^{F_i}|_{\mathbf{V}\backslash\{0\}}$.
So $\Phi$ maps each $N_{1,t}=S_{F_1}\cap\mathbb{R}_{>0}M_t$ to some $N_{2,\theta(t)}=S_{F_2}\cap\mathbb{R}_{>0}M_{\theta(t)}$.
To summarize, $\Phi$ has the following spherical coordinates presentation
\begin{equation*}
(r,t,\xi)\mapsto (\tfrac{r f(t)^{1/2}}{h(\theta(t))^{1/2}},\theta(t),\xi).
\end{equation*}
Since $\Phi$ is an orientation preserving diffeomorphism by the condition (1), it can be observed immediately from the Jacobi matrix
$$\left(
    \begin{array}{ccc}
      \tfrac{f(t)^{1/2}}{h(\theta(t))^{1/2}} &
      \tfrac{h(\theta(t))-f(t)\tfrac{{\rm d}}{{\rm d}t}\theta(t)}{2f(t)(2h(\theta(t)))^{3/2}} & 0 \\
      0 & \tfrac{{\rm d}}{{\rm d}t}\theta(t) & 0 \\
      0 & 0 & \mathrm{Id} \\
    \end{array}
  \right).
$$
for the tangent map  $\Phi_*$ that $\theta(t)$ is a diffeomorphism on $(0,\tfrac{\pi}d)$ with positive derivative everywhere, and $\theta(t)=t$ for $t\in\{0,\tfrac{\pi}{d}\}$ by continuity. Indeed, $\theta(t)$ can and will be extended an orientation preserving diffeomorphism on $\mathbb{R}/(2\mathbb{Z}\pi)$ with a $D_{2d}$-equivariancy, i.e., $\theta(-t)=-\theta(t)$ and
$\theta(t+\tfrac{2\pi}{d})=\theta(t)+\tfrac{2\pi}{d}$, which will appear
in the polar coordinates presentation
$(r,t)\rightarrow (\tfrac{r f(t)^{1/2}}{h(\theta(t))^{1/2}},\theta(t))$
for $\overline{\Phi}=\Phi|_{\mathbf{V}}$. After the extension, $\theta(t)$ fixes each point in
$\tfrac{\mathbb{Z}\pi}d\subset\mathbb{R}/(2\mathbb{Z}\pi)$.

Summarizing above argument, we get the following lemma for the
spherical coordinates presentation for $\Phi$.

\begin{lemma}\label{lemma-15}
Let $F_1=r\sqrt{2f(t)}$ and $F_2=r\sqrt{2h(\theta)}$ be two Minkowski norms induced by $M_t$. Then any
Hessian isometry $\Phi$ from $F_1$ to $F_2$ which preserves the orientation and fixes the spherical $\xi$-coordinates has the spherical
coordinates presentation
\begin{equation}\label{0009}
(r,t,\xi)\mapsto (\tfrac{r f(t)^{1/2}}{h(\theta(t))^{1/2}},\theta(t),\xi),
\end{equation}
in  which $\theta(t)$ is a $D_{2d}$-equivariant orientation preserving diffeomorphism on
$\mathbb{R}/(2\mathbb{Z}\pi)$ and fixes each point in $\tfrac{\mathbb{Z}\pi}{d}$.
\end{lemma}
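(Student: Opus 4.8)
The plan is to reconstruct the presentation (\ref{0009}) one coordinate at a time, reading off the $\xi$-component, then the $t$-component, then the $r$-component, using only the homogeneity of Lemma \ref{lemma-4} together with the isoparametric geometry of Theorem \ref{main-thm-2} and Corollary \ref{cor-3}. First I would record the constraints that $\Phi$ inherits. By Lemma \ref{lemma-4}, $\Phi$ is positively $1$-homogeneous and satisfies $\Phi(S_{F_1})=S_{F_2}$. Condition (2) keeps the spherical $\xi$-coordinate fixed; since $\xi$ records a normal geodesic, hence the normal plane $\mathbf V$ through the direction of a point, this forces $\Phi$ to map each normal plane $\mathbf V$ into itself, so that $\overline\Phi=\Phi|_{\mathbf V}$ is a well-defined Hessian isometry from $\overline F_1$ to $\overline F_2$. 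Writing $\Phi$ in spherical coordinates as $(r,t,\xi)\mapsto(\rho,\tau,\xi)$ and invoking $1$-homogeneity, I obtain $\rho=r\,\rho_0(t,\xi)$ and $\tau=\tau(t,\xi)$ with both factors independent of $r$; preservation of the sectors with polar $t$-coordinate in $(\tfrac{k\pi}d,\tfrac{(k+1)\pi}d)$ together with continuity then force $\Phi$ to fix each point of $\mathbf V\cap(M_0\cup M_{\pi/d})$.

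Next I would show that $\tau$ depends only on $t$ and is a genuine reparametrization $\theta(t)$. The key claim is that $\Phi$ carries each leaf $N_{1,t}=S_{F_1}\cap\mathbb{R}_{>0}M_t$ onto a single leaf $N_{2,\theta(t)}$. To prove this I would invoke Corollary \ref{cor-3}: intersecting with $S_{F_i}$ identifies the normal planes for $M_t$ with the maximal normal geodesics for $N_{i,\cdot}$ on $(S_{F_i},g_i)$. Since $\Phi$ preserves $\mathbf V$ and maps $S_{F_1}$ to $S_{F_2}$, it sends the $N_{1,\cdot}$-normal geodesic $\mathbf V\cap S_{F_1}$ to the $N_{2,\cdot}$-normal geodesic $\mathbf V\cap S_{F_2}$; as $\mathbf V$ is arbitrary, $\Phi$ carries the whole family of normal geodesics of $N_{1,\cdot}$ onto that of $N_{2,\cdot}$. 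Being an isometry, $\Phi$ preserves orthogonality and therefore maps the orthogonal complement distribution of this geodesic family to its counterpart; since the leaves are exactly the integral hypersurfaces of that distribution (the level sets of the spherical $t$-coordinate, whose gradient is tangent to the geodesics), I conclude $\Phi(N_{1,t})=N_{2,\theta(t)}$. Because the leaf $N_{1,t}$ sweeps out all values of $\xi$, the output index cannot depend on $\xi$, so $\tau=\theta(t)$. The radial factor is then forced: the direction $(t,\xi)$ meets $S_{F_1}$ at $r=(2f(t))^{-1/2}$ and its image meets $S_{F_2}$ at $r=(2h(\theta(t)))^{-1/2}$, so $1$-homogeneity gives $\rho_0=f(t)^{1/2}/h(\theta(t))^{1/2}$, which is exactly (\ref{0009}).

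Finally I would establish the asserted properties of $\theta$. Computing the tangent map $\Phi_*$ yields the block Jacobian displayed just before the lemma statement, whose determinant has the sign of $\tfrac{\mathrm d}{\mathrm dt}\theta(t)$ (the radial factor $f^{1/2}/h^{1/2}$ being strictly positive); orientation preservation (condition (1)) thus forces $\theta'(t)>0$ on $(0,\tfrac\pi d)$, so $\theta$ is an orientation preserving diffeomorphism of the open interval, and continuity at the endpoints gives $\theta(0)=0$ and $\theta(\tfrac\pi d)=\tfrac\pi d$. To extend $\theta$ over $\mathbb{R}/(2\mathbb{Z}\pi)$ I would transport $\overline\Phi$ to $\mathbb{R}^2$ through the identification of Section \ref{subsection-2-4} and use the $D_{2d}$-invariance of $\overline F_1$ and $\overline F_2$: this makes $\overline\Phi$, and hence $\theta$, $D_{2d}$-equivariant, i.e. $\theta(-t)=-\theta(t)$ and $\theta(t+\tfrac{2\pi}d)=\theta(t)+\tfrac{2\pi}d$, so that $\theta$ fixes every point of $\tfrac{\mathbb{Z}\pi}d$. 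I expect the main obstacle to be the middle step: arguing rigorously that $\Phi$ respects the leaf structure, rather than merely carrying the isoparametric foliation $N_{1,\cdot}$ to some a priori unrelated isoparametric foliation on $(S_{F_2},g_2)$, and that the resulting index is $\xi$-independent. This is precisely where the normal-plane/normal-geodesic correspondence of Corollary \ref{cor-3}, and the care needed to see that fixing $\xi$ genuinely preserves each normal plane as a set, become indispensable.
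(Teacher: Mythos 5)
Your proposal follows essentially the same route as the paper: Lemma \ref{lemma-4} for homogeneity and $\Phi(S_{F_1})=S_{F_2}$, the observation that fixing the spherical $\xi$-coordinates forces $\Phi$ to preserve each normal plane, Corollary \ref{cor-3} to send normal geodesics of $N_{1,\cdot}$ to those of $N_{2,\cdot}$ and hence leaves to leaves, the Jacobian of $\Phi_*$ to get $\theta'(t)>0$ from orientation preservation, and the $D_{2d}$-symmetry to extend $\theta$ over $\mathbb{R}/(2\mathbb{Z}\pi)$. The only difference is that you spell out the leaf-preservation step (via the isometry carrying the orthogonal complement of the normal-geodesic distribution to its counterpart), which the paper asserts more tersely; this is a welcome clarification rather than a deviation.
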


\subsection{Description by an ODE system}
Nextly, we consider a spherical local frame
$\{\partial_r$, $\partial_t$ (or $\partial_\theta$ for $F_2$), $X_1,\cdots,X_{n-2}\}$, defined in the conic open subset of $\mathbb{R}^n\backslash\{0\}$ which
only requires the spherical $\xi$-coordinate to be contained in some
open subset in $M_{\pi/2d}$, i.e.,  its defining domain is preserved by $\Phi$. We
denote $T_1=\partial_{t}-\tfrac{r}{2f(t)}\tfrac{{\rm d}}{{\rm d}t}f(t)\partial_r$ and
$T_2=\partial_{\theta}-\tfrac{r}{2h(\theta)}\tfrac{{\rm d}}{{\rm d}\theta}h(\theta)\partial_r$ which are tangent to $S_{F_1}$ and $S_{F_2}$ respectively. Then we have
\begin{lemma}\label{lemma-6}
The tangent map
$\Phi_*$ for $\Phi$ satisfies
\begin{eqnarray}
\Phi_*(T_1)&=&\tfrac{{\rm d}}{{\rm d}t}\theta(t)\  T_2,\quad\mbox{and}\label{0010}\\
\Phi_*(X_i)&=&X_i,\quad \forall 1\leq i\leq n-2.\label{0011}
\end{eqnarray}
\end{lemma}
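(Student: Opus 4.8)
The plan is to exploit the explicit spherical coordinates presentation (\ref{0009}) of $\Phi$ furnished by Lemma \ref{lemma-15} and to read off the tangent map $\Phi_*$ directly on the basis $\{\partial_r,\partial_t,X_1,\cdots,X_{n-2}\}$. Writing the image point's radial coordinate as $\overline{r}=r f(t)^{1/2} h(\theta(t))^{-1/2}$ and its $t$-coordinate as $\theta(t)$, and recalling that $\overline{r}$ is linear in $r$ while neither $\overline{r}$ nor $\theta(t)$ depends on $\xi$, the chain rule gives at once
$$\Phi_*(\partial_r)=\tfrac{f(t)^{1/2}}{h(\theta(t))^{1/2}}\,\partial_r,\qquad \Phi_*(\partial_t)=\tfrac{\partial\overline{r}}{\partial t}\,\partial_r+\tfrac{{\rm d}}{{\rm d}t}\theta(t)\,\partial_\theta,$$
where on the right $\partial_r$ and $\partial_\theta$ denote the spherical frame vectors for $F_2$ evaluated at the image point $\Phi(p)$.

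First I would establish (\ref{0011}). The key structural fact, recorded in Section \ref{subsection-2-5}, is that each $X_i$ generates the flow $\tilde{\rho}_s(r,t,\xi)=(r,t,\rho_s(\xi))$ acting only on the $\xi$-coordinate. Since the output coordinates $\overline{r}$ and $\theta(t)$ of $\Phi$ depend only on $r$ and $t$, and $\Phi$ fixes $\xi$, we have $\Phi\circ\tilde{\rho}_s=\tilde{\rho}_s\circ\Phi$. Differentiating this identity in $s$ at $s=0$ and evaluating at a point $p$ yields $\Phi_*(X_i|_p)=X_i|_{\Phi(p)}$, which is exactly (\ref{0011}).

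Next I would verify (\ref{0010}) by a direct computation. Substituting $T_1=\partial_t-\tfrac{r}{2f(t)}\tfrac{{\rm d}}{{\rm d}t}f(t)\,\partial_r$ together with the formulas above expresses $\Phi_*(T_1)$ as a combination of $\partial_r$ and $\partial_\theta$. Its $\partial_\theta$-component is $\tfrac{{\rm d}}{{\rm d}t}\theta(t)$, matching that of $\tfrac{{\rm d}}{{\rm d}t}\theta(t)\,T_2$. For the $\partial_r$-component, the part of $\tfrac{\partial\overline{r}}{\partial t}$ carrying $\tfrac{{\rm d}}{{\rm d}t}f(t)$ cancels precisely against $-\tfrac{r}{2f(t)}\tfrac{{\rm d}}{{\rm d}t}f(t)\,\Phi_*(\partial_r)$, leaving only the term involving $\tfrac{{\rm d}}{{\rm d}\theta}h(\theta(t))$; after inserting $\overline{r}=r f(t)^{1/2}h(\theta(t))^{-1/2}$ one checks this equals the $\partial_r$-component of $\tfrac{{\rm d}}{{\rm d}t}\theta(t)\,T_2$, where $T_2=\partial_\theta-\tfrac{\overline{r}}{2h(\theta(t))}\tfrac{{\rm d}}{{\rm d}\theta}h(\theta(t))\,\partial_r$ is taken at $\Phi(p)$. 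This establishes (\ref{0010}).

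The computation is entirely mechanical; the only point demanding care is the bookkeeping of base points, that is, distinguishing the spherical frame of $F_1$ at the source $p$ from that of $F_2$ at the image $\Phi(p)$, and keeping $\overline{r}$ (rather than $r$) in the coefficient of $T_2$. The conceptual content is minimal: (\ref{0011}) is the infinitesimal form of the $\xi$-invariance, while (\ref{0010}) records that $\Phi$ carries the $S_{F_1}$-tangent field $T_1$ into the $S_{F_2}$-tangent field $T_2$ (consistently with $\Phi(S_{F_1})=S_{F_2}$ from Lemma \ref{lemma-4}), the stretching factor $\tfrac{{\rm d}}{{\rm d}t}\theta(t)$ being dictated by the reparametrization $\theta(t)$ of the normal geodesics guaranteed by Corollary \ref{cor-3}.
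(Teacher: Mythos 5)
Your proof is correct and rests on the same foundation as the paper's: both arguments read off $\Phi_*$ from the explicit spherical coordinates presentation $(r,t,\xi)\mapsto(\tfrac{rf(t)^{1/2}}{h(\theta(t))^{1/2}},\theta(t),\xi)$ of Lemma \ref{lemma-15} via the chain rule, with the base-point bookkeeping handled carefully. The only (cosmetic) difference is that for (\ref{0010}) the paper restricts to the indicatrix and compares the integral curves $t\mapsto((2f(t))^{-1/2},t,\xi)$ of $T_1$ and $\theta\mapsto((2h(\theta))^{-1/2},\theta,\xi)$ of $T_2$, whereas you carry out the equivalent Jacobian computation on the full frame and verify the cancellation of the $\tfrac{{\rm d}}{{\rm d}t}f(t)$ terms directly.
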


\begin{proof} We first prove (\ref{0010}).
By the positive 1-homogeneity (i.e., Lemma \ref{lemma-4}), we may restrict our discussion to $S_{F_1}$.
On $(S_{F_1},g_1)$, $T_1$ generates a normal geodesic $c_1(t)$ with $t\in(0,\tfrac{\pi}{d})$ for the isoparametric foliation $N_{1,t}$. Its image $\Phi(c_1(t))$ coincides with a normal geodesic $c_2(\theta)$ with $\theta\in(0,\tfrac{\pi}{d})$ for the isoparametric foliation $N_{2,\theta}$ on
$(S_{F_2},g_2)$, i.e., an integral curve of $T_2$, up to a change of parameter $\theta=\theta(t)$. So we have
$\Phi_*(T_1)=\tfrac{{\rm d}}{{\rm d}t}\theta(t)\  T_2$.

Then we prove (\ref{0011}).
Let $s\mapsto ((2f(t))^{-1/2},t,\xi(s))$ with any fixed $t\in(0,\tfrac{\pi}{d})$ be the spherical coordinates presentation for
an integral curve of $X_i$ on $S_{F_1}$, then its $\Phi$-image has the spherical coordinates presentation $s\mapsto ((2h(\theta(t)))^{-1/2},\theta(t),\xi(s))$, which is still
an integral curve of $X_i$. So we have $\Phi_*(X_i)=X_i$, $\forall 1\leq i\leq n-2$.
\end{proof}

Since $\Phi$ is a Hessian isometry, (\ref{0010}) implies
\begin{equation}\label{0012}
g_1(T_1,T_1)=g_2(\Phi_*(T_1),\Phi_*(T_1))=\left(\tfrac{{\rm d}}{{\rm d}t}\theta(t)\right)^2\  g_2(T_2,T_2),
\end{equation}
where the left side is evaluated at $x=((2f(t))^{-1/2},t,\xi)\in S_{F_1}\backslash(N_{1,0}\cup N_{1,\pi/d})$
and the right side is evaluated at $\Phi(x)=((2h(\theta(t)))^{-1/2},\theta(t),\xi)\in S_{F_2}\backslash(N_{2,0}\cup N_{2,\pi/d})$.
Using (\ref{0007}) in Lemma \ref{lemma-7}, we get the following
ODE,
\begin{eqnarray}
& &\tfrac{1}{2f(t)}\tfrac{{\rm d}^2}{{\rm d}t^2}f(t)-\tfrac{1}{4f(t)^2}\left(\tfrac{{\rm d}}{{\rm d}t}f(t)\right)^2+1\nonumber\\
\label{ODE-1}&=&\left(\tfrac{{\rm d}}{{\rm d}t}\theta(t)\right)^2\
\left(\tfrac{1}{2h(\theta(t))}\tfrac{{\rm d}^2}{{\rm d}\theta^2}h(\theta(t))-\tfrac{1}{4h(\theta(t))^2}\left(\tfrac{{\rm d}}{{\rm d}\theta}h(\theta(t))\right)^2+1\right),
\end{eqnarray}
for $t\in(0,\tfrac{\pi}{d})$. By continuity and $D_{2d}$-symmetry, (\ref{ODE-1}) for $t\in(0,\tfrac{\pi}d)$ is equivalent to that for
all $t\in\mathbb{R}/(2\mathbb{Z}\pi)$.

Similarly, (\ref{0011}) implies $g_1(X_i,X_i)=g_2(X_i,X_i)$ for each $i$
%, where the left inside is evaluated at $x=((2f(t))^{-1/2},t,\xi)\in S_{F_1}\backslash(N_{1,0}\cup N_{1,\pi/d})$ and the right
%side is evaluated at $\Phi(x)=((2h(\theta(t)))^{-1/2},\theta(t),\xi)\in S_{F_2}\backslash(N_{2,0}\cup N_{2,\pi/d})$
.
Using Lemma \ref{lemma-3}, we get the following ODEs,
\begin{equation}
\label{0014}
f_i(t)+\tfrac{1}{4f(t)}\tfrac{{\rm d}}{{\rm d}t}f_i(t)\tfrac{{\rm d}}{{\rm d}t}f(t)=f_i(\theta(t))+
\tfrac{1}{4h(\theta(t))}\tfrac{{\rm d}}{{\rm d}\theta}f_i(\theta(t))\tfrac{{\rm d}}{{\rm d}\theta}h(\theta(t)),
\end{equation}
for any $1\leq i\leq n-2$ and any $t\in(0,\tfrac{\pi}{d})$. Here
$f_i(t)=a_i\sin^2(t+\tfrac{k_i\pi}{d})$ with constants $a_i>0$ and $k_i\in\{0,\cdots,d-1\}$. Because $\{k_1,\cdots,k_{n-2}\}=\{0,\cdots,d-1\}$, we can reorganize
(\ref{0014}) as
\begin{eqnarray}
& &\sin^2 (t+\tfrac{k\pi}{d})+\tfrac{\cos(t+\tfrac{k\pi}{d})
\sin(t+\tfrac{k\pi}{d})}{2f(t)}\tfrac{{\rm d}}{{\rm d}t}f(t)\nonumber\\
&=&\sin^2(\theta(t)+\tfrac{k\pi}{d})+\tfrac{\cos(\theta(t)+\tfrac{k\pi}{d})
\sin(\theta(t)+\tfrac{k\pi}{d})}{2h(\theta(t))}
\tfrac{{\rm d}}{{\rm d}\theta}h(\theta(t)),\label{0013}
\end{eqnarray}
for every $t\in(0,\tfrac{\pi}{d})$ and every $k\in\{0,\cdots,d-1\}$.

Obviously, the ODEs in (\ref{0013}) are satisfied with all $k\in\mathbb{Z}$. Further more, they are satisfied for all $t\in\mathbb{R}/(2\mathbb{Z}\pi)$ as well.
To prove this claim, we first observe that by continuity, (\ref{0013}) for each $k$ is valid at $t=0$ and $t=\tfrac{\pi}{d}$.
Using the properties $f(t)=f(-t)$,
$h(\theta)=h(-\theta)$ and $\theta(-t)=-\theta(t)$, (\ref{0013}) with $k=k'$ for $t\in[-\tfrac{\pi}{d},0]$ can be deduced from (\ref{0013})
with $k= d-k'$   for $t\in[0,\tfrac{\pi}d]$. Then using the symmetry with respect to  $\mathbb{Z}_{g}\subset D_g$, i.e., the properties $f(t)=f(t+\tfrac{2\pi}{d})$, $h(\theta)=h(\theta+\tfrac{2\pi}{d})$
and $\theta(t+\tfrac{2\pi}{d})=\theta(t)+\tfrac{2\pi}{d}$,
(\ref{0013}) with $k=k'$ for   $t\in[\tfrac{(2k''-1)\pi}{d},\tfrac{(2k''+1)\pi}d]$ can be deduced from (\ref{0013}) with $k= k'+2k''$   for $t\in[-\tfrac{\pi}d,\tfrac{\pi}d]$.

Above argument tells us how to determine the triple $(f(t),h(\theta),\theta(t))$ from $(F_1,F_2,\Phi)$, and more
importantly, list the properties the triple $(f(t),h(\theta),\theta(t))$
must satisfy. Then we observe how to use the data $(f(t),h(\theta),\theta(t))$ with those properties to construct
the wanted $(F_1,F_2,\Phi)$.

When the positive smooth $D_{2d}$-invariant functions $f(t)$ and
$h(\theta)$ satisfy the inequality in Theorem \ref{main-thm-1}, we can use them construct the induced Minkowski norms $F_1=r\sqrt{2f(t)}$ and $F_2=r\sqrt{2h(\theta)}$.

When the $D_{2d}$-equivariant orientation preserving
diffeomorphism $\theta(t)$ on $\mathbb{R}/(2\mathbb{Z}\pi)$
fixes each point in $\tfrac{\mathbb{Z}\pi}d$, we can use it to construct
a  diffeomorphism $\Phi$ on $C(S^{n-1}(1)\backslash(M_0\cup M_{\pi/d}))$ with the spherical coordinates presentation
(\ref{0009}), i.e., $(r,t,\xi)\mapsto (\tfrac{rf(t)^{1/2}}{h(\theta(t))^{1/2}},\theta(t),\xi)$. Then obviously
$\Phi$ preserves the orientation and fixes the spherical $\xi$-coordinates.
To see it can be extended to a Hessian isometry, we only need to consider its restriction between the two indicatrices.

Let $\{\partial_r$, $\partial_t$ (or $\partial_\theta$ for $F_2$), $X_1,\cdots,X_{n-2}\}$ be any spherical local frame, and denote
$T_1=\partial_t-\tfrac{r}{2f(t)}\tfrac{{\rm d}}{{\rm d}t}f(t)\partial_r$ and
$T_2=\partial_\theta-\tfrac{r}{2h(\theta)}\tfrac{{\rm d}}{{\rm d}\theta}h(\theta)\partial_r$. Similarly, from (\ref{0009}), we see that $\Phi$ preserves the defining domain for this spherical local frame,  $\Phi_*(T_1)=\tfrac{{\rm d}}{{\rm d}t}\theta(t)\  T_2$, and $\Phi_*(X_i)=X_i$, $\forall 1\leq i\leq n-2$, as in Lemma \ref{lemma-6}.
Then
(\ref{ODE-1}) and (\ref{0013}) for all $k\in\{0,\cdots,d-1\}$
implies $$g_1(T_1,T_1)=g_2(\Phi_*(T_1),\Phi_*(T_1)),\quad \mbox{and}\quad
g_1(X_i,X_i)=g_2(\Phi_*(X_i),\Phi_*(X_i)),\ \forall i.$$
Since $\{T_i,X_1,\cdots,X_{n-2}\}$ is $g_i$-orthogonal, we see
$\Phi$ is an isometry between $(S_{F_i}\backslash(N_{i,0}\cup N_{i, {\pi}/d})$, $g_i)$.

Then we prove that $\Phi$ can be continuously glued with some scalar multiplications from $N_{1,t}$ to $N_{2,t}$ for $t=0,\tfrac{\pi}d$ respectively. Equivalently, we show the following map, $$\Phi_1(x)=\tfrac{\Phi(x)}{|\Phi(x)|}, \ \forall x\in S^{n-1}(1)\backslash (M_0\cup M_{\pi/d}),\quad \Phi_1(x)=x,\ \forall x\in M_0\cup M_{\pi/d},$$
is continuous on $S^{n-1}(1)$.
Using the exponential maps for the normal bundle of the two focal submanifolds $M_0$ and $M_{\pi/d}$ in $(S^{n-1}(1),g^{\mathrm{st}})$, we see $S^{n-1}(1)$ has a topological basis consisting of the following open subsets:
\begin{enumerate}
\item Open subsets $U$ of $S^{n-1}(1)\backslash
(M_0\cup M_{\pi/d})$;
\item $U_{U',c}$ parametrized by the
open subset $U'\subset M_0$
and the real number $c\in(0,\tfrac{\pi}{d})$, which contains
 every point $x\in S^{n-1}(1)$ which satisfies $\mathrm{dist}_{S^{n-1}(1)}(x,M_{0})<c$ and is contained in some normal geodesic for $M_t$ in $(S^{n-1}(1),g^{\mathrm{st}})$ passing $U'$.
\item $U_{U'',c}$ parametrized by the
open subset $U''\subset M_{\pi/d}$
and the real number $c\in(0,\tfrac{\pi}{d})$, which contains
 every point $x\in S^{n-1}(1)$ which satisfies $\mathrm{dist}_{S^{n-1}(1)}(x,M_{\pi/d})<c$ and is contained in some normal geodesic for $M_t$ in $(S^{n-1}(1),g^{\mathrm{st}})$ passing $U''$.
\end{enumerate}

Restricted to $ S^{n-1}(1)\backslash(M_0\cup M_{\pi/d}) $, $\Phi_1$
is a diffeomorphism with the spherical coordinates presentation
$(1,t,\xi)\mapsto (1,\theta(t),\xi)$. So for any open subset $U\in S^{n-1}(1)\backslash(M_0\cup M_{\pi/d})$, $\Phi_1^{-1}(U)$ is still
an open subset in $S^{n-1}(1)\backslash(M_0\cup M_{\pi/d})$. By the spherical coordinates presentation for $\Phi_1$, we see
$\Phi_1^{-1}({U}_{c,U'})={U}_{\theta^{-1}(c),U'}$
and $\Phi_1^{-1}({U}_{c,U''})=
{U}_{c_1,U''}$ with $c_1=\tfrac{\pi}d-\theta^{-1}(\tfrac{\pi}d-c)$, for any $c\in(0,\tfrac{\pi}{d})$, any open subset $U'\subset M_0$ and any
open subset $U''\subset M_{\pi/d}$. Here $t$ and $\theta(t)$ are viewed as numbers in $(0,\tfrac{\pi}d)$.
So $\Phi_1$ is continuous on $S^{n-1}(1)$. Meanwhile, we see $\Phi$ can be continuously extended to a homomorphism from $S_{F_1}$ to $S_{F_2}$.

Because $\Phi$ is isometric when restricted to
$(S_{F_i}\backslash(N_{i,0}\cup N_{i,\pi/d}),g_i)$, it is still an isometry after the extension. Finally, by Lemma \ref{lemma-4}, we can use the positive 1-homogeneity to further extend $\Phi$ to a Hessian isometry between $F_1$ and $F_2$.

Above discussion tells us the triple $(F_1,F_2,\Phi)$ can be constructed from the triple $(f(t),h(\theta)$, $\theta(t))$. So we have
the one-to-one correspondence in the following theorem.

\begin{theorem}\label{main-thm-4}
Let $M_t$ be an isoparametric foliation on $(S^{n-1}(1),g^{\mathrm{st}})$ with $d$ principal curvatures.
Then the spherical coordinates presentations, $F_1=r\sqrt{2f(t)}$, $F_2=r\sqrt{2h(\theta)}$ and $\Phi:(r,t,\xi)\mapsto(\tfrac{rf(t)^{1/2}}{h(\theta(t))^{1/2}},\theta(t),\xi)$,
provide the one-to-one correspondence between the set of all triples $(F_1,F_2,\Phi)$ satisfying the following:
\begin{enumerate}
\item $F_1$ and $F_2$ are
Minkowski norms on $\mathbb{R}^n$ induced by $M_t$;
\item $\Phi$ is a Hessian isometry from $F_1$ to $F_2$ which
preserves the orientation and fixes the spherical $\xi$-coordinates.
\end{enumerate}
and the set of all triples $(f(t),h(\theta),\theta(t))$ satisfying the following:
\begin{enumerate}
\item $f(t)$ and $h(\theta)$ are $D_{2d}$-invariant positive smooth functions on $\mathbb{R}/(2\mathbb{Z}\pi)$ satisfying the inequalities
    \begin{eqnarray}
    & &2f(t)\tfrac{{\rm d}^2}{{\rm d}t^2}f(t)-\left(\tfrac{{\rm d}}{{\rm d}t}f(t)\right)^2+4f(t)^2>0,\quad\mbox{and}
    \label{ODIE-1-main-thm-4}\\
    & &2h(\theta)\tfrac{{\rm d}^2}{{\rm d}\theta^2}h(\theta)-\left(\tfrac{{\rm d}}{{\rm d}\theta}h(\theta)\right)^2+4h(\theta)^2>0;
    \label{ODIE-2-main-thm-4}
    \end{eqnarray}
\item $\theta(t)$ is a $D_{2d}$-equivariant orientation preserving
diffeomorphism on $\mathbb{R}/(2\mathbb{Z}\pi)$ which fixes
each point in $\tfrac{\mathbb{Z}\pi}d$;
\item The triple $(f(t),h(\theta),\theta(t))$ is a solution of the following ODE system for all $t\in\mathbb{R}/(2\mathbb{Z}\pi)$,
\begin{eqnarray}
& &\tfrac{1}{2f(t)}\tfrac{{\rm d}^2}{{\rm d}t^2}f(t)-\tfrac{1}{4f(t)^2}\left(\tfrac{{\rm d}}{{\rm d}t}f(t)\right)^2+1\nonumber\\
&=&\left(\tfrac{{\rm d}}{{\rm d}t}\theta(t)\right)^2\
\left(\tfrac{1}{2h(\theta(t))}\tfrac{{\rm d}^2}{{\rm d}\theta^2}h(\theta(t))-\tfrac{1}{4h(\theta(t))^2}\left(\tfrac{{\rm d}}{{\rm d}\theta}h(\theta(t))\right)^2+1\right),\quad\mbox{and}
\label{ODE-1-main-thm-4}\\
   & &\sin^2 (t+\tfrac{k\pi}{d})+\tfrac{\cos(t+\tfrac{k\pi}{d})
\sin(t+\tfrac{k\pi}{d})}{2f(t)}\tfrac{{\rm d}}{{\rm d}t}f(t)\nonumber\\
&=&\sin^2(\theta(t)+\tfrac{k\pi}{d})+
\tfrac{\cos(\theta(t)+\tfrac{k\pi}{d})
\sin(\theta(t)+\tfrac{k\pi}{d})}{2h(\theta(t))}
\tfrac{{\rm d}}{{\rm d}\theta}h(\theta(t))
\label{ODE-2-main-thm-4}
\end{eqnarray}
for each $k\in\{0,\cdots,d-1\}$.
\end{enumerate}
\end{theorem}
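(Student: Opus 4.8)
The plan is to prove the asserted bijection by exhibiting the two maps between the sets explicitly---via the spherical and polar coordinate presentations---and checking that they are mutually inverse. Since both directions are governed by the same formulas $F_1=r\sqrt{2f(t)}$, $F_2=r\sqrt{2h(\theta)}$ and $\Phi:(r,t,\xi)\mapsto(\tfrac{rf(t)^{1/2}}{h(\theta(t))^{1/2}},\theta(t),\xi)$, once I verify that each formula carries a member of one set to a member of the other, the inverse relationship is automatic. The work therefore splits into a forward direction (reading off $(f,h,\theta)$ from $(F_1,F_2,\Phi)$ and listing its properties) and a backward direction (rebuilding $(F_1,F_2,\Phi)$ from an admissible $(f,h,\theta)$).

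For the forward direction I would first invoke Theorem \ref{main-thm-1} to conclude that the profile functions $f$ and $h$ attached to $F_1$ and $F_2$ extend to $D_{2d}$-invariant positive smooth functions on $\mathbb{R}/(2\mathbb{Z}\pi)$ satisfying the convexity inequalities (\ref{ODIE-1-main-thm-4}) and (\ref{ODIE-2-main-thm-4}), giving condition (1) on the right. Condition (2) is precisely Lemma \ref{lemma-15}: any $\Phi$ of the stated type has the displayed spherical presentation with $\theta(t)$ a $D_{2d}$-equivariant orientation-preserving diffeomorphism fixing $\tfrac{\mathbb{Z}\pi}{d}$. For condition (3) I would use Lemma \ref{lemma-6}, which identifies $\Phi_*(T_1)=\tfrac{\mathrm{d}}{\mathrm{d}t}\theta(t)\,T_2$ and $\Phi_*(X_i)=X_i$; since $\{T_i,X_1,\dots,X_{n-2}\}$ is $g_i$-orthogonal, the isometry condition forces $g_1(T_1,T_1)=(\theta')^2 g_2(T_2,T_2)$ and $g_1(X_i,X_i)=g_2(X_i,X_i)$. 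Feeding in the formulas of Lemma \ref{lemma-7} and Lemma \ref{lemma-3} yields (\ref{ODE-1-main-thm-4}) and the equations (\ref{0014}), which reorganize to (\ref{ODE-2-main-thm-4}) for each $k\in\{0,\dots,d-1\}$ because $\{k_1,\dots,k_{n-2}\}=\{0,\dots,d-1\}$; the $D_{2d}$-symmetry then propagates the ODEs from $(0,\tfrac{\pi}{d})$ to all of $\mathbb{R}/(2\mathbb{Z}\pi)$.

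For the backward direction, starting from an admissible triple $(f,h,\theta)$ I would build $F_1,F_2$ as genuine Minkowski norms via Theorem \ref{main-thm-1} and define $\Phi$ by the explicit formula, which is manifestly an orientation-preserving diffeomorphism of the conic set $C(S^{n-1}(1)\backslash(M_0\cup M_{\pi/d}))$ fixing the $\xi$-coordinates. Running Lemma \ref{lemma-6} in reverse shows $\Phi_*(T_1)=\theta'(t) T_2$ and $\Phi_*(X_i)=X_i$; the two ODEs then return the norm-equalities on the $g_i$-orthogonal frame, so $\Phi$ restricts to an isometry from $(S_{F_1}\backslash(N_{1,0}\cup N_{1,\pi/d}),g_1)$ to $(S_{F_2}\backslash(N_{2,0}\cup N_{2,\pi/d}),g_2)$, and Lemma \ref{lemma-4} extends it by positive $1$-homogeneity over the whole conic set.

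I expect the main obstacle to be the extension of $\Phi$ across the two focal loci $N_{i,0}$ and $N_{i,\pi/d}$, where the spherical frame degenerates. The clean way is to analyze the normalized map $\Phi_1(x)=\Phi(x)/|\Phi(x)|$ on $S^{n-1}(1)$, declaring $\Phi_1=\mathrm{id}$ on $M_0\cup M_{\pi/d}$. Using the normal exponential maps of the focal submanifolds one produces a topological basis of $S^{n-1}(1)$ adapted to the foliation, and the hypothesis that $\theta$ fixes $\tfrac{\mathbb{Z}\pi}{d}$ is exactly what makes the preimages of the basic neighborhoods open---so $\Phi_1$, hence $\Phi$, extends continuously to a homeomorphism $S_{F_1}\to S_{F_2}$. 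A continuous map that is an isometry on a dense open subset is distance-preserving on the compact indicatrix, and a distance-preserving bijection of Riemannian manifolds is a smooth isometry; thus the extended $\Phi$ is a genuine Hessian isometry. This gluing and continuity step---rather than the derivation of the ODEs---is the delicate part of the argument, and once it is in place the two constructions are visibly mutually inverse, completing the three-way correspondence.
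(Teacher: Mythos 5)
Your proposal is correct and follows essentially the same route as the paper: the forward direction via Theorem \ref{main-thm-1}, Lemma \ref{lemma-15}, Lemma \ref{lemma-6} and the frame computations of Lemmas \ref{lemma-3} and \ref{lemma-7}, and the backward direction via the explicit construction on the regular conic set followed by the continuity/gluing argument for $\Phi_1(x)=\Phi(x)/|\Phi(x)|$ using the adapted topological basis near the focal submanifolds. Your closing appeal to the Myers--Steenrod principle (a distance-preserving bijection is a smooth isometry) just makes explicit a step the paper states more tersely.
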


\begin{remark} The way we put Theorem \ref{main-thm-4} is explicit and convenient. However, it contains some iterance. For example, when $d\in\{1,2,3\}$, we only need to keep the ODE with $k=0$ for (\ref{ODE-2-main-thm-4}). When $d=2$, the two ODEs
in (\ref{ODE-2-main-thm-4}) are equivalent, because their sum is
$1=1$. When $d=3$, we can use (\ref{ODE-2-main-thm-4}) with $k=0$ and the $\mathbb{Z}_{d}$-symmetry
in $D_{2d}$ to deduce the other ODEs in (\ref{ODIE-2-main-thm-4}). Similarly, when $d=4$ and $6$, we only need the two ODEs with $k=0,1$ for (\ref{ODE-2-main-thm-4}).
\end{remark}
\label{subsection-5-2}
\subsection{Geometric description by (d)-property}
\label{subsection-5-3}
In this subsection, we study the correspondence between the triple
$(F_1,F_2,\Phi)$ in Theorem \ref{main-thm-4} and its restriction
to a normal plane $\mathbf{V}$ for $M_t$. Recall that
$\overline{F}_1=F_1|_{\mathbf{V}}$ and
$\overline{F}_2=F_2|_{\mathbf{V}}$ are two $D_{2d}$-invariant
Minkowski norms on $\mathbf{V}$ with polar coordinates presentations $\overline{F}_1=r\sqrt{2f(t)}$ and $\overline{F}_2=r
\sqrt{2h(\theta)}$. In Section \ref{subsection-5-2}, we have seen the restriction $\overline{\Phi}=
\Phi|_{\mathbf{V}}$ is a Hessian isometry from $\overline{F}_1$
and $\overline{F}_2$ with the polar coordinates presentation
$(r,t)\rightarrow(\tfrac{rf(t)^{1/2}}{h(\theta(t))^{1/2}},\theta(t))$.
Here $(f(t),h(\theta),\theta(t))$ is just the triple in Theorem \ref{main-thm-4} corresponding to $(F_1,F_2,\Phi)$.
The diffeomorphism $\theta(t)$ on $\mathbb{R}/(2\mathbb{Z}\pi)$ is $D_{2d}$-equivariant and preserves the orientation, so $\overline{\Phi}$ is $D_{2d}$-equivariant and preserves the orientation as well. Further more,
since $\theta(t)$ fixes each point in $\tfrac{\mathbb{Z}\pi}d$, $\overline{\Phi}$ preserves each
ray spanned by the points in $\mathbf{V}\cap(M_0\cup M_{\pi/d})$.

The restriction to $\mathbf{V}$ provides the correspondence from the triple $(F_1,F_2,\Phi)$ to the triple $(\overline{F}_1,\overline{F}_2,\overline{\Phi})$, by which we can explain (\ref{ODIE-1-main-thm-4})-(\ref{ODE-2-main-thm-4}) in Theorem \ref{main-thm-4}.

Theorem \ref{main-thm-1} indicates the two inequalities (\ref{ODIE-1-main-thm-4}) and (\ref{ODIE-2-main-thm-4})  just tell us $\overline{F}_1$ and $\overline{F}_2$ are Minkowski norms.

To explain (\ref{ODE-1-main-thm-4}), we
denote $\overline{T}_1=\partial_t-\tfrac{r}{2f(t)}\tfrac{{\rm d}}{{\rm d}t}f(t)\partial_r$ and $\overline{T}_2=\partial_\theta-\tfrac{r}{2h(\theta)}
\tfrac{{\rm d}}{{\rm d}\theta}h(\theta)\partial_r$ the tangent vector
fields  on $\mathbf{V}\backslash\{0\}$  which generate
$S_{\overline{F}_1}$ and $S_{\overline{F}_2}$ respectively. Here $\partial_r$ and $\partial_t$ (or $\partial_\theta$ for $\overline{F}_2$) correspond to the polar $r$- and $t$-coordinates on
$\mathbf{V}$. By Lemma \ref{lemma-6} and the $D_{2d}$-symmetry,
we get $\overline{\Phi}_*(\overline{T}_1)=
\tfrac{{\rm d}}{{\rm d}t}\theta(t)\
\overline{T}_2$. Then by (\ref{0007}) in Lemma \ref{lemma-7}, the ODE (\ref{ODE-1-main-thm-4}) just tells us that
$g^{\overline{F}_1}_x(\overline{T}_1,\overline{T}_1)
=g^{\overline{F}_2}_{\overline{\Phi}(x)}
(\overline{\Phi}_*(\overline{T}_1),\overline{\Phi}_*(\overline{T}_2))$, i.e.,
$\overline{\Phi}$ is
a Hessian isometry between $\overline{F}_i$.

%Let $\{\partial_r,\partial_t$ (or $\partial_\theta$ for $F_2$), $X_1,\cdots,X_{n-2}\}$ be a spherical local frame which defining domain has a nonempty intersection with $\mathbf{V}$. Then the
%canonical smooth extension of $\partial_t$ or $\partial_\theta$ on $\mathbf{V}\backslash\{0\}$ corresponds to the polar $t$-coordinate.
%Similarly, $T_1=\partial_t-\tfrac{r}{2f(t)}\tfrac{{\rm d}}{{\rm d}t}f(t)\partial_r$ and $T_2=\partial_\theta-\tfrac{r}{2h(\theta)}
%\tfrac{{\rm d}}{{\rm d}\theta}h(\theta)\partial_r$ generate the indicatrices of $\overline{F}_1$ and $\overline{F}_2$ respectively.
%Restricted to $\mathbf{V}\backslash\{0\}$, we still have
%$\overline{\Phi}_*(T_1)=\tfrac{{\rm d}}{{\rm d}}\theta(t)\ T_2$, so
%the ODE (\ref{ODE-1-main-thm-4}) just claims $\overline{\Phi}$
%is an isometry between $\overline{F}_i$.

To explain (\ref{ODE-2-main-thm-4}), we recall that the orthonormal coordinates $(x_1,x_2)$ and polar coordinates $(r,t)$ (or $(r,\theta)$ where $\overline{F}_2$ is concerned) of
$x\in\mathbf{V}\backslash\{0\}$ are related by
$$x=x_1v_1+x_2v_2=r\cos t\ v_1+r\sin t\ v_2,$$
in which $v_1\in \mathbf{V}\cap M_0$ and $v_2$ provide an orthonormal basis on $\mathbf{V}$.

Denote $\overline{E}_1=\tfrac{1}2 \overline{F}_1^2=r^2 f(t)$,
then we have
\begin{eqnarray*}
\tfrac{\partial}{\partial x_1}\overline{E}_1=\sin t\tfrac{\partial}{\partial r}\overline{E}_1
+\tfrac1r\cos t\tfrac{\partial}{\partial t}\overline{E}_1=
 2r\sin t f(t)+r\cos t\tfrac{{\rm d}}{{\rm d}t}f(t).
\end{eqnarray*}
So at $x=x_1v_1+x_2v_2\in S_{\overline{F}_1}$, where $2r^2 f(t)=1$ and $x_2=r\sin t$,
\begin{eqnarray*}
x_2\tfrac{\partial}{\partial x_2}\overline{E}_1=
r\sin t\left(2r\sin t\ f(t)+r\cos t\tfrac{{\rm d}}{{\rm d}t}f(t)\right)
=\sin^2 t+\tfrac{\cos t\sin t}{2f(t)}\tfrac{{\rm d}}{{\rm d}t}f(t),
\end{eqnarray*}
which coincides with the left side of (\ref{ODE-2-main-thm-4}).
The right side of (\ref{ODE-2-main-thm-4}) can be expressed similarly. So (\ref{ODE-2-main-thm-4}) tells us, with respect to  the orthogonal decomposition
$$\mathbf{V}=\mathbf{V}'+\mathbf{V}''
=\mathbb{R}v_1+\mathbb{R}v_2 \mbox{ (i.e., we have } \mathbf{V}'=\mathbb{R}v_1\mbox{ and }\mathbf{V}''=\mathbb{R}v_2\mbox{)},$$
for any $x=x'+x''\in S_{F_1}$ and
$\overline{\Phi}(x)=\overline{x}=
\overline{x}'+\overline{x}''\in S_{F_2}$, the following equality is satisfied,
$$g^{\overline{F}_1}_{x}(x'',x)=
g^{\overline{F}_2}_{\overline{x}}(\overline{x}'',\overline{x}).$$

More generally, we define this property as following.
\begin{definition}\label{defining-(d)-property}
A Hessian isometry $\Phi$ between two Minkowski norms $F_1$ and $F_2$ on $\mathbb{R}^n$ with $n\geq2$ is said to satisfy the (d)-property
 with respect to  the orthogonal decomposition
$\mathbb{R}^n=\mathbf{V}'+\mathbf{V}''$, if for any nonzero $x=x'+x''$ and
$ {\Phi}(x)=\overline{x}=\overline{x}'+\overline{x}'' $, with $x',\overline{x}'\in \mathbf{V}'$ and $x'',\overline{x}''\in \mathbf{V}''$, we always have
$g^{ {F}_1}_{x}(x'',x)=
g^{ {F}_2}_{\overline{x}}(\overline{x}'',\overline{x})$ (or equivalently,
$g^{ {F}_1}_{x}(x',x)=F_1(x)^2-
g^{ {F}_1}_{x}(x'',x)=F_2(\overline{x})^2-
g^{ {F}_2}_{\overline{x}}(\overline{x}'',\overline{x})
=g^{ {F}_2}_{\overline{x}}(\overline{x}',\overline{x})$).
\end{definition}
%$x$ and $\overline{x}$ respectively. Then (\ref{ODE-2-in-thm}) can be
%interpreted as the following equation,
%\begin{equation}\label{0016}
%\langle x'',x\rangle_x^{F'_1}=\langle \overline{x}'',\overline{x}\rangle_{\overline{x}}^{F'_2}.
%\end{equation}

So the ODE (\ref{ODE-1-main-thm-4}) with $k=0$ can be interpreted as the (d)-property of $\overline{\Phi}$
for $\mathbf{V}=\mathbf{V}'+\mathbf{V}''
=\mathbb{R}v_1+\mathbb{R}v_2$.
By a similar argument, (\ref{ODE-2-main-thm-4}) with $0<k\leq d-1$
can be interpreted as the (d)-property of $\overline{\Phi}$
for the decomposition
$$\mathbf{V}=\mathbf{V}'+\mathbf{V}''=
\mathbb{R}(\cos(-\tfrac{k\pi}{d}) v_1+\sin(-\tfrac{k\pi}{d})v_2)
+\mathbb{R}(\cos(\tfrac{\pi}{2}-\tfrac{k\pi}{d}) v_1+
\sin(\tfrac{\pi}{2}-\tfrac{k\pi}{d}) v_2).$$

To summarize, we see that $(F_1,F_2,\Phi)$ in Theorem \ref{main-thm-4} determines
$(\overline{F}_1,\overline{F}_2,\overline{\Phi})$ which satisfies
$D_{2d}$-symmetry and  (d)-properties. Conversely, from any
$(\overline{F}_1,\overline{F}_2,\overline{\Phi})$ with $D_{2d}$-symmetry and  (d)-properties, we can retrieve
the triple $(f(t),h(\theta),\theta(t))$ in Theorem \ref{main-thm-4}
and then use it to target $(F_1,F_2,\Phi)$.

Finally, we can transport  $(\overline{F}_1,\overline{F}_2,\overline{\Phi})$ to $\mathbb{R}^2$,
using the identification in Section \ref{subsection-2-4}, which identifies $v_1$ and $v_2$ to
$e_1=(1,0)$ and $e=(0,1)$ in $\mathbb{R}^2$ respectively. By the $D_{2d}$-symmetry, the triple after translation is irrelevant to the choice of the identification between $\mathbf{V}$ and $\mathbb{R}^2$. It does
not depend on the choice of $\mathbf{V}$ either.
So we have the one-to-one correspondence in the following theorem.

\begin{theorem}\label{main-thm-5}
Let $M_t$ be an isoparametric foliation on $(S^{n-1}(1),g^{\mathrm{st}})$ with $d$ principal curvatures.
Then the restriction to a normal plane $\mathbf{V}$ for $M_t$, and the identification between $\mathbf{V}$ and $\mathbb{R}^2$ provide a one-to-one correspondence between the set of all triples
$(F_1,F_2,\Phi)$ satisfying the following:
\begin{enumerate}
\item $F_1$ and $F_2$ are
Minkowski norms on $\mathbb{R}^n$ induced by $M_t$;
\item $\Phi$ is a Hessian isometry from $F_1$ to $F_2$ which
preserves the orientation and fixes the spherical $\xi$-coordinates.
\end{enumerate}
and the set of all triples $(\overline{F}_1,\overline{F}_2,\overline{\Phi})$
satisfying the following:
\begin{enumerate}
\item $\overline{F}_1$ and $\overline{F}_2$ are two $D_{2d}$-invariant Minkowski norms on $\mathbb{R}^2$;
\item $\overline{\Phi}$ is a $D_{2d}$-equivariant orientation preserving Hessian isometry from $\overline{F}_1$ to
    $\overline{F}_2$, which preserves each ray spanned by $(\cos\tfrac{k\pi}{d},\sin\tfrac{k\pi}{d})$ for $k\in\{0,\cdots,2d-1\}$.
\item For each $k\in\{0,\cdots,d-1\}$, with respect to  the decomposition
$$\mathbb{R}^2=\mathbf{V}'+\mathbf{V}''=
\mathbb{R}(\cos(-\tfrac{k\pi}d),\sin(-\tfrac{k\pi}d))
+\mathbb{R}(\cos(\tfrac{\pi}2-\tfrac{k\pi}d,\sin(\tfrac{\pi}2-
\tfrac{k\pi}{d}))),$$
$\overline{\Phi}$ satisfies the (d)-property, i.e., for any nonzero $x=x'+x''$ and
$\overline{\Phi}(x)=\overline{x}=\overline{x}'+\overline{x}''$, with $x',\overline{x}'\in \mathbf{V}'$ and $x'',\overline{x}''\in \mathbf{V}''$, we always have
$g^{\overline{F}_1}_{x}(x'',x)=
g^{\overline{F}_2}_{\overline{x}}(\overline{x}'',\overline{x})$.
\end{enumerate}
\end{theorem}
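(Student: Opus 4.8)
The plan is to factor the asserted correspondence through the set $\mathcal{T}$ of triples $(f(t),h(\theta),\theta(t))$ appearing in Theorem \ref{main-thm-4}. Write $\mathcal{A}$ for the set of triples $(F_1,F_2,\Phi)$ described in (1)--(2) and $\mathcal{B}$ for the set of triples $(\overline{F}_1,\overline{F}_2,\overline{\Phi})$ described in (1)--(3). Theorem \ref{main-thm-4} already furnishes a bijection $\mathcal{A}\leftrightarrow\mathcal{T}$, so it suffices to produce a bijection $\mathcal{B}\leftrightarrow\mathcal{T}$ and to check that the composite $\mathcal{A}\to\mathcal{T}\to\mathcal{B}$ is exactly the restriction-to-$\mathbf{V}$ map.

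For the map $\mathcal{B}\to\mathcal{T}$, I would read off $(f,h,\theta)$ from $(\overline{F}_1,\overline{F}_2,\overline{\Phi})$ via the polar presentations $\overline{F}_1=r\sqrt{2f(t)}$ and $\overline{F}_2=r\sqrt{2h(\theta)}$. Since $\overline{\Phi}$ is positively $1$-homogeneous (Lemma \ref{lemma-4}) it carries rays to rays, hence induces an angular diffeomorphism $t\mapsto\theta(t)$ and has the polar form $(r,t)\mapsto(\tfrac{rf(t)^{1/2}}{h(\theta(t))^{1/2}},\theta(t))$. That $f,h$ are $D_{2d}$-invariant positive smooth functions satisfying (\ref{ODIE-1-main-thm-4})--(\ref{ODIE-2-main-thm-4}) is exactly the equivalence (2)$\Leftrightarrow$(3) of Theorem \ref{main-thm-1}; the $D_{2d}$-equivariance, orientation preservation, and the fixing of $\tfrac{\mathbb{Z}\pi}d$ for $\theta$ transcribe from conditions (1)--(2) on $\overline{\Phi}$. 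From the polar form one computes $\overline{\Phi}_*(\overline{T}_1)=\theta'(t)\,\overline{T}_2$, so the isometry hypothesis yields $g^{\overline{F}_1}(\overline{T}_1,\overline{T}_1)=(\theta'(t))^2\,g^{\overline{F}_2}(\overline{T}_2,\overline{T}_2)$, which by (\ref{0007}) of Lemma \ref{lemma-7} is precisely (\ref{ODE-1-main-thm-4}). Finally, the (d)-property for the $k$-th decomposition unwinds — through the computation of $x_2\tfrac{\partial}{\partial x_2}\overline{E}_i$ performed just before Definition \ref{defining-(d)-property} — into exactly (\ref{ODE-2-main-thm-4}) with that value of $k$. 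These are all the defining conditions of $\mathcal{T}$.

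For the converse $\mathcal{T}\to\mathcal{B}$, I would set $\overline{F}_1=r\sqrt{2f(t)}$ and $\overline{F}_2=r\sqrt{2h(\theta)}$, which are $D_{2d}$-invariant Minkowski norms by Theorem \ref{main-thm-1}, and define $\overline{\Phi}$ by the same polar formula; reversing the dictionary above turns (\ref{ODE-2-main-thm-4}) back into the (d)-properties, while equivariance, orientation preservation and ray preservation are immediate from $\theta$. The one genuinely delicate point — and the step I expect to be the main obstacle — is verifying that $\overline{\Phi}$ is a Hessian isometry on all of $\mathbb{R}^2\backslash\{0\}$, including across the focal rays at the angles $\tfrac{k\pi}d$ where the spherical local frame degenerates. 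Away from those rays, $\overline{\Phi}_*(\overline{T}_1)=\theta'(t)\overline{T}_2$ together with (\ref{ODE-1-main-thm-4}) shows that $\overline{\Phi}$ matches the (one-dimensional) Hessian arc length along the indicatrices, so it is a local isometry there; across the rays I would argue the continuous gluing exactly as in the proof of Theorem \ref{main-thm-4} — the normalized map $x\mapsto\overline{\Phi}(x)/|\overline{\Phi}(x)|$ extends continuously because $\theta$ fixes $\tfrac{\mathbb{Z}\pi}d$ — and then invoke Lemma \ref{lemma-4} to promote the resulting isometry of indicatrices to a Hessian isometry. This gluing is lighter than in Theorem \ref{main-thm-4} because in dimension two the indicatrix is a single convex curve and the isometry condition reduces to the single equation (\ref{ODE-1-main-thm-4}).

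It remains to identify the composite $\mathcal{A}\to\mathcal{T}\to\mathcal{B}$ with restriction. Restricting $(F_1,F_2,\Phi)$ to $\mathbf{V}$ produces $(\overline{F}_1,\overline{F}_2,\overline{\Phi})$ whose associated triple in $\mathcal{T}$ is, by construction, the same triple $(f,h,\theta)$ that Theorem \ref{main-thm-4} attaches to $(F_1,F_2,\Phi)$; this was the content of the discussion preceding the theorem. Well-definedness — independence of the chosen normal plane $\mathbf{V}$ and of the identification $\mathbf{V}\cong\mathbb{R}^2$ — follows from the $D_{2d}$-symmetry, as noted there. Injectivity and surjectivity of the restriction map are then inherited from the two bijections with $\mathcal{T}$, completing the proof.
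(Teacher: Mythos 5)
Your proposal is correct and follows essentially the same route as the paper: Section 5.3 proves Theorem \ref{main-thm-5} precisely by factoring both sides through the set of triples $(f(t),h(\theta),\theta(t))$ of Theorem \ref{main-thm-4}, identifying the inequality conditions with Theorem \ref{main-thm-1}, the ODE (\ref{ODE-1-main-thm-4}) with the Hessian isometry condition via $\overline{\Phi}_*(\overline{T}_1)=\theta'(t)\overline{T}_2$ and Lemma \ref{lemma-7}, and the ODEs (\ref{ODE-2-main-thm-4}) with the (d)-properties via the computation of $x_2\tfrac{\partial}{\partial x_2}\overline{E}_1$, with well-definedness from the $D_{2d}$-symmetry. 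The only organizational difference is that you verify the direction $\mathcal{T}\to\mathcal{B}$ directly by a two-dimensional gluing argument, whereas the paper obtains it implicitly as the composite of the construction of $(F_1,F_2,\Phi)$ from $(f,h,\theta)$ (already done for Theorem \ref{main-thm-4}) followed by restriction to $\mathbf{V}$; both are valid.
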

\begin{remark} When $d=1$ or $2$, Theorem \ref{main-thm-5}
only requires $\overline{\Phi}$ to satisfy
the (d)-property for the decomposition $$\mathbb{R}^2=\mathbf{V}'+\mathbf{V}''=\mathbb{R}e_1+\mathbb{R}e_2,$$
i.e., when $d=2$, exchanging $\mathbf{V}'$ and $\mathbf{V}''$ does not count the second. On the other hand, when $d>2$, essentially more (d)-properties for $\overline{\Phi}$ are required by Theorem \ref{main-thm-5}. This phenomenon and its consequence will be discussed in the next two sections.
\end{remark}
\section{Legendre transformation and (d)-property}
\subsection{Legendre transformation}
\label{subsection-6-1}
In this paper, the {\it Legendre transformation} of a Minkowski norm $F$ on $\mathbb{R}^n$ with $n\geq2$ is  referred to the following. By the strong convexity of $F$, we have the following orientation preserving diffeomorphism,
\begin{equation}\label{eq:define-LT}
\Phi:\mathbb{R}^n\backslash\{0\}\rightarrow
\mathbb{R}^n\backslash\{0\},\quad
(x_1,\cdots,x_n)\mapsto(\tfrac{\partial}{\partial x_1}E,\cdots,\tfrac{\partial}{\partial x_n}E),
\end{equation}
where $E=\tfrac12F^2$. Obviously, $\Phi$ is (locally) linear if and only if $F$ is (locally) Euclidean.
The  image $\Phi(S_{F})$ is a strongly convex sphere surrounding the origin, so it determines a Minkowski norm $\hat{F}$ of $F$ with $S_{\hat{F}}=\Phi(S_{F})$. We denote $g$ and $\hat{g}$  the Hessian metrics of $F$ and $\hat{F}$ respectively.
It is crucial to know that, with respect to  the coordinates $x=(x_1,\cdots,x_n)\in\mathbb{R}^n\backslash\{0\}$,
the Hessian matrix
of $\hat{F}$ at $\overline{x}=\Phi(x)\in\mathbb{R}^n\backslash\{0\}$ coincides with the inverse matrix $(g^{ij})$ for the Hessian matrix $(g_{ij})$ of $F$ at $x$ (see Proposition 14.8.1 in \cite{BCS2000} or Lemma 3.1.2 in \cite{Sh2001}). This observation, together with the fact
$\Phi_*({\partial_{x_i}})=\sum_{j}g_{ij}{\partial_{x_j}}$, implies
\begin{eqnarray*}
\hat{g}(\Phi_*(\partial_{x_i}),\Phi_*(\partial_{x_j}))=
\sum_{k,l}g_{ik}g_{jl}g^{kl}=g_{ij}=
g_1(\partial_{x_i},\partial_{x_j}),
\end{eqnarray*}
i.e., $\Phi$ is a Hessian isometry \cite{Sc2013}. On the other hand, $\Phi$ has the following involutive property.
If we denote
$\Phi(x)=\overline{x}=(\overline{x}_1,\cdots,\overline{x}_n)=
(\sum_i g_{i1}x_i,\cdots,\sum_i g_{in}x_i)$ for
$x=(x_1,\cdots,x_n)\in\mathbb{R}^n\backslash\{0\}$, then we see
\begin{eqnarray*}
\Phi^{-1}(\overline{x}_1,\cdots,\overline{x}_n)=(x_1,\cdots,x_n)
=(\sum_i g^{i1}\overline{x}_i,\cdots,\sum_i g^{in}\overline{x}_i)
=(\tfrac{\partial}{\partial\overline{x}_1}\hat{E},\cdots,
\tfrac{\partial}{\partial\overline{x}_n}\hat{E}),
\end{eqnarray*}
where $\hat{E}=\tfrac12\hat{F}^2$, i.e., $\Phi^{-1}$ is the Legendre transformation of $\hat{F}$.

To summarize, we call $\hat{F}$ the {\it dual (Minkowski) norm} of $F$ and $\Phi$ the {\it Legendre transformation} of $F$.

Notice that our notion
in (\ref{eq:define-LT}) has implicitly used the standard Euclidean inner product to identify $\mathbb{R}^n$ with its dual. However, it does not depend
on the choice of orthonormal coordinates. So we have the following easy lemma.

\begin{lemma}\label{lemma-9}
If a linear isometry on $(\mathbb{R}^n,F)$ preserves the standard
inner product, then it commutes with the Legendre transformation of
$F$ and preserves the dual norm $\hat{F}$.
\end{lemma}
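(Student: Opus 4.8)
The plan is to exploit the invariant characterization of the Legendre transformation that is implicit in (\ref{eq:define-LT}): writing $\Phi(x)=(\tfrac{\partial}{\partial x_1}E,\dots,\tfrac{\partial}{\partial x_n}E)$ means precisely that
$$\langle\Phi(x),v\rangle=d_xE(v)\qquad\text{for every }v\in\mathbb{R}^n,$$
where the standard inner product is the identification of $\mathbb{R}^n$ with its dual used in the definition. I would first establish that $\Phi$ commutes with the given linear isometry $A$, and then read off the invariance of the dual norm $\hat F$ almost for free.

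For the commuting step, I start from the hypothesis $F\circ A=F$, which gives $E\circ A=E$. Differentiating this identity at a point $x$ in a direction $w$ yields $d_{Ax}E(Aw)=d_xE(w)$ for all $w$. Substituting $v=Aw$ into the characterization above, I obtain
$$\langle\Phi(Ax),Aw\rangle=d_{Ax}E(Aw)=d_xE(w)=\langle\Phi(x),w\rangle$$
for every $w$. This is where the second hypothesis enters: since $A$ preserves the standard inner product, $\langle Au,Aw\rangle=\langle u,w\rangle$, so putting $u=A^{-1}\Phi(Ax)$ rewrites the left-hand side as $\langle u,w\rangle$. As $w$ is arbitrary I conclude $u=\Phi(x)$, that is $\Phi(Ax)=A\Phi(x)$, which is the asserted commutation $\Phi\circ A=A\circ\Phi$.

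For the dual-norm step, I use that $A$ preserves the indicatrix $S_F$ (immediate from $F\circ A=F$) together with the definition $S_{\hat F}=\Phi(S_F)$. Then
$$A(S_{\hat F})=A(\Phi(S_F))=\Phi(A(S_F))=\Phi(S_F)=S_{\hat F},$$
so the linear map $A$ preserves the indicatrix of $\hat F$; by positive $1$-homogeneity this is equivalent to $\hat F\circ A=\hat F$, i.e., $A$ is a linear isometry of $\hat F$ as well.

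The only delicate point, and the step I would watch most carefully, is keeping the two hypotheses honestly separate. Preservation of $F$ alone produces the identity $d_{Ax}E(Aw)=d_xE(w)$, but this is a statement about the dual pairing, and transporting it back to $\Phi(Ax)=A\Phi(x)$ requires exactly that $A$ be orthogonal for the inner-product identification built into (\ref{eq:define-LT}); dropping the orthogonality hypothesis breaks precisely this passage. Once the inner product is genuinely preserved, the remainder is a one-line manipulation, and no convexity input beyond the mere existence of $\Phi$ is needed.
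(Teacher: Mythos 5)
Your proof is correct and is essentially the argument the paper has in mind: the paper dispenses with this lemma by remarking that the definition (\ref{eq:define-LT}) depends only on the standard inner product and not on the choice of orthonormal coordinates, and your computation with the pairing $\langle\Phi(x),v\rangle=d_xE(v)$ is exactly that observation made explicit. The identification of where orthogonality of $A$ is genuinely needed is a nice touch, but the route is the same.
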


\subsection{Hessian isometries satisfying all (d)-properties}
\label{subsection-6-2}

From Theorem \ref{main-thm-5}, we have seen the importance of
the (d)-property. Indeed, Legendre transformation is one of its origin.

\begin{lemma}\label{lemma-8}
Let $F_1$ and $F_2$ be two Minkowski norm on $\mathbb{R}^n$ with $n\geq2$, and $\Phi$ a Hessian isometry from $F_1$ to $F_2$.
If $\Phi$ is a positive scalar multiplication or the composition between
the Legendre transformation of $F_1$ and a positive scalar multiplication, then
$\Phi$ satisfies the (d)-property for every orthogonal decomposition.
\end{lemma}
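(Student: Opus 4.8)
The plan is to reduce everything to two elementary building blocks together with one closure property. First I would record that the (d)-property of Definition \ref{defining-(d)-property} is preserved under composition, with respect to a fixed orthogonal decomposition $\mathbb{R}^n=\mathbf{V}'+\mathbf{V}''$. Indeed, if $\Phi_1$ is a Hessian isometry from $F_1$ to $F_2$ and $\Phi_2$ one from $F_2$ to $F_3$, both satisfying the (d)-property, then for $x=x'+x''$, $y=\Phi_1(x)=y'+y''$ and $z=\Phi_2(y)=z'+z''$ one simply chains $g^{F_1}_x(x'',x)=g^{F_2}_y(y'',y)=g^{F_3}_z(z'',z)$, which is exactly the (d)-property for $\Phi_2\circ\Phi_1$. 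Hence it suffices to verify the two generators separately and then assemble.

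For the scalar case, if $\Phi(x)=\lambda x$ with $\lambda>0$ is a Hessian isometry from $F_1$ to $F_2$, then $S_{F_2}=\lambda S_{F_1}$ forces $F_2=\lambda^{-1}F_1$, so $E_2=\tfrac12F_2^2=\lambda^{-2}E_1$ and $g^{F_2}_y=\lambda^{-2}g^{F_1}_y$ at every point. Writing $\overline{x}=\lambda x$ and $\overline{x}''=\lambda x''$, and using that the Hessian metric is $0$-homogeneous in its base point (so $g^{F_1}_{\lambda x}=g^{F_1}_x$), the right-hand side $g^{F_2}_{\lambda x}(\lambda x'',\lambda x)=\lambda^{-2}\cdot\lambda^2\,g^{F_1}_x(x'',x)$ collapses to the left-hand side. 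This gives the (d)-property for every decomposition.

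The substantive case is the Legendre transformation $\Phi_0$ of $F_1$, which by Section \ref{subsection-6-1} is a Hessian isometry onto the dual norm $\hat F_1$. Here I would work matricially with the standard inner product $\langle\cdot,\cdot\rangle$. Let $G_x$ denote the symmetric positive definite matrix of $g^{F_1}_x$ in the standard basis and $E_1=\tfrac12F_1^2$. Euler's identity applied to the $1$-homogeneous functions $\partial E_1/\partial x_i$ gives $\overline{x}:=\Phi_0(x)=G_x x$, and the fact recalled in Section \ref{subsection-6-1} (Proposition 14.8.1 of \cite{BCS2000}) says the Hessian matrix of $\hat F_1$ at $\overline{x}$ is $G_x^{-1}$. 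Therefore
\begin{equation*}
g^{F_1}_x(x'',x)=(x'')^{\top}G_x x=\langle x'',\overline{x}\rangle,\qquad
g^{\hat F_1}_{\overline{x}}(\overline{x}'',\overline{x})=(\overline{x}'')^{\top}G_x^{-1}\overline{x}=\langle\overline{x}'',x\rangle,
\end{equation*}
where the second identity uses $G_x^{-1}\overline{x}=x$. Since the decomposition is orthogonal for $\langle\cdot,\cdot\rangle$, the cross terms vanish: $\langle x'',\overline{x}'\rangle=\langle\overline{x}'',x'\rangle=0$. Hence both quantities equal $\langle x'',\overline{x}''\rangle$, and they coincide by symmetry of the inner product, proving the (d)-property of $\Phi_0$ for every orthogonal decomposition.

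Combining the three observations, any $\Phi$ of the stated form is either a scalar multiplication or a composition $\mu\circ\Phi_0$ of maps each enjoying the (d)-property for all orthogonal decompositions, so by composition-closure $\Phi$ does as well. I do not expect a genuine obstacle; the only point demanding care is keeping the standard-inner-product identification consistent, in particular using that \emph{orthogonal} in Definition \ref{defining-(d)-property} refers to $\langle\cdot,\cdot\rangle$. This is precisely what makes the cross terms $\langle x'',\overline{x}'\rangle$ and $\langle\overline{x}'',x'\rangle$ vanish and lets the Legendre computation close.
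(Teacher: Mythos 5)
Your proposal is correct and follows essentially the same route as the paper: verify the (d)-property separately for a positive scalar multiplication (via homogeneity of $E$ and $g$) and for the Legendre transformation (via $\overline{x}=G_xx$ and the fact that the Hessian of the dual norm at $\overline{x}$ is $G_x^{-1}$, which is the paper's coordinate computation written in inner-product form), and then conclude for the composition by the transitivity of the (d)-property, exactly as the paper does by invoking Lemma \ref{lemma-11}. No gaps.
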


\begin{proof}Let $\mathbb{R}^n=\mathbf{V}'+\mathbf{V}''$ be any
orthogonal decomposition. We prove the Hessian isometry in the lemma satisfies the corresponding (d)-property.

Firstly, we prove the case $F_2(cx)=F_1(x)$ and $\Phi(x)=cx$ for some constant $c>0$. With respect to above decomposition, if we have $x=x'+x''\in\mathbb{R}^n\backslash\{0\}$ then $\overline{x}=\Phi(x)=cx=\overline{x}'+\overline{x}''$ satisfies
$\overline{x}'=cx'$ and $\overline{x}''=cx''$. By the observation
\begin{eqnarray*}
g^{F_2}_{\overline{x}}(\overline{x}'',\overline{x})
&=&\tfrac12\tfrac{{\rm d}}{{\rm d}s}(F_2(\overline{x}+s\overline{x}'')^2)|_{s=0}
=\tfrac12\tfrac{{\rm d}}{{\rm d}s}(F_2(cx+csx'')^2)|_{s=0}\\
&=&\tfrac12\tfrac{{\rm d}}{{\rm d}s}(F_1( x+ sx'')^2)|_{s=0}
=g^{F_1}_{x}(x'',x).
\end{eqnarray*}
The (d)-property is proved.

Nextly, we prove the case that $\Phi$ is the Legendre transformation
of $F_1$.
We may choose the orthonormal coordinates $(x_1,\cdots,x_n)$,
such that $\mathbf{V}'$ and $\mathbf{V}''$ are given by $x_{m+1}=\cdots=x_n=0$ and $x_1=\cdots=x_m=0$ respectively. Then for any nonzero
$x=(x_1,\cdots,x_n)=x'+x''$ and $\Phi(x)=\overline{x}=
(\overline{x}_1,\cdots,\overline{x}_n)=\overline{x}'+\overline{x}''$
with $x',\overline{x}'\in\mathbf{V}'$ and $x'',\overline{x}''\in\mathbf{V}''$, we have
\begin{eqnarray*}
g^{F_2}_{\overline{x}}(\overline{x}'',\overline{x})
&=&\sum_{m+1\leq i\leq n, 1\leq j\leq n}\left(\left(\sum_{k}x_k g_{ki}\right)\  g^{ij}\ \left(\sum_{l}x_l g_{jl}\right)\right)\\
&=&\sum_{m+1\leq i\leq n,1\leq p\leq n}x_p g_{pi}x_i=
g^{F_1}_{x}( x'',x),
\end{eqnarray*}
which proves the (d)-property of $\Phi$.

Finally, we prove the case that  $\Phi$ is the composition between the Legendre transformation of $F_1$ and a positive scalar
multiplication.
The argument is a combination of above two, or one may apply Lemma \ref{lemma-11} below.
\end{proof}

Besides the Legendre transformation, identity maps, and their
compositions with positive scalar multiplications, there exists
many other Hessian isometries which satisfy the (d)-property
for every orthogonal decomposition. Here we propose
a construction.

\begin{example}\label{example-1}
Let $C(U_1)$ and $C(U_2)$ be two conic open subsets in $\mathbb{R}^n$ with $n\geq2$ such that their closures only intersect at the origin. We start with the standard Euclidean norm $F_0$ on $\mathbb{R}_n$, and slightly deform it on
$C(U_1)$ and $C(U_2)$ to get the new Minkowski norm $F_1$.
The second new Minkowski norm $F_2$ is constructed by gluing
$F_1$ on $\mathbb{R}^n\backslash C(U_1)$ and the dual norm of $F_1$ on $C(U_1)$. Then there is a Hessian isometry $\Phi$
from $F_1$ to $F_2$, such that $\Phi$ coincides with
the Legendre transformation of $F_1$ on $C(U_1)$ and the identity map elsewhere. This $\Phi$ satisfies the (d)-property for every orthogonal decomposition.
If $F_1$ is locally non-Euclidean on $C(U_1)$ and
$C(U_2)$, then $\Phi$ is not a positive scalar multiplication or the composition between a Legendre transformation and a positive scalar multiplication.
\end{example}

More examples can be
constructed similarly, which may involve more (even infinitely many)
conic open subsets $C(U_i)$ and different scalar changes.

\subsection{Local (d)-property}
\label{subsection-6-3}
For the convenience of later discussion, we also introduce the local version for (d)-property.
\begin{definition}
Let $F_1$ and $F_2$ be two Minkowski norms on $\mathbb{R}^n$
with $n\geq 2$, $C(U_1)$ a connected conic open subset of $\mathbb{R}^n\backslash\{0\}$, and  $\mathbb{R}^n=\mathbf{V}'+\mathbf{V}''$ an orthogonal decomposition. Then a local Hessian isometry $\Phi$ from $F_1$ to $F_2$ is said to satisfy the local (d)-property on $C(U_1)$ for  $\mathbb{R}^n=\mathbf{V}'+\mathbf{V}''$,
if $\Phi$ has definition on $C(U_1)$, and for any $x=x'+x''\in C(U_1)$ and $\Phi(x)=\overline{x}=\overline{x}_1+\overline{x}_2$, with
$x',\overline{x}'\in\mathbf{V}'$ and $x'',\overline{x}''\in\mathbf{V}''$,
we always have
$g^{\overline{F}_1}_{x}(x'',x)=
g^{\overline{F}_2}_{\overline{x}}(\overline{x}'',\overline{x})$ (or equivalently,
$g^{\overline{F}_1}_{x}(x',x)=F_1(x)^2-
g^{\overline{F}_1}_{x}(x'',x)=F_2(\overline{x})^2-
g^{\overline{F}_2}_{\overline{x}}(\overline{x}'',\overline{x})
=g^{\overline{F}_2}_{\overline{x}}(\overline{x}',\overline{x})$).
\end{definition}

The following transitivity lemma for local (d)-property is easy to see.

\begin{lemma}\label{lemma-11}
Let $F_i$ with $1\leq i\leq 3$ be three Minkowski norms on $\mathbb{R}^n$ with $n\geq2$, $C(U_1)$ and $C(U_2)$ two connected conic open subsets of $\mathbb{R}^n\backslash\{0\}$,
$\Phi_1$ is a local Hessian isometry from $F_1$ to $F_2$, which
maps $C(U_1)$ into $C(U_2)$, $\Phi_2$ is a local Hessian isometry from $F_2$ to $F_3$ which has definition on $C(U_2)$. Then with respect to
the same orthogonal decomposition $\mathbb{R}^n=\mathbf{V}'+\mathbf{V}''$, $\Phi=\Phi_2\circ\Phi_1$
satisfies the (d)-property on $C(U_1)$ when each $\Phi_i$  satisfies
the (d)-property on $C(U_i)$.
\end{lemma}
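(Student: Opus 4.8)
The plan is to observe that the local (d)-property is exactly the statement that a certain scalar attached to each point is preserved by the Hessian isometry, so that transitivity reduces to chaining two such preservations through the intermediate image. First I would reinterpret the definition given just above the lemma: for a local Hessian isometry $\Psi$ from a Minkowski norm $F_a$ to $F_b$ defined on a conic open set $C$, satisfying the (d)-property on $C$ with respect to the splitting $\mathbb{R}^n=\mathbf{V}'+\mathbf{V}''$ means precisely that for every $x\in C$, writing $x=x'+x''$ and $\Psi(x)=z=z'+z''$ with $x',z'\in\mathbf{V}'$ and $x'',z''\in\mathbf{V}''$, one has $g^{F_a}_x(x'',x)=g^{F_b}_z(z'',z)$. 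Thus the scalar $g^{F_a}_x(x'',x)$, which reads off the $\mathbf{V}''$-data of $x$ through the fundamental tensor, is an invariant carried along the map $\Psi$.

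Next I would set $y=\Phi_1(x)$ for an arbitrary $x\in C(U_1)$. By hypothesis $\Phi_1$ maps $C(U_1)$ into $C(U_2)$, so $y\in C(U_2)$, which is exactly the set on which $\Phi_2$ is defined and satisfies its (d)-property. Decomposing $y=y'+y''$ along the same fixed splitting $\mathbb{R}^n=\mathbf{V}'+\mathbf{V}''$, the (d)-property of $\Phi_1$ on $C(U_1)$ gives $g^{F_1}_x(x'',x)=g^{F_2}_y(y'',y)$, while the (d)-property of $\Phi_2$ on $C(U_2)$ gives $g^{F_2}_y(y'',y)=g^{F_3}_{\overline{x}}(\overline{x}'',\overline{x})$, where $\overline{x}=\Phi_2(y)=\Phi(x)$. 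Chaining these two equalities yields $g^{F_1}_x(x'',x)=g^{F_3}_{\overline{x}}(\overline{x}'',\overline{x})$ for every $x\in C(U_1)$. Since $\Phi=\Phi_2\circ\Phi_1$ is a composition of local isometries it is itself a local Hessian isometry from $F_1$ to $F_3$ defined on $C(U_1)$, so this is precisely the assertion that $\Phi$ satisfies the (d)-property on $C(U_1)$.

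The argument is essentially formal and I expect no substantive obstacle. The only points requiring care are bookkeeping: all three decompositions of $x$, of $y=\Phi_1(x)$, and of $\overline{x}=\Phi(x)$ must be taken with respect to one and the same orthogonal splitting $\mathbb{R}^n=\mathbf{V}'+\mathbf{V}''$, and the intermediate image $y$ must land inside $C(U_2)$ so that the (d)-property of $\Phi_2$ may legitimately be invoked there. Both are guaranteed by the hypotheses that the decomposition is shared and that $\Phi_1$ maps $C(U_1)$ into $C(U_2)$. One could equally run the whole computation through the equivalent form $g^{F}_x(x',x)=F(x)^2-g^{F}_x(x'',x)$ recorded in the definition, using $g^{F}_x(x,x)=F(x)^2$, but the direct chaining above is the cleanest route.
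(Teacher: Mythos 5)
Your chaining argument is correct and is exactly the intended one: the paper states this lemma with no proof at all, remarking only that the transitivity ``is easy to see,'' and your two-step application of the (d)-property at $x$ and then at $y=\Phi_1(x)$ is the obvious verification being alluded to. The bookkeeping points you flag (a single shared splitting, and $\Phi_1(C(U_1))\subset C(U_2)$) are indeed the only things to check, and both are hypotheses of the lemma.
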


%On the other hand,  Hessian isometries from $F_1$ to $F_2$ which are of the forms
%$\Phi(x_1,\cdots,x_n)=(ax_1,\cdots,ax_m,bx_{m+1},\cdots,bx_n)$
%and $\Phi(x_1,\cdots,x_n)=(a\tfrac{\partial}{\partial x_1}(\tfrac12 F_1^2),\cdots,a\tfrac{\partial}{\partial x_m}(\tfrac12 F_1^2),b\tfrac{\partial}{\partial x_{m+1}}(\tfrac12 F_1^2),\cdots,b\tfrac{\partial}{\partial x_n}(\tfrac12 F_1^2))$
%satisfy Property A everywhere with respect to  $\mathbb{R}^n=\mathbf{V}'+\mathbf{V}''$ where $\mathbf{V}'$ and
%$\mathbf{V}''$ are given by the equations $x_{m+1}=\cdots=x_n=0$ and
%$x_1=\cdots=x_m=0$ respectively for the orthonormal coordinates $x=(x_1,\cdots,x_n)$.

%However, Property A with respect to  two different orthogonal
%decomposition for $\mathbb{R}^2$ simultaneously imposes
%a strong restriction on local Hessian isometries between Minkowski norms on $\mathbb{R}^2$. Notice that the equivalent between (\ref{0029}) and (\ref{0030}) implies switching $\mathbf{V}'$ and
%$\mathbf{V}''$ in $\mathbb{R}^n=\mathbf{V}'+\mathbf{V}''$ does
%not change the orthogonal decomposition for discussing Property A.
%This thought is implied by the following two lemmas, which are useful in the next section.

We will need
the following lemma in Section \ref{subsection-7-2}.

\begin{lemma}\label{lemma-12}
Let $F_1$ and $F_2$ be two Minkowski norms on
$\mathbb{R}^2$, $C(U_1)$ a connected conic open subset contained
in the first quadrant $\{(x_1,x_2)|x_1>0, x_2>0\}$, and $\Phi$ a local Hessian isometry from $F_1$ to $F_2$. Suppose there exist positive constants $a$ and $b$, such that the local Hessian isometry $\Phi$
from $F_1$ to $F_2$ can be presented either as
$$\Phi(x_1,x_2)=(ax_1,bx_2),\quad\forall x=(x_1,x_2)\in C(U_1)$$
or as
$$\Phi(x_1,x_2)=(a\tfrac{\partial}{\partial x_1}E_1,
b\tfrac{\partial}{\partial x_2}E_1),\quad\forall x=(x_1,x_2)\in C(U_1),$$
where $E_1=\tfrac12F_1^2$.
If $\Phi$ satisfies the local (d)-property on $C(U_1)$ for
the decomposition
$\mathbb{R}^2=\mathbf{V}'+\mathbf{V}''$, in which $\mathbf{V}'$ and $\mathbf{V}''$ are spanned by $(\cos c,\sin c)=\cos c\ e_1+\sin c \ e_2$ and $(-\sin c,\cos c)=-\sin c\ e_1+\cos c\ e_2$ respectively, for some $c\notin \mathbb{Z}\pi/2$,
then $\Phi|_{C(U_1)}$ is either a positive scalar multiplication or the
composition between the Legendre transformation of $F_1$ and a
positive scalar multiplication.
\end{lemma}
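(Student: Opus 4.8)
The plan is to reduce the whole statement to the purely linear alternative and to exploit the two–dimensionality through the identity $g^{F}_x(v,x)=\langle\nabla E(x),v\rangle$, where $E=\tfrac12F^2$ and the gradient is the Euclidean one; this follows from Euler's relation applied to the $1$-homogeneous functions $\partial_{x_i}E$. Writing $w=(-\sin c,\cos c)$ for the unit generator of $\mathbf{V}''$, the component of $x$ in $\mathbf{V}''$ is $\langle x,w\rangle w$, so the local (d)-property becomes the single scalar equation
\begin{equation*}
\langle x,w\rangle\,\langle\nabla E_1(x),w\rangle=\langle\overline{x},w\rangle\,\langle\nabla E_2(\overline{x}),w\rangle,\qquad \overline{x}=\Phi(x),
\end{equation*}
required for all $x\in C(U_1)$.

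First I treat the linear alternative $\Phi(x)=(ax_1,bx_2)$ on an \emph{arbitrary} connected conic open set, deliberately avoiding the first-quadrant hypothesis since I will need this generality below. As a local Hessian isometry carries $S_{F_1}$ into $S_{F_2}$, the linear map $\mathrm{diag}(a,b)$ satisfies $F_1=F_2\circ\Phi$, hence $E_1=E_2\circ\Phi$ and $\nabla E_2(\overline{x})=\mathrm{diag}(a^{-1},b^{-1})\nabla E_1(x)$. Substituting this and $\overline{x}=\mathrm{diag}(a,b)x$ into the displayed equation and expanding, the $x_1\partial_{x_1}E_1$ and $x_2\partial_{x_2}E_1$ contributions cancel identically (their coefficients $\sin^2 c$ and $\cos^2 c$ agree on both sides), and after dividing the remaining cross terms by $\sin c\cos c\neq0$ (here $c\notin\mathbb{Z}\pi/2$ enters) one is left with
\begin{equation*}
(\tfrac ab-1)\,x_1\,\partial_{x_2}E_1(x)+(\tfrac ba-1)\,x_2\,\partial_{x_1}E_1(x)=0\quad\text{on the cone.}
\end{equation*}
If $a=b$ this is vacuous and $\Phi$ is a positive scalar multiplication. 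If $a\neq b$, putting $\kappa=b/a>0$ it reads $x_1\partial_{x_2}E_1=\kappa x_2\partial_{x_1}E_1$; writing $E_1=r^2f(t)$ in polar coordinates converts this into the separable ODE $\tfrac{d}{dt}\ln f=\tfrac{d}{dt}\ln(\cos^2 t+\kappa\sin^2 t)$, whose solution $f(t)=c_0(\cos^2 t+\kappa\sin^2 t)$ yields $E_1=c_0(x_1^2+\kappa x_2^2)$. Thus $F_1$ is Euclidean on the cone, its Legendre transformation $L$ is the linear map $\mathrm{diag}(2c_0,2c_0\kappa)$, and since $b/a=\kappa$ we obtain $\Phi=\tfrac{a}{2c_0}\,L$, a positive scalar multiple of $L$. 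This settles the linear alternative in both cases.

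For the second alternative $\Phi(x)=(a\,\partial_{x_1}E_1,b\,\partial_{x_2}E_1)=\mathrm{diag}(a,b)\circ L$, I reduce to the linear one. The map $\Psi:=\Phi\circ L^{-1}$ is a Hessian isometry from $\hat{F}_1$ to $F_2$ that equals the linear map $\mathrm{diag}(a,b)$ on the connected conic open set $L(C(U_1))$. By Section~\ref{subsection-6-1} the inverse $L^{-1}$ is the Legendre transformation of $\hat{F}_1$, so by Lemma~\ref{lemma-8} both $L$ and $L^{-1}$ satisfy the (d)-property for every orthogonal decomposition; combining the hypothesis on $\Phi$ with the (d)-property of $L^{-1}$ through the transitivity Lemma~\ref{lemma-11} shows that $\Psi$ satisfies the local (d)-property on $L(C(U_1))$ for our decomposition. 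Applying the linear case just proved to $\Psi$, either $\Psi$ is a positive scalar multiplication, whence $\Phi=\Psi\circ L$ is a positive scalar multiple of $L$; or $\Psi$ is a positive scalar multiple of the Legendre transformation of $\hat{F}_1$, i.e. of $L^{-1}$, whence $\Phi=\Psi\circ L$ is a positive scalar multiple of $L^{-1}\circ L=\mathrm{Id}$, a positive scalar multiplication. Either way the conclusion holds.

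I anticipate the main obstacle to be the $a\neq b$ branch of the linear case: one must recognize the cross-term identity as a first-order PDE forcing $E_1$ to be a positive definite quadratic form, and then verify that the resulting diagonal map agrees, up to a positive scalar, with the now-linear Legendre transformation — the bookkeeping $b/a=\kappa$ is exactly what makes this matching succeed. A secondary point needing care is that the linear case must be established on \emph{general} cones rather than only in the first quadrant, because in the second alternative it is applied on $L(C(U_1))$, which need not meet the first quadrant; I therefore keep that step free of the first-quadrant assumption and invoke the latter only where the ambient application requires an unambiguous branch of the polar angle.
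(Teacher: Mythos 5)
Your proposal is correct and follows essentially the same route as the paper: the tilted (d)-property reduces (after cancelling the diagonal terms) to the cross-term identity $ax_1\partial_{x_2}E_1=bx_2\partial_{x_1}E_1$, whose polar-coordinate solution forces $E_1$ to be a positive multiple of $ax_1^2+bx_2^2$ when $a\neq b$, and the Legendre alternative is reduced to the diagonal-linear one by composing with a Legendre transformation. The only variation is that you compose on the right with $L^{-1}$ (so you must, and correctly do, establish the linear case on cones outside the first quadrant), whereas the paper composes on the left with the Legendre transformation of $F_2$ and stays in the first quadrant; your signs in the ODE are in fact the correct ones (the paper's displayed coefficient $(b-a)$ should read $(a-b)$, consistent with its stated solution).
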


\begin{proof} We first prove Lemma \ref{lemma-12} when
$\Phi(x_1,x_2)=(ax_1,bx_2)$, $\forall x=(x_1,x_2)\in C(U_1)$.

Notice that this $\Phi$
satisfies the local (d)-property on $C(U_1)$ for $\mathbb{R}^2=\mathbf{V}'+\mathbf{V}''=
\mathbb{R}e_1+\mathbb{R}e_2$. So we have
\begin{eqnarray}
x_2g^{F_1}_x( e_2,x)=\overline{x}_2 g^{F_2}_{\overline{x}} (e_2,\overline{x})\quad\mbox{and}
\quad
x_1g^{F_1}_x( e_1,x)=\overline{x}_1 g^{F_2}_{\overline{x}} (e_1,\overline{x})
\label{0032}
\end{eqnarray}
for any $x=(x_1,x_2)\in C(U_1)$ and
$\Phi(x)=\overline{x}=(\overline{x}_1,\overline{x}_2)$.

On the other hand, the local (d)-property assumed in the lemma implies
\begin{eqnarray}
& &(-x_1\sin c  +x_2\cos c)g_x^{F_1}
(-\sin c\  e_1+\cos c \  e_2,x)\nonumber\\
&=&
(-\overline{x}_1\sin c+\overline{x}_2\cos c)g_{\overline{x}}^{F_2}
(\sin c\  e_1+\cos c \  e_2,
\overline{x})\label{0033}
\end{eqnarray}
for every $$x=(x_1,x_2)=(x_1\cos c+x_2\sin c)\  (\cos c,\sin c)
+(-x_1\sin c+x_2\cos c)\  (-\sin c,\cos c)\in C(U_1)$$ and
$$\overline{x}=\Phi(x)=(\overline{x}_1,\overline{x}_2)=
(\overline{x}_1\cos c+\overline{x}_2\sin c)\  (\cos c,\sin c)
+(-\overline{x}_1\sin c+\overline{x}_2\cos c)\  (-\sin c,\cos c).$$

Plug in (\ref{0032}), $\overline{x}_1=ax_1$ and
$\overline{x}_2=bx_2$ into (\ref{0033}),
and use the fact that $\sin c \cos c\neq0$ because $c\notin\mathbb{Z}\pi/2$, we get
$$a(a-b)x_1g_{x}^{F_1}( e_2,x)=b(a-b)x_2g_{x}^{F_1}( e_1,x).$$
If $a=b$, there is nothing to prove. Otherwise, we get
\begin{equation}\label{0034}
ax_1g_{x}^{F_1}( e_2,x)=bx_2g_{x}^{F_1}( e_1,x)
\end{equation}
for any $x=(x_1,x_2)\in C(U_1)$.

Now we switch to the polar coordinates presentations $x=(x_1,x_2)=(r\cos t,r\sin t)\in C(U_1)$
and $F_1=r\sqrt{2f(t)}$, then (\ref{0034}) is translated to
the ODE
$$(a\cos^2 t+b\sin^2 t)\tfrac{{\rm d}}{{\rm d}t}f(t)+(b-a)\sin 2t f(t)=0,$$
which can be explicitly solved, i.e., for some positive constant $c'$, $f(t)=c'\left|\tfrac{a+b}{a-b}+\cos 2t\right|$. Then we see $F_1|_{C(U_1)}$ coincides with the Euclidean norm $ \sqrt{\tfrac{4c'}{|a-b|}\left(ax_1^2+bx_2^2\right)}$, i.e.,
$\Phi|_{C(U_1)}$ coincides with the composition between the Legendre transformation of $F_1$ and the scalar multiplication by $\tfrac{|a-b|}{4c'}$.

To summarize, we have proved Lemma \ref{lemma-12} when $\Phi(x_1,x_2)=(ax_1,bx_2)$, $\forall x=(x_1,x_2)\in C(U_1)$.
In particular, we see $F_1|_{C(U_1)}$ is a positive constant multiple
of $\sqrt{ax_1^2+bx_2^2}$ when $a\neq b$.

Then we prove Lemma \ref{lemma-12} when
$\Phi(x_1,x_2)=(a\tfrac{\partial}{\partial x_1}E_1,
b\tfrac{\partial}{\partial x_2}E_2)$ for any
$x=(x_1,x_2)\in C(U_1)$.

Denote $C(U_2)=\{(x_1,x_2)| (ax_1,bx_2)\in C(U_1)\}$ another
connected conic open subset in the the first quadrant, $F_3$ the Minkowski norm determined by $F_3(x,y)=F_1(ax,by)$, $\Phi_1(x,y)=(ax,by)$ the linear isometry from $F_3$ to $F_1$,
and $\Phi_2$ the Legendre transformation of $F_2$. The composition
$\Phi\circ\Phi_1$ coincides with the Legendre transformation of $F_3$
on $C(U_2)$. So by the involutive property of Legendre transformation, $\Phi_2\circ\Phi$ maps $(x_1,x_2)$ to $(\tfrac{x_1}{a},\tfrac{x_2}b)$ for any $x=(x_1,x_2)\in C(U_1)$.
By Lemma \ref{lemma-8} and Lemma \ref{lemma-11},
$\Phi_2\circ\Phi$ satisfies the local (d)-property in
Lemma \ref{lemma-12}. As we have proved above, either $a=b$, i.e.,
$\Phi|_{C(U_1)}$ is the composition between the Legendre transformation of $F_1$ and a positive scalar multiplication, or
$F_1|_{C(U_1)}$ is a positive multiple of $\sqrt{\tfrac{ x_1^2}{a}+\tfrac{x_2^2}{b}}$, i.e., $\Phi|_{C(U_1)}$ is
a positive scalar multiplication.
\end{proof}

%-----------------------------------------------------------------

\subsection{Legendre transformation of a Minkowski norm induced by $M_t$}

Now we consider the Legendre transformation $\Phi$ of the Minkowski norm $F_1=r\sqrt{2f(t)}$ induced by the isoparametric
foliation $M_t$ on $(S^{n-1}(1),g^{\mathrm{st}})$.
We prove

\begin{lemma}
\label{lemma-10} Let $F_1=r\sqrt{2f(t)}$ be a Minkowski norm induced by the isoparametric
foliation $M_t$ on $(S^{n-1}(1),g^{\mathrm{st}})$ with $d$ principal curvatures. Then for its dual norm $F_2$ and the Legendre transformation $\Phi$ from $F_1$ to  $F_2$, we have the following:
\begin{enumerate}
\item $F_2$ is also a Minkowski norm induced by $M_t$;
\item $\Phi$ preserves the orientation and fixes the spherical $\xi$-coordinates;
\item Let $\mathbf{V}$ be any normal plane for $M_t$, then the restriction $\overline{\Phi}=\Phi|_{\mathbf{V}}$ coincides with the Legendre transformation of $\overline{F}_1=F_1|_{\mathbf{V}}$.
\end{enumerate}
\end{lemma}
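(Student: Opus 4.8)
The plan is to establish the three claims of Lemma~\ref{lemma-10} by exploiting the $SO(n)$-equivariance of the Legendre transformation (Lemma~\ref{lemma-9}) together with the explicit spherical local frame computations from Lemma~\ref{lemma-2} and Lemma~\ref{lemma-3}. The key structural fact I would lean on is that the Legendre transformation is defined via the standard inner product, so any linear isometry of $(\mathbb{R}^n,F_1)$ that preserves $g^{\mathrm{st}}$ commutes with $\Phi$ and carries the dual norm to itself.

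First I would prove (2), the geometric symmetry, since (1) and (3) will follow from it. The foliation $M_t$ is preserved by a compact group $G\subset SO(n)$ (the cohomogeneity-one group when $M_t$ is homogeneous, or a suitable symmetry group in general) under which $F_1$ is invariant and which acts by $g^{\mathrm{st}}$-isometries. By Lemma~\ref{lemma-9}, $\Phi$ commutes with every such isometry, so $\Phi$ maps each $G$-orbit to a $G$-orbit and hence preserves the foliation $\mathbb{R}_{>0}M_t$; this gives that $\Phi$ fixes the spherical $\xi$-coordinate. Orientation-preservation is immediate from the definition~(\ref{eq:define-LT}) of the Legendre transformation as an orientation-preserving diffeomorphism. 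The cleaner route, which avoids invoking a symmetry group that may not act transitively enough in the inhomogeneous case, is to compute $\Phi_*$ directly in the spherical local frame: since $\{\partial_r,T,X_1,\dots,X_{n-2}\}$ is $g_1$-orthogonal (Lemma~\ref{lemma-7}) and $\Phi_*(\partial_{x_i})=\sum_j g_{ij}\partial_{x_j}$, the image $\Phi_*(X_i)$ is $g_1$-orthogonal to $\Phi_*(\partial_r)$ and $\Phi_*(T)$; tracking which directions stay tangent to $\mathbb{R}_{>0}M_t$ shows $\Phi$ respects the $t$-level sets, hence fixes $\xi$.

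Next I would deduce (3) and (1) together. Because $\Phi$ fixes the $\xi$-coordinate and commutes with the symmetries preserving a normal plane $\mathbf{V}$, it preserves $\mathbf{V}$, so the restriction $\overline{\Phi}=\Phi|_{\mathbf{V}}$ is well-defined. The crucial observation is that the Hessian matrix $(g_{\alpha\beta})$ of $F_1$ is block-diagonal in the frame $\{\partial_r,T,X_1,\dots,X_{n-2}\}$ (Lemma~\ref{lemma-3}), with the $\partial_r$-$T$ block being exactly the Hessian of $\overline{F}_1=F_1|_{\mathbf{V}}$ in polar coordinates (as noted in the proof of Theorem~\ref{main-thm-1}). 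Since the Legendre transformation inverts the Hessian matrix and the inverse of a block-diagonal matrix is block-diagonal, the $\partial_r$-$T$ block of the inverse coincides with the Hessian matrix of $\overline{F}_1$ inverted, i.e.\ with the Hessian of the Legendre transform of $\overline{F}_1$. This yields $\overline{\Phi}=\widehat{\overline{F}_1}$-Legendre, giving (3). For (1), the dual norm $F_2=\widehat{F_1}$ has $S_{F_2}=\Phi(S_{F_1})$; since $\Phi$ preserves the foliation $\mathbb{R}_{>0}M_t$, the indicatrix $S_{F_2}$ is a union of $M_t$-leaves scaled radially, so $F_2$ is constant on each $M_t$ and is therefore a Minkowski norm induced by $M_t$ with presentation $F_2=r\sqrt{2h(\theta)}$ for some $h$.

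\textbf{The main obstacle} I expect is establishing part (2) rigorously in the inhomogeneous OT-FKM case, where no large symmetry group acts transitively on the leaves. There the clean Lie-theoretic shortcut via Lemma~\ref{lemma-9} is unavailable, and I must instead argue directly from the block-diagonal structure of the Hessian: I need to verify that $\Phi_*$ sends the subbundle spanned by $X_1,\dots,X_{n-2}$ (tangent to $M_{r,t}$) into the $g^{\mathrm{st}}$-dual of the same subbundle, so that $\Phi$ genuinely fixes $\xi$ rather than merely permuting $\xi$-coordinates within a leaf. This amounts to checking that the off-diagonal Hessian entries $g(\partial_r,X_i)$, $g(\partial_t,X_i)$, and $g(X_i,X_j)$ for $i\neq j$ all vanish — which is precisely the content of Lemma~\ref{lemma-3} — and then confirming that the inverse Hessian respects the decomposition $\mathbb{R}^n=\mathbb{R}\partial_r\oplus\mathbb{R}\partial_t\oplus\operatorname{span}\{X_i\}$. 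Once this block structure is in hand, all three claims reduce to the $2\times 2$ computation for $\overline{F}_1$ on a normal plane, and the verification becomes routine.
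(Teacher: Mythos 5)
There is a genuine gap at the crux of the argument, namely the step showing that $\Phi$ preserves each normal plane $\mathbf{V}$ (equivalently, that it fixes the spherical $\xi$-coordinates). The paper settles this with a single first-order computation: since $E_1=\tfrac12F_1^2=r^2f(t)$ depends only on $r$ and $t$, we have $X_j\cdot E_1=0$ for all $j$, and along $\mathbf{V}$ the vectors $X_1,\dots,X_{n-2}$ span the orthogonal complement of $\mathbf{V}$; hence $\tfrac{\partial}{\partial x_i}E_1|_{\mathbf{V}\backslash\{0\}}=0$ for $i\geq3$ in adapted orthonormal coordinates, so $\Phi(x)=(\tfrac{\partial}{\partial x_1}E_1,\tfrac{\partial}{\partial x_2}E_1,0,\dots,0)\in\mathbf{V}$ and (3) follows at once (a partial derivative in a direction tangent to $\mathbf{V}$ sees only $\overline{E}_1=E_1|_{\mathbf{V}}$). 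Parts (1) and (2) are then read off from the fact that the restriction to every normal plane is the Legendre transformation of the \emph{same} $D_{2d}$-invariant planar norm. Neither of your two proposed routes delivers this step. The group-theoretic route via Lemma \ref{lemma-9} is unavailable for the inhomogeneous OT-FKM foliations (as you note), and even in the homogeneous case, commuting with $G$ only shows that $\Phi$ maps orbits to orbits and lands in the fixed-point set of each isotropy group; it does not by itself force $\Phi(x)$ onto the normal geodesic through $x$, i.e.\ it does not show $\xi$ is \emph{fixed} rather than moved within a leaf. Your fallback argument analyzes $\Phi_*$ and the $g$-orthogonality of the frame, but knowing how the differential acts does not determine where the points go; "tracking which directions stay tangent to $\mathbb{R}_{>0}M_t$" is not carried out and cannot substitute for the first-order identity $\tfrac{\partial}{\partial x_i}E_1|_{\mathbf{V}}=0$.

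A secondary issue: your block-diagonal Hessian-inversion argument for (3) addresses the wrong object. Inverting the Hessian identifies the fundamental tensor of the dual norm $F_2$ at $\Phi(x)$ (second-order data), but the claim $\overline{\Phi}=\Phi|_{\mathbf{V}}$ equals the Legendre transformation of $\overline{F}_1$ is a statement about the point map, i.e.\ about the gradient of $E_1$ along $\mathbf{V}$ (first-order data). Once $\Phi(\mathbf{V})\subset\mathbf{V}$ is known, (3) is immediate without any matrix inversion; without it, the inversion argument proves nothing about where $\Phi$ sends $\mathbf{V}$. Your logical order (prove (2) first, deduce (3)) also puts the hardest, unproved claim first, whereas the paper's order (3) $\Rightarrow$ (1) $\Rightarrow$ (2) makes everything follow from the one easy computation you are missing.
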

\begin{proof}
With the normal plane $\mathbf{V}$ for $M_t$ arbitrarily chosen,  we can find the
orthonormal coordinates $(x_1,\cdots,x_n)$ for $\mathbb{R}^n$ such that $\mathbf{V}$ is given by $x_3=\cdots=x_n=0$ and $(1,0,\cdots,0)\in M_0$. Around any point $x\in\mathbf{V}\backslash
(\mathbb{R}_{\geq0}M_0\cup\mathbb{R}_{\geq0}M_{\pi/d})$, we
can define a spherical local frame induced by $M_t$.
The values of $X_1,\cdots,X_{n-2}$ span the same tangent subspace
in $T_x(\mathbb{R}^n\backslash\{0\})$ as those of $\partial_{x_3},\cdots,\partial_{x_{n}}$.
By Lemma \ref{lemma-3} and continuity, we have $$\tfrac{\partial}{\partial x_i}E_1|_{\mathbf{V}\backslash\{0\}}=0,\quad \forall 3\leq i\leq n,$$
where $E_1=\tfrac12F_1^2$.
So we have $\Phi(x_1,x_2,0,\cdots,0)=(\tfrac{\partial}{\partial x_1}E_1,
\tfrac{\partial}{\partial x_2}E_1,0,\cdots,0)$, from which we see that $\Phi$
preserves $\mathbf{V}\backslash\{0\}$ and $\overline{\Phi}=\Phi|_{\mathbf{V}\backslash\{0\}}$ is the Legendre transformation for $\overline{F}_1=F_1|_{\mathbf{V}_1}$.

For different normal planes for $M_t$,
$\overline{F}_1$ corresponds to the same
Minkowski norm on $\mathbb{R}^2$, and
then, so its
dual norm $\overline{F}_2=F_2|_{\mathbf{V}_1}$ and its Legendre transformation $\overline{\Phi}=\Phi|_{\mathbf{V}}$ are irrelevant to the choice of $\mathbf{V}$.
This observation implies $F_2$ is a Minkowski norm
induced by $M_t$, and $\Phi$ preserves the foliation.

Since $\overline{F}_1$ is $D_{2d}$-invariant,
its Legendre transformation $\overline{\Phi}$, is $D_{2d}$-equivariant and orientation preserving, and it preserves the ray spanned by
each point in $\mathbf{V}\cap (S^{n-1}(1)\backslash(M_0\cup M_{\pi/d}))$. Let $(r,t)\mapsto (\tfrac{rf(t)^{1/2}}{h(\theta)^{1/2}},\theta(t))$ be
the polar coordinates presentation for $\overline{\Phi}$.
 Then $\theta$ fixes each point in $\tfrac{\mathbb{Z}\pi}d$ and  preserves
the  interval $(\tfrac{k\pi}{d},\tfrac{(k+1)\pi}{d})$ for each $k\in\{0,\cdots,2d-1\}$. Each interval $(\tfrac{k\pi}{d},\tfrac{(k+1)\pi}{d})$ for $t$
determines a conic open subset of $\mathbf{V}\backslash\{0\}$
which corresponds to a distinct spherical $\xi$-coordinate in $\mathbb{R}^n$.
With $\mathbf{V}$ changing arbitrarily, we see from this observation that $\Phi$ fixes the spherical $\xi$-coordinates. Finally, from the spherical coordinates presentation $(r,t,\xi)\mapsto (\tfrac{rf(t)^{1/2}}{h(\theta(t))^{1/2}},\theta(t),\xi)$ for $\Phi$, which
contains the same $\theta(t)$ but restrict $t$ to $(0,\tfrac{\pi}d)$, we see $\Phi$ preserves the orientation.
\end{proof}

Lemma \ref{lemma-10} indicates that the correspondence in
Theorem \ref{main-thm-5} relates Legendre transformations to Legendre transformations. On the other hand, for any
Legendre transformation $\overline{\Phi}$ between the $D_{2d}$-invariant
Minkowski norms $\overline{F}_1$ on $\mathbb{R}^2$ to its dual $\overline{F}_2$, the (d)-property requirement in Theorem \ref{main-thm-5} is met by Lemma \ref{lemma-9} and Lemma \ref{lemma-8}. So $\overline{\Phi}$ is related to a Hessian isometry $\Phi$ from $F_1$ to $F_2$. It must be the same one as in Lemma \ref{lemma-10}, i.e., the
Legendre transformation of $F_1$, because the two coincide when restricted to each normal plane for $M_t$.

To summarize, we have the following one-to-one correspondence between Legendre transformations, which is not affected by the choice for the normal plane $\mathbf{V}$ for $M_t$ or its identification with $\mathbb{R}^2$.

\begin{theorem}\label{main-thm-7}
Let $M_t$ be an isoparametric foliation on $(S^{n-1}(1),g^{\mathrm{st}})$ with $d$ principal curvatures.
Then the restriction to any normal plane $\mathbf{V}$ for $M_t$ and
the identification between $\mathbf{V}$ and $\mathbb{R}^2$ provide a one-to-one correspondence between the set of Legendre transformations of Minkowski norms on $\mathbb{R}^n$ induced by
$M_t$, and the set of Legendre transformations of $D_{2d}$-invariant
Minkowski norms on $\mathbb{R}^2$.
\end{theorem}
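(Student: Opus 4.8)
The plan is to show that the one-to-one correspondence already established in Theorem \ref{main-thm-5}, which is implemented by restriction to a normal plane $\mathbf{V}$ and identification of $\mathbf{V}$ with $\mathbb{R}^2$, carries the subclass of Legendre transformations bijectively onto the subclass of Legendre transformations. Thus the whole argument reduces to checking that ``being a Legendre transformation'' is preserved in both directions of this correspondence, which is exactly the content we must extract from the lemmas of this section. Since the correspondence of Theorem \ref{main-thm-5} is already known to be independent of the choice of $\mathbf{V}$ and of its identification with $\mathbb{R}^2$ (by the $D_{2d}$-symmetry), the same independence will hold for the restricted correspondence, so that point requires no extra work.

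First I would treat the forward direction, from $\mathbb{R}^n$ to $\mathbb{R}^2$. Let $F_1=r\sqrt{2f(t)}$ be a Minkowski norm induced by $M_t$ and let $\Phi$ be its Legendre transformation onto the dual norm $F_2$. Here Lemma \ref{lemma-10} does essentially all the work: its part (1) gives that $F_2$ is again induced by $M_t$, its part (2) gives that $\Phi$ preserves the orientation and fixes the spherical $\xi$-coordinates, and its part (3) gives that the restriction $\overline{\Phi}=\Phi|_{\mathbf{V}}$ is precisely the Legendre transformation of $\overline{F}_1=F_1|_{\mathbf{V}}$. Consequently $(F_1,F_2,\Phi)$ lies in the source class of Theorem \ref{main-thm-5}, and the triple it corresponds to, $(\overline{F}_1,\overline{F}_2,\overline{\Phi})$, has $\overline{\Phi}$ a Legendre transformation of the $D_{2d}$-invariant Minkowski norm $\overline{F}_1$ on $\mathbb{R}^2$. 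Hence the restricted correspondence indeed sends Legendre transformations to Legendre transformations.

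Next I would treat the backward direction, which is where the main care is needed. Start from a $D_{2d}$-invariant Minkowski norm $\overline{F}_1$ on $\mathbb{R}^2$ together with its Legendre transformation $\overline{\Phi}$ onto the dual norm $\overline{F}_2$. I must verify that $(\overline{F}_1,\overline{F}_2,\overline{\Phi})$ meets all the hypotheses of the target class in Theorem \ref{main-thm-5}: that $\overline{F}_2$ is $D_{2d}$-invariant and $\overline{\Phi}$ is $D_{2d}$-equivariant and orientation preserving follows from Lemma \ref{lemma-9}, because every element of $D_{2d}$ is a linear isometry preserving the standard inner product and hence commutes with the Legendre transformation and fixes the dual norm, while orientation preservation is built into the definition of Legendre transformation; and that $\overline{\Phi}$ satisfies the required (d)-property for each of the decompositions listed in part (3) follows from Lemma \ref{lemma-8}, which guarantees the (d)-property of a Legendre transformation for \emph{every} orthogonal decomposition. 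Theorem \ref{main-thm-5} then produces a triple $(F_1,F_2,\Phi)$. The remaining step, which I expect to be the only genuinely delicate point, is to identify this $\Phi$ with the Legendre transformation of $F_1$: by Lemma \ref{lemma-10} the Legendre transformation of $F_1$ is itself a Hessian isometry preserving orientation and fixing the $\xi$-coordinates whose restriction to $\mathbf{V}$ equals the Legendre transformation of $\overline{F}_1$, namely $\overline{\Phi}$; since $\Phi$ restricts to the same $\overline{\Phi}$ on every normal plane, the injectivity half of the correspondence in Theorem \ref{main-thm-5} forces $\Phi$ to coincide with the Legendre transformation of $F_1$. This closes the bijection and completes the proof.
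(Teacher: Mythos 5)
Your proposal is correct and follows essentially the same route as the paper: the forward direction via Lemma \ref{lemma-10}, the backward direction by checking the hypotheses of Theorem \ref{main-thm-5} with Lemma \ref{lemma-9} and Lemma \ref{lemma-8}, and then identifying the resulting $\Phi$ with the Legendre transformation of $F_1$ by uniqueness of the correspondence, since both restrict to the same map on every normal plane. The only cosmetic difference is that you spell out the verification of each hypothesis of Theorem \ref{main-thm-5} more explicitly than the paper does.
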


\begin{remark}\label{remark-6-8}
The function $\theta(t)$ in the polar coordinates presentation
$(r,t)\mapsto (\tfrac{rf(t)^{1/2}}{h(\theta)^{1/2}},\theta(t))$ for
the Legendre transformation
$\overline{\Phi}(x_1,x_2)=(\tfrac{\partial}{\partial x_1}E_1,
\tfrac{\partial}{\partial x_2}E_1)$ for a $D_{2d}$-invariant
$F_1=r\sqrt{2f(t)}$, with $E_1=\tfrac12F_1^2=r^2f(t)$, has been calculated (with more generality) in \cite{XM2020}, i.e.,
\begin{equation}\label{0018}
\theta(t)=
\arccos\left(\tfrac{2\cos t f(t)-\sin t \tfrac{{\rm d}}{{\rm d}t}f(t)}{\left(4f(t)^2+
\left(\tfrac{{\rm d}}{{\rm d}t}f(t)\right)^2\right)^{1/2}}\right),
\end{equation}
when $t\in(0,\pi)$. It is a solution of
\begin{equation}\label{0019}
\tfrac{{\rm d}}{{\rm d}t}\theta(t)=\tfrac{
\left(2f(t)\tfrac{{\rm d}^2}{{\rm d}t^2}f(t)-\left(\tfrac{{\rm d}}{{\rm d}t}f(t)\right)^2+4f(t)^2\right)\sin\theta(t)\cos\theta(t)}{
\left(\cos t \tfrac{{\rm d}}{{\rm d}t}f(t)+2\sin t f(t)\right)\left(-\sin t
\tfrac{{\rm d}}{{\rm d}t}f(t)+2\cos t f(t)\right)},
\end{equation}
for $t\in(0,\tfrac{\pi}{d})$.
The general
solutions of (\ref{0019}) have also been given by \cite{XM2020}, i.e.,
\begin{eqnarray}
\theta(t)=
\arccos\left(\tfrac{2\cos t f(t)-\sin t \tfrac{{\rm d}}{{\rm d}t}f(t)}{\left[4(\cos^2 t+\tfrac{b^2}{a^2}\sin^2 t)f(t)^2
+4(\tfrac{b^2}{a^2}-1)\cos t\sin t f(t)\tfrac{{\rm d}}{{\rm d}t}f(t)+
(\sin^2 t+\tfrac{b^2}{a^2}\cos^2 t)\left(\tfrac{{\rm d}}{{\rm d}t}f(t)\right)^2\right]^{1/2}}\right),\nonumber\\
\label{0020}
\end{eqnarray}
which appears in the polar coordinates presentation for the mapping
$$(x_1,x_2)\mapsto (a\tfrac{\partial}{\partial x_1}E_1,b\tfrac{\partial}{\partial x_1}E_1),$$
with positive constants $a $ and $b $.
\end{remark}

\section{ODE method and local description}

\subsection{ODE method}
In this section, we analysis the ODE system for the triple $(f(t),h(\theta),\theta(t))$ consisting of (\ref{ODE-1-main-thm-4})
 and (\ref{ODE-2-main-thm-4}) in Theorem \ref{main-thm-4}, i.e.,
\begin{eqnarray}
& &\tfrac{1}{2f(t)}\tfrac{{\rm d}^2}{{\rm d}t^2}f(t)-\tfrac{1}{4f(t)^2}\left(\tfrac{{\rm d}}{{\rm d}t}f(t)\right)^2+1\nonumber\\
&=&\left(\tfrac{{\rm d}}{{\rm d}t}\theta(t)\right)^2\
\left(\tfrac{1}{2h(\theta(t))}\tfrac{{\rm d}^2}{{\rm d}\theta^2}h(\theta(t))-\tfrac{1}{4h(\theta(t))^2}\left(\tfrac{{\rm d}}{{\rm d}\theta}f(t)\right)^2+1\right),
\label{0021}\\
& & \sin^2 (t+\tfrac{k\pi}{d})+\tfrac{\cos(t+\tfrac{k\pi}{d})
\sin(t+\tfrac{k\pi}{d})}{2f(t)}\tfrac{{\rm d}}{{\rm d}t}f(t) \nonumber\\
&=&\sin^2(\theta(t)+\tfrac{k\pi}{d})+\tfrac{\cos(\theta(t)+\tfrac{k\pi}{d})
\sin(\theta(t)+\tfrac{k\pi}{d})}{2h(\theta(t))}
\tfrac{{\rm d}}{{\rm d}\theta}h(\theta(t)), \ \forall k\in\{0,\cdots,d-1\},\label{0022}
\end{eqnarray}
where $d$ is number of principal curvatures for $M_t$.
Recall that $(f(t),h(\theta),\theta(t))$ is from the spherical coordinates
presentations for
the triple $(F_1,F_2,\Phi)$ in Theorem \ref{main-thm-4}, and it determines
the triple $(\overline{F}_1,\overline{F}_2,\overline{\Phi})$
in Theorem \ref{main-thm-5}.

While discussing this ODE system, we assume $f(t)$ satisfying the requirements in Theorem \ref{main-thm-1}
has been arbitrarily chosen and discuss how to locally determine
$\theta(t)$ and $h(\theta)$.
To avoid some minor technical complexity, we fix some $t_0\in (0,\tfrac{\pi}{d})$ (so we have $\theta_0=\theta(t_0)\in(0,\tfrac{\pi}{d})$ as well), and discuss the ODE system for $t$ in a sufficiently small neighborhood of $t_0\in(0,\tfrac{\pi}{d})$. We assume $d>2$ because the cases
$d=1$ and $d=2$ have been discussed by Theorem 1.4 and Theorem 1.5 in \cite{XM2020}. The ODE method in \cite{XM2020} for discussing (\ref{0021}) and (\ref{0022}) with $k=0$ is still a main ingredient here. We sketch it as following.

Rewrite (\ref{0022}) with $k=0$ as
\begin{eqnarray}\label{0023}
\tfrac{1}{h(\theta(t))}\tfrac{{\rm d}}{{\rm d}\theta}h(\theta(t))
=\left({2\sin^2 t+\tfrac{\cos t\sin t}{f(t)}\tfrac{{\rm d}}{{\rm d}t}f(t)}\right){\csc\theta(t)\sec\theta(t)}-2\tan\theta(t),
\end{eqnarray}
and differentiate it with respect to  $t$, then we get
\begin{eqnarray}\label{0024}
& &\tfrac{{\rm d}}{{\rm d}t}\theta(t)\
\left(\tfrac{1}{h(\theta(t))}
\tfrac{{\rm d}^2}{{\rm d}\theta^2}h(\theta(t))-
\tfrac{1}{h(\theta(t))^2}\left(
\tfrac{{\rm d}}{{\rm d}\theta}h(\theta(t))
\right)^2
\right)\nonumber\\
&=&\tfrac{{\rm d}}{{\rm d}t}\theta(t)\
{\left(2\sin^2 t+\tfrac{\cos t\sin t}{f(t)}
\tfrac{{\rm d}}{{\rm d}t}f(t)\right)
\left(\sec^2\theta(t)-\csc^2\theta(t)\right)}
-2\tfrac{{\rm d}}{{\rm d}t}\theta(t)\ \sec^2\theta(t)\nonumber\\
& &+\left({4\cos t\sin t+\tfrac{\cos^2t-\sin^2t}{f(t)}\tfrac{{\rm d}}{{\rm d}t}f(t)
-\tfrac{\cos t\sin t}{f(t)^2}\left(\tfrac{{\rm d}}{{\rm d}t}f(t)\right)^2
+\tfrac{\cos t\sin t}{f(t)}\tfrac{{\rm d}^2}{{\rm d}t^2}f(t)
}\right)
{\csc\theta(t)\sec\theta(t)}.\nonumber\\
\end{eqnarray}
We plug (\ref{0023}) and (\ref{0024}) into the right side of (\ref{0021}), to erase $h(\theta(t))$ and its derivatives, then we get a formal quadratic equation for $\tfrac{{\rm d}}{{\rm d}t}\theta(t)$,
\begin{equation}\label{0025}
A\left(\tfrac{{\rm d}}{{\rm d}t}\theta(t)\right)^2+B\left(
\tfrac{{\rm d}}{{\rm d}t}\theta(t)\right)+C=0,
\end{equation}
in which
\begin{eqnarray*}
A&=&\frac{\cos t\sin t\left(\cos t \tfrac{{\rm d}}{{\rm d}t}f(t)+2\sin t f(t)\right)\left(\sin t \tfrac{{\rm d}}{{\rm d}t}f(t)-2\cos t f(t)\right)}{2f(t)^2\cos^2\theta(t)\sin^2\theta(t)},\\
B&=&\frac{\tfrac{\cos t\sin t}{f(t)}\tfrac{{\rm d}^2}{{\rm d}t^2}f(t)-\tfrac{\cos t\sin t}{f(t)^2}\left(\tfrac{{\rm d}}{{\rm d}t}f(t)\right)^2+\tfrac{\cos^2t-\sin^2t}{f(t)}\tfrac{{\rm d}}{{\rm d}t}f(t)
+4\cos t\sin t
}{\cos\theta(t)\sin\theta(t)},\\
C&=&-\tfrac{1}{f(t)}\tfrac{{\rm d}^2}{{\rm d}t^2}f(t)
+\tfrac{1}{2f(t)^2}\left(\tfrac{{\rm d}}{{\rm d}t}f(t)\right)^2-2.
\end{eqnarray*}
Since we have assumed $d>2$, for $t\in(0,\tfrac{\pi}d)$, we have
$\theta(t)\in(0,\tfrac{\pi}d)\subset(0,\tfrac{\pi}2)$. So
from (\ref{0019}) in Remark \ref{remark-6-8}, we see that $\left(\cos t \tfrac{{\rm d}}{{\rm d}t}f(t)+2\sin t f(t)\right)
\left(\sin t \tfrac{{\rm d}}{{\rm d}t}f(t)-2\cos t f(t)\right)$ is a positive factor in $A$ for $t\in (0,\tfrac{\pi}{d})$. To summarize, when $d>2$,
the $A$-coefficient in (\ref{0025}) is always nonzero for each $t\in(0,\tfrac{\pi}{d})$.

Direct calculation shows that for each $t$ close to $t_0$, the two solutions of
(\ref{0025}) are
\begin{equation}\label{0026}
\frac{\cos\theta(t)\sin\theta(t)}{\cos t\sin t}\quad\mbox{and}\quad
\tfrac{\left(-2f(t)\tfrac{{\rm {\rm d}^2}}{{\rm d}t^2}f(t)+\left(\tfrac{{\rm d}}{{\rm d}t}f(t)\right)^2-4f(t)^2\right)\cos\theta(t)\sin\theta(t)}{
\left(\cos t\tfrac{{\rm d}}{{\rm d}t}f(t)+2\sin t f(t)\right)\left(\sin t\tfrac{{\rm d}}{{\rm d}t}f(t)-2\cos t f(t)\right)}.
\end{equation}
The discriminant of (\ref{0025}) is
\begin{equation*}\label{discriminant}
B^2-4AC=\left(\frac{\cos t\sin t \tfrac{{\rm d}^2}{{\rm d}t^2}f(t)+(\sin^2 t-\cos^2t)\tfrac{{\rm d}^2}{{\rm d}t}f(t)}{\cos\theta(t)\sin\theta(t)}\right)^2,
\end{equation*}
which vanishes if and only if
$\cos t\sin t \tfrac{{\rm d}^2}{{\rm d}t^2}f(t)+(\sin^2 t-\cos^2t)\tfrac{{\rm d}^2}{{\rm d}t}f(t)=0$.

\subsection{Case-by-case discussion}
\label{subsection-7-2}
There are two generic possibilities.

{\bf Case 1}. We have $\cos t_0\sin t_0 \tfrac{{\rm d}^2}{{\rm d}t^2}f(t)+(\sin^2 t_0-\cos^2t_0)\tfrac{{\rm d}^2}{{\rm d}t}f(t)\neq0$. Then we have (see Lemma 3.7 in \cite{XM2020})
\begin{lemma}\label{lemma-13}
If $\cos t_0\sin t_0 \tfrac{{\rm d}^2}{{\rm d}t^2}f(t)+(\sin^2 t_0-\cos^2t_0)\tfrac{{\rm d}^2}{{\rm d}t}f(t)\neq0$,
then one of the following
two cases must happen:
\begin{enumerate}
\item For all $t$ sufficiently close to $t_0$, we have
\begin{equation}\label{0027}
\tfrac{{\rm d}}{{\rm d}t}\theta(t)=\tfrac{\cos\theta(t)\sin\theta(t)}{\cos t\sin t};
\end{equation}
\item For all $t$ sufficiently close to $t_0$, we have
\begin{equation}\label{0028}
\tfrac{{\rm d}}{{\rm d}t}\theta(t)=\tfrac{\left(-2f(t)\tfrac{{\rm {\rm d}^2}}{{\rm d}t^2}f(t)+\left(\tfrac{{\rm d}}{{\rm d}t}f(t)\right)^2-4f(t)^2\right)\cos\theta(t)\sin\theta(t)}{
\left(\cos t\tfrac{{\rm d}}{{\rm d}t}f(t)+2\sin t f(t)\right)\left(\sin t\tfrac{{\rm d}}{{\rm d}t}f(t)-2\cos t f(t)\right)}.
\end{equation}
\end{enumerate}
\end{lemma}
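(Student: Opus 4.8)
The plan is to treat the quadratic equation (\ref{0025}) pointwise, regarding $\tfrac{{\rm d}}{{\rm d}t}\theta(t)$ as an unknown for each fixed $t$ near $t_0$, and then to run a connectedness argument showing that the solution cannot switch between the two root branches as $t$ varies. Recall from the construction leading to Lemma \ref{lemma-15} that $\theta(t)$ is smooth with $\tfrac{{\rm d}}{{\rm d}t}\theta(t)>0$. Since $d>2$ forces $t,\theta(t)\in(0,\tfrac{\pi}d)\subset(0,\tfrac{\pi}2)$, we have $\cos t\sin t>0$ and $\cos\theta(t)\sin\theta(t)>0$, and the product $\left(\cos t\,\tfrac{{\rm d}}{{\rm d}t}f(t)+2\sin t\,f(t)\right)\left(\sin t\,\tfrac{{\rm d}}{{\rm d}t}f(t)-2\cos t\,f(t)\right)$ occurring in $A$ is nonzero; hence every denominator in (\ref{0026}) is nonvanishing near $t_0$. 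As the leading coefficient $A$ in (\ref{0025}) is nonzero for $d>2$, the value $\tfrac{{\rm d}}{{\rm d}t}\theta(t)$ must, for each such $t$, equal one of the two roots in (\ref{0026}); call them $\rho_1(t)$ and $\rho_2(t)$, corresponding to (\ref{0027}) and (\ref{0028}) respectively. Both $\rho_1$ and $\rho_2$ are continuous in $t$ on a neighbourhood of $t_0$, since $f$, its derivatives and $\theta$ are continuous there and the denominators do not vanish.

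Next I would use the explicit discriminant. The computation recorded after (\ref{0026}) gives
$$B^2-4AC=\left(\frac{\cos t\sin t\,\tfrac{{\rm d}^2}{{\rm d}t^2}f(t)+(\sin^2 t-\cos^2 t)\tfrac{{\rm d}}{{\rm d}t}f(t)}{\cos\theta(t)\sin\theta(t)}\right)^2.$$
By the hypothesis of the lemma the numerator of this square does not vanish at $t_0$, and $\cos\theta(t_0)\sin\theta(t_0)\neq0$, so $B^2-4AC>0$ at $t_0$. By continuity it stays strictly positive on some open interval $I\ni t_0$, whence $\rho_1(t)\neq\rho_2(t)$ for every $t\in I$: the two continuous branches remain separated throughout $I$.

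Finally I would conclude by a separation argument. Put $I_1=\{t\in I:\tfrac{{\rm d}}{{\rm d}t}\theta(t)=\rho_1(t)\}$ and $I_2=\{t\in I:\tfrac{{\rm d}}{{\rm d}t}\theta(t)=\rho_2(t)\}$. Each set is closed in $I$, being the zero locus of a continuous function; they are disjoint because $\rho_1\neq\rho_2$ on $I$, and their union is $I$ because $\tfrac{{\rm d}}{{\rm d}t}\theta(t)$ solves (\ref{0025}). As $I$ is connected, one of $I_1,I_2$ must be empty, which gives exactly the claimed dichotomy between (\ref{0027}) and (\ref{0028}).

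The genuinely delicate point is not the topology but keeping the two branches apart: a priori $\tfrac{{\rm d}}{{\rm d}t}\theta(t)$ could pass continuously from one root to the other at a parameter where the discriminant degenerates, and the hypothesis is precisely the nonvanishing of that discriminant at $t_0$, which excludes such a transition on a neighbourhood. The remaining work — substituting (\ref{0023}) and (\ref{0024}) into (\ref{0021}), deriving (\ref{0025}), and factoring out the roots (\ref{0026}) together with the perfect-square discriminant — is the routine algebra already carried out in \cite{XM2020} (Lemma 3.7), which I would cite rather than reproduce.
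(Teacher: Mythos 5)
Your argument is correct and is essentially the paper's own: the paper derives the quadratic (\ref{0025}) with $A\neq0$, exhibits the two roots (\ref{0026}) and the perfect-square discriminant, and then invokes Lemma 3.7 of \cite{XM2020}, whose content is precisely your branch-separation step (the discriminant is nonzero at $t_0$, hence on a neighbourhood, so the two continuous roots stay apart and the connected interval cannot switch between them). Nothing is missing.
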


Each of (\ref{0027}) and (\ref{0028}) has a unique solution for the
initial value problem $\theta_0=\theta(t_0)\in(0,\tfrac{\pi}d)$. Then input this solution $\theta(t)$ into (\ref{0022}) with $k=0$, and use
the initial value condition $h(\theta_0)=h_0>0$,
we can locally determine $h(\theta)$ around $\theta_0$, as well as the Hessian isometry
$\overline{\Phi}:(r,t)\mapsto (\tfrac{rf(t)^{1/2}}{h(\theta(t))^{1/2}},\theta(t))$ from $\overline{F}_1=r\sqrt{2f(t)}$ to
 $\overline{F}_2=r\sqrt{2h(\theta)}$ on $\mathbb{R}^2$.

{\bf Subcase 1.1}. (\ref{0027}) is satisfied for all $t$ sufficiently close to $t_0$.

The ODE (\ref{0027}) has the solution
$$\theta(t)=\arccos\left(\tfrac{a\cos t}{(a^2\cos^2 t+b^2\sin^2 t)^{1/2}}\right)$$
with suitable positive constants $a$ and $b$ to meet all the initial value
requirements. In this case, $\overline{\Phi}$ coincides with the linear isomorphism $(x_1,x_2)\mapsto (ax_1,bx_2)$ when polar the $t$-coordinate is close to $t_0$.

Here comes the speciality of $d>2$. By Theorem \ref{main-thm-5},
$\overline{\Phi}$ satisfies the (d)-property for the orthonormal decomposition
$\mathbb{R}^2=\mathbf{V}'+\mathbf{V}''=
\mathbb{R}(\cos(-\tfrac{\pi}{d}),\sin(-\tfrac{\pi}{d}))+
\mathbb{R}(\cos(\tfrac{\pi}2-\tfrac{\pi}d),
\sin(\tfrac{\pi}2-\tfrac{\pi}d))$.
So by Lemma \ref{lemma-12}, $\overline{\Phi}$ locally coincides with
a positive scalar multiplication or the composition between the Legendre
transformation of $\overline{F}_1$ and a positive scalar multiplication.

{\bf Subcase 1.2}. (\ref{0028}) is satisfied for all $t$ sufficiently close to $t_0$.

The solution $\theta(t)$ for (\ref{0028}) is provided in (\ref{0020}),
in which the positive parameters $a$ and $b$ can be suitably chosen
to meet all initial value requirements. The corresponding $\overline{\Phi}$ is given by
$\overline{\Phi}(x_1,x_2)= (a\tfrac{\partial}{\partial x_1}E_1,b\tfrac{\partial}{\partial x_1}E_1)$ with $E_1=\tfrac12F_1^2$ when the polar
$t$-coordinate of $x=(x_1,x_2)$ is sufficiently close to $t_0$. Using Theorem \ref{main-thm-5} and Lemma \ref{lemma-12} for the speciality of $d>2$
again, we see that
$\overline{\Phi}$ locally coincides with
a positive scalar multiplication or the composition between the Legendre
transformation of $\overline{F}_1$ and a positive scalar multiplication.

{\bf Case 2}. We have $\cos t \sin t  \tfrac{{\rm d}^2}{{\rm d}t^2}f(t)+(\sin^2 t -\cos^2t )\tfrac{{\rm d}^2}{{\rm d}t}f(t)=0$ for every $t$ sufficiently close to $t_0$. In this case, the two ODEs in
Lemma \ref{lemma-13} are the same. So $\tfrac{{\rm d}}{{\rm d}t}\theta(t)=\tfrac{\cos\theta(t)\sin\theta(t)}{\cos t\sin t}$ is satisfied
for all $t$ sufficiently close to $t_0$. By argument similar to that for Subcase 1.1, we see $\overline{\Phi}$ locally coincides with
a positive scalar multiplication or the composition between the Legendre
transformation of $\overline{F}_1$ and a positive scalar multiplication, when
restricted to the conic open subset with polar $t$-coordinates sufficiently close to $t_0$.

%{\bf Case 2}. We have. In this case, $f(t)$ can be solved
%locally around $t_0$, i.e., $f(t)=a+b\cos 2t$.
%\begin{lemma}If $\cos t \sin t  \tfrac{{\rm d}^2}{{\rm d}t^2}f(t)+(\sin^2 t -\cos^2t )\tfrac{{\rm d}^2}{{\rm d}t}f(t)\equiv0$ around $t_0$, then for $t$ sufficiently close to $t_0$, $f(t)=a+b\cos 2t$
%for some constants $a$ and $b$, and $\theta
%\end{lemma}
%\subsection{Local description}

\subsection{Conclusion}
We can translate the case-by-case discussion in Section
 \ref{subsection-7-2} to the following theorem, which provides the local description
for a Hessian isometry which preserves the orientation and fixes the spherical $\xi$-coordinates, between two Minkowski norms induced by
 the same isoparametric foliation $M_t$ on $(S^{n-1}(1),g^{\mathrm{st}})$ with $d>2$ principal curvatures.

\begin{theorem} \label{main-thm-6}
Let $M_t$ be an isoparametric foliation on
$(S^{n-1}(1),g^{\mathrm{st}})$ with $d>2$ principal curvature
values. Then for any Hessian isometry $\Phi$ between two Minkowski norms $F_1$ and $F_2$ on $\mathbb{R}^n$ induced by $M_t$, there is a conic open dense
subset $C(U)$ in $\mathbb{R}^n\backslash\{0\}$, where $U$ is the union of some hypersurfaces in the foliation $M_t$,
such that when restricted to each connected component of $C(U)$,  $\Phi$ coincides either with
a positive scalar multiplication, or with the composition between the Legendre transformation of $F_1$ and a positive scalar multiplication. In particular, with respect to  any orthogonal decomposition $\mathbb{R}^n=\mathbf{V}'+\mathbf{V}''$,
 $\Phi$ satisfies the (d)-property, i.e.,
for any nonzero $x=x'+x''$ and $\Phi(x)=\overline{x}=\overline{x}'+\overline{x}''$,
with $x',\overline{x}'\in\mathbf{V}'$ and $x'',\overline{x}''\in\mathbf{V}''$,
we have $g_x^{F_1}(x'',x)=g_{\overline{x}}^{F_2}(\overline{x}'',\overline{x}),
$.
\end{theorem}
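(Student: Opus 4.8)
The plan is to transport the triple $(F_1,F_2,\Phi)$ to the two-dimensional picture, run the ODE analysis of Section \ref{subsection-7-2} on an open dense set of leaves, and then exploit the extra (d)-property that the hypothesis $d>2$ forces, so as to upgrade, through Lemma \ref{lemma-12}, each local normal form of $\overline{\Phi}$ into a positive scalar multiplication or a positive scalar multiple of a Legendre transformation; a density argument then yields the global (d)-property. First I would fix a normal plane $\mathbf{V}$, identify it with $\mathbb{R}^2$, and apply Theorem \ref{main-thm-5}, which produces a $D_{2d}$-symmetric triple $(\overline{F}_1,\overline{F}_2,\overline{\Phi})$ for which $\overline{\Phi}$ satisfies the (d)-property for every decomposition indexed by $k\in\{0,\dots,d-1\}$. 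Because $d>2$, the index $k=1$ is genuinely available, and its decomposition has angle $c=-\tfrac{\pi}{d}$; since $\tfrac{\pi}{d}\in(0,\tfrac{\pi}{2})$ we have $c\notin\mathbb{Z}\pi/2$, which is exactly the nondegeneracy hypothesis needed to invoke Lemma \ref{lemma-12}.

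Next I would isolate the open dense set. Writing $P(t)=\cos t\sin t\,\tfrac{{\rm d}^2}{{\rm d}t^2}f(t)+(\sin^2t-\cos^2t)\tfrac{{\rm d}}{{\rm d}t}f(t)$, the discriminant of the quadratic (\ref{0025}) equals $\bigl(P(t)/(\cos\theta(t)\sin\theta(t))\bigr)^2$, so it vanishes exactly where $P(t)=0$. I set $A=\{P\neq0\}\cup\operatorname{int}\{P=0\}\subset(0,\tfrac{\pi}{d})$, whose complement is the nowhere-dense boundary $\partial\{P=0\}$; then $U=\bigcup_{t\in A}M_t$ gives a conic open dense subset $C(U)$ consisting only of regular leaves. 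Each connected component of $C(U)$ projects onto a connected interval $I\subset A$, which therefore lies entirely in $\{P\neq0\}$ (Case 1) or entirely in $\operatorname{int}\{P=0\}$ (Case 2). On a Case 1 interval the two roots (\ref{0026}) are distinct and $\tfrac{{\rm d}}{{\rm d}t}\theta(t)$ is continuous, so by Lemma \ref{lemma-13} the chosen branch is locally constant, hence constant on $I$: throughout $I$ either (\ref{0027}) holds, giving $\overline{\Phi}(x_1,x_2)=(ax_1,bx_2)$, or (\ref{0028}) holds, giving $\overline{\Phi}(x_1,x_2)=(a\partial_{x_1}E_1,b\partial_{x_2}E_1)$; on a Case 2 interval (\ref{0027}) holds throughout and yields the first form. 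Since $I\subset(0,\tfrac{\pi}{d})\subset(0,\tfrac{\pi}{2})$, the corresponding conic piece of $\mathbf{V}$ lies in the first quadrant, so $\overline{\Phi}$ there is of one of the two shapes required by Lemma \ref{lemma-12}.

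Then I would apply Lemma \ref{lemma-12} with the $k=1$ decomposition to conclude that on this conic piece $\overline{\Phi}$ coincides either with a positive scalar multiplication or with the composition of the Legendre transformation of $\overline{F}_1$ and a positive scalar multiplication. To return to $\mathbb{R}^n$ I would use that $\Phi$ fixes the spherical $\xi$-coordinates and is determined by its restrictions to the normal planes, all of which carry the same triple $(\overline{F}_1,\overline{F}_2,\overline{\Phi})$ by $D_{2d}$-symmetry. A scalar form $\overline{\Phi}(x)=cx$ forces $\theta(t)=t$ and hence the spherical presentation $(r,t,\xi)\mapsto(cr,t,\xi)$, i.e.\ scalar multiplication on $\mathbb{R}^n$; a Legendre form is, by Lemma \ref{lemma-10}, the restriction of the composition of the Legendre transformation of $F_1$ with the same scalar. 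Thus on each connected component of $C(U)$ the map $\Phi$ coincides with one of the two asserted maps, which is the stated local description.

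Finally, for the ``in particular'' clause I would argue by density. Lemma \ref{lemma-8} shows that a positive scalar multiplication and the composition of a Legendre transformation with a positive scalar both satisfy the (d)-property for every orthogonal decomposition, so the pointwise identity $g_x^{F_1}(x'',x)=g_{\overline{x}}^{F_2}(\overline{x}'',\overline{x})$ holds throughout $C(U)$; since $\Phi$, $g_1$, $g_2$ and the projections onto $\mathbf{V}'$ and $\mathbf{V}''$ are smooth and $C(U)$ is dense, this identity persists on all of $\mathbb{R}^n\backslash\{0\}$. I expect the main obstacle to be organizational rather than computational: one must check that the branch of (\ref{0025}) stays constant along each connected component and that $A$ is genuinely open dense, so that the local normal forms glue consistently. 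The decisive structural input is that $d>2$ supplies the $k=1$ (d)-property with $c=-\tfrac{\pi}{d}\notin\mathbb{Z}\pi/2$, without which Lemma \ref{lemma-12} could not be applied.
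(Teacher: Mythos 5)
Your proposal is correct and follows essentially the same route as the paper: transport $(F_1,F_2,\Phi)$ to the $D_{2d}$-symmetric planar triple via Theorem \ref{main-thm-5}, run the quadratic-ODE dichotomy of Section \ref{subsection-7-2} on the open dense set where the discriminant behaves generically, invoke the extra $k=1$ (d)-property (available exactly because $d>2$ gives $c=-\tfrac{\pi}{d}\notin\mathbb{Z}\pi/2$) through Lemma \ref{lemma-12}, lift back to $\mathbb{R}^n$ via a local version of Lemma \ref{lemma-10}, and obtain the global (d)-property from Lemma \ref{lemma-8} by density and continuity. Your connectedness argument showing that the chosen root of (\ref{0025}) is constant on each component of $\{P\neq 0\}$ is a detail the paper leaves implicit, but it does not change the approach.
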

\begin{proof}
Let $(\overline{F}_1,\overline{F}_2,\overline{\Phi})$
be the triple corresponding to $(F_1,F_2,\Phi)$ given by
Theorem \ref{main-thm-5}. The above case by case discussion
based on Theorem \ref{main-thm-4}, Theorem \ref{main-thm-5} and
Lemma \ref{lemma-12}
indicates the existence of an open dense subset $\coprod_{i=1}^\infty(c_i,d_i)\subset (0,\tfrac{\pi}d)$, such that when
$\overline{\Phi}$ is restricted
to each conic open subset $C(U'_i)$ in $\mathbb{R}^2\backslash\{0\}$ determined by the polar coordiantes condition $t\in(c_i,d_i)$, it is either a positive scalar
multiplication or the composition between a Legendre transformation
and a positive scalar multiplication.

Let $C(U_i)$ be the conic open subset
in $\mathbb{R}^n\backslash\{0\}$ determined by the spherical coordinates condition
$t\in(a_i,b_i)$. Then $C(U)=\coprod_{i=1}^\infty C(U_i)$ is a conic dense
open subset in $\mathbb{R}^n\backslash\{0\}$, and each $C(U_i)$ is
a connected component of $C(U)$. When $\overline{\Phi}|_{C(U'_i)}$ is a positive scalar multiplication, obviously
so does $\Phi|_{C(U_i)}$. When $\overline{\Phi}|_{C(U'_i)}$ is the composition between a Legendre transformation and a positive scalar multiplication, so does $\Phi|_{C(U_i)}$ by a local analog of Lemma \ref{lemma-10}.

By Lemma \ref{lemma-8}, we know $\Phi$ satisfies the local (d)-property on each $C(U_i)$ for every orthogonal decomposition of $\mathbb{R}^n$.
Since $C(U)=\coprod_{i=1}^\infty C(U_i)$ is dense in $\mathbb{R}^n\backslash\{0\}$,
by continuity, $\Phi$ satisfies the (d)-property for
 every orthogonal decomposition of $\mathbb{R}^n$.
\end{proof}

\begin{remark}\label{remark-7-3}
The Hessian isometry $\Phi$ in Theorem \ref{main-thm-5}, between two Minkowski norms induced by $M_t$, which preserves the orientation and fixes the spherical $\xi$-coordinates, can be constructed as following. Firstly, we use the similar technique as for Example \ref{example-1} to $D_{2d}$-equivariantly glue positive scalar multiplications and the compositions between Legendre transformations and positive scalar multiplications to construct the triple
$(\overline{F}_1,\overline{F}_2,\overline{\Phi})$, which meets all requirements in Theorem \ref{main-thm-5}. In particular, (d)-properties are satisfied by Lemma \ref{lemma-8}. Then Theorem \ref{main-thm-5} provides the corresponding $(F_1,F_2,\Phi)$, in which $\Phi$ is the wanted Hessian isometry. Finally, the argument for Theorem \ref{main-thm-6} tells us that essentially this is the only construction when we have $d>2$ for the isoparametric foliation $M_t$ on
$(S^{n-1}(1),g^{\mathrm{st}})$.
\end{remark}

Finally, we remark that the local case-by-case discussion in Section 7.2 provide the following description for $\theta(t)$ in Theorem \ref{main-thm-4} when $d>2$.

\begin{theorem} Let $M_t$ be an isoparametric foliation on $(S^{n-1}(1),g^{\mathrm{st}})$ with $d>2$ principal curvatures.
Then for any triple $(f(t),h(\theta),\theta(t))$ in the spherical coordinates presentation in Theorem \ref{main-thm-4}, there exists
an open dense subset $ \coprod_{i=1}^{\infty}(c_i,d_i)$ of $(0,\tfrac{\pi}d)$, such that when restricted to each $(c_i,d_i)$, we
have
$$\mbox{either}\quad\theta(t)\equiv t\quad\mbox{or}\quad
\theta(t)\equiv\arccos\left(\tfrac{
2\cos t f(t)-\sin t\tfrac{{\rm d}}{{\rm d}t}f(t)}{
\left(4f(t)^2+\left(\tfrac{{\rm d}}{{\rm d}t}f(t)\right)^2\right)^{1/2}}\right).$$
\end{theorem}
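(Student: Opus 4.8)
The plan is to derive this statement as a direct readout of the local analysis in Section \ref{subsection-7-2} and Theorem \ref{main-thm-6}, translating the dichotomy on $\overline{\Phi}$ into a dichotomy on $\theta(t)$. The key preliminary observation is purely about angular coordinates: a positive scalar multiplication $(r,t)\mapsto(cr,t)$ fixes the polar $t$-coordinate, while the Legendre transformation of $\overline{F}_1=r\sqrt{2f(t)}$ has polar presentation $(r,t)\mapsto(\ast,\theta(t))$ with $\theta(t)$ given by (\ref{0018}) in Remark \ref{remark-6-8}; since post-composing with a positive scalar multiplication only rescales $r$, the composition of the Legendre transformation with a positive scalar multiplication has the same angular function $\theta(t)$ as in (\ref{0018}). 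Thus ``scalar multiplication'' corresponds to $\theta(t)\equiv t$ and ``Legendre transformation composed with a positive scalar multiplication'' corresponds to the $\arccos$-expression in the statement.

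With this dictionary in hand, I would invoke the construction already carried out in the proof of Theorem \ref{main-thm-6}. Setting $G(t)=\cos t\sin t\,\tfrac{{\rm d}^2}{{\rm d}t^2}f(t)+(\sin^2 t-\cos^2 t)\tfrac{{\rm d}}{{\rm d}t}f(t)$ (the numerator of the discriminant of (\ref{0025})), the set $\{t\in(0,\tfrac{\pi}d):G(t)\neq0\}$ is open and falls under Case 1, while the interior of $\{t:G(t)=0\}$ falls under Case 2; on each, the combination of Lemma \ref{lemma-13}, Theorem \ref{main-thm-5} and Lemma \ref{lemma-12} (here the hypothesis $d>2$ is essential, as it furnishes the extra (d)-property for the decomposition with $k=1$) forces $\overline{\Phi}$, on the corresponding conic region, to be a positive scalar multiplication or a Legendre transformation composed with a positive scalar multiplication. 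The union of these two open sets is open, and its complement lies in $\partial\{G=0\}$, which is nowhere dense; hence the union is open and dense in $(0,\tfrac{\pi}d)$, and I write it as the disjoint union $\coprod_{i=1}^{\infty}(c_i,d_i)$ of its connected components.

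Then I would apply the dictionary of the first paragraph on each component. On $(c_i,d_i)$ the analysis shows $\overline{\Phi}$ is uniformly one of the two model maps throughout the associated conic region; since the polar presentation $(r,t)\mapsto(\tfrac{rf(t)^{1/2}}{h(\theta(t))^{1/2}},\theta(t))$ of the fixed triple determines $\theta(t)$ unambiguously, the model map being a scalar multiplication forces $\theta(t)\equiv t$ on $(c_i,d_i)$, while the model map being a Legendre transformation composed with a scalar multiplication forces $\theta(t)$ to equal (\ref{0018}) there. This is exactly the asserted alternative.

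The step I expect to be the main obstacle is ensuring that the alternative is genuinely constant on each connected component rather than a point-by-point patchwork of the two branches in (\ref{0026}): Lemma \ref{lemma-13} and Lemma \ref{lemma-12} are local around a generic $t_0$, so I must argue that the model map determined near $t_0$ persists across the whole component. This is precisely the content of the uniform open-dense decomposition produced in Theorem \ref{main-thm-6}, whose components are by construction maximal intervals on which a single model map holds; the uniqueness of solutions of the first-order initial-value problems (\ref{0027}) and (\ref{0028}) prevents the two solution branches from being joined within one component. The remaining verifications---chiefly that the Legendre angular function is indeed (\ref{0018}) and that scalar multiplication preserves $\theta$---are the routine checks from the first paragraph.
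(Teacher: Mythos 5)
Your proposal is correct and follows essentially the same route as the paper, which states this theorem as a direct readout of the Section 7.2 case-by-case analysis (Lemma \ref{lemma-13}, Theorem \ref{main-thm-5}, Lemma \ref{lemma-12}) together with the dictionary that a positive scalar multiplication fixes the polar $t$-coordinate while a Legendre transformation composed with a positive scalar multiplication has angular part (\ref{0018}). Your extra care about the alternative being constant on each connected component (via the decomposition from Theorem \ref{main-thm-6} and uniqueness for the initial-value problems (\ref{0027})--(\ref{0028})) only makes explicit what the paper leaves implicit.
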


\noindent
{\bf Acknowledgement}. The author sincerely thank V. Matveev
for precious discussion which inspired this work and supplied the most crucial techniques. He would also thank Zizhou Tang, Jianquan Ge and Wenjiao Yan
for their helpful suggestions. He thanks the reviewers for their precious advices. This paper is
supported by Beijing Natural Science Foundation (No. Z180004),
NSFC (No. 11771331, No. 11821101), Capacity Building for Sci-Tech
Innovation -- Fundamental Scientific Research Funds (No. KM201910028021)

\end{document}